\DeclareMathOperator{\Coker}{\textup{Coker}}
\DeclareMathOperator{\GL}{\textup{GL}}
\DeclareMathOperator{\Spec}{\textup{Spec}}
\DeclareMathOperator{\Tor}{\textup{Tor}}
\DeclareMathOperator{\et}{\textup{et}}
\DeclareMathOperator{\pr}{\textup{pr}}
\theoremstyle{plain}
\newtheorem{thm}{Theorem}[section]\setcounter{thm}{0}
\newtheorem*{thm*}{Theorem}
\newtheorem{lem}[thm]{Lemma}
\newtheorem{cor}[thm]{Corollary}
\newtheorem{prop}[thm]{Proposition}
\theoremstyle{remark}
\newtheorem{rmk}[thm]{Remark}
\newtheorem*{rmk*}{Remark}
\theoremstyle{definition}
\newtheorem{defn}[thm]{Definition} 
\newtheorem*{const*}{Construction}
\theoremstyle{plain}
\newtheorem{thmI}{Theorem}
\newtheorem{thmII}{Theorem}
\newcounter{NC}
\def \Frob {\mathrm{\bf Frob}}
\def \gr {{\mathrm{gr}}}
\def \EFin {{\mathrm{EFin}}}
\def \EFinet {{\mathrm{EFin}_{\textup{ét}}}}
\def \univ {{\rm univ}}
\begin{document}
\title[étale vs stratified]{The étale fundamental group and $F$-divided sheaves in characteristic $p>0$}
\author{Xiaotao Sun, Lei Zhang}

\address{ 
Xiaotao SUN\\
     Tianjin University\\
    School of Mathematics\\    
    Tianjin\\ China }
\email{xiaotaosun@tju.edu.cn}

 \address{Lei ZHANG\\
    Sun Yat-Sen University\\
    School of Mathematics (Zhuhai)\\    
    Zhuhai, 
    Guangdong Province\\ China}
\email{cumt559@gmail.com} 

\date{\today}

\global\long\def\A{\mathbb{A}}

\global\long\def\Ab{(\textup{Ab})}

\global\long\def\C{\mathbb{C}}

\global\long\def\Cat{(\textup{Cat})}

\global\long\def\Di#1{\textup{D}(#1)}

\global\long\def\E{\mathbb{E}}

\global\long\def\F{\mathbb{F}}

\global\long\def\GCov{G\textup{-Cov}}

\global\long\def\Gcat{(\textup{Galois cat})}

\global\long\def\Gfsets#1{#1\textup{-fsets}}

\global\long\def\Gm{\mathbb{G}_{m}}

\global\long\def\GrCov#1{\textup{D}(#1)\textup{-Cov}}

\global\long\def\Grp{(\textup{Grps})}

\global\long\def\Gsets#1{(#1\textup{-sets})}

\global\long\def\HCov{H\textup{-Cov}}

\global\long\def\MCov{\textup{D}(M)\textup{-Cov}}

\global\long\def\MHilb{M\textup{-Hilb}}

\global\long\def\N{\mathbb{N}}

\global\long\def\PGor{\textup{PGor}}

\global\long\def\PGrp{(\textup{Profinite Grp})}

\global\long\def\PP{\mathbb{P}}

\global\long\def\Pj{\mathbb{P}}

\global\long\def\Q{\mathbb{Q}}

\global\long\def\RCov#1{#1\textup{-Cov}}

\global\long\def\RR{\mathbb{R}}

\global\long\def\Sch{\textup{Sch}}

\global\long\def\WW{\textup{W}}

\global\long\def\Z{\mathbb{Z}}

\global\long\def\acts{\curvearrowright}

\global\long\def\alA{\mathscr{A}}

\global\long\def\alB{\mathscr{B}}

\global\long\def\arr{\longrightarrow}

\global\long\def\arrdi#1{\xlongrightarrow{#1}}

\global\long\def\catC{\mathscr{C}}

\global\long\def\catD{\mathscr{D}}

\global\long\def\catF{\mathscr{F}}

\global\long\def\catG{\mathscr{G}}

\global\long\def\comma{,\ }

\global\long\def\covU{\mathcal{U}}

\global\long\def\covV{\mathcal{V}}

\global\long\def\covW{\mathcal{W}}

\global\long\def\duale#1{{#1}^{\vee}}

\global\long\def\fasc#1{\widetilde{#1}}

\global\long\def\fsets{(\textup{f-sets})}

\global\long\def\iL{r\mathscr{L}}

\global\long\def\id{\textup{id}}

\global\long\def\la{\langle}

\global\long\def\odi#1{\mathcal{O}_{#1}}

\global\long\def\ra{\rangle}

\global\long\def\set{(\textup{Sets})}

\global\long\def\sets{(\textup{Sets})}

\global\long\def\shA{\mathcal{A}}

\global\long\def\shB{\mathcal{B}}

\global\long\def\shC{\mathcal{C}}

\global\long\def\shD{\mathcal{D}}

\global\long\def\shE{\mathcal{E}}

\global\long\def\shF{\mathcal{F}}

\global\long\def\shG{\mathcal{G}}

\global\long\def\shH{\mathcal{H}}

\global\long\def\shI{\mathcal{I}}

\global\long\def\shJ{\mathcal{J}}

\global\long\def\shK{\mathcal{K}}

\global\long\def\shL{\mathcal{L}}

\global\long\def\shM{\mathcal{M}}

\global\long\def\shN{\mathcal{N}}

\global\long\def\shO{\mathcal{O}}

\global\long\def\shP{\mathcal{P}}

\global\long\def\shQ{\mathcal{Q}}

\global\long\def\shR{\mathcal{R}}

\global\long\def\shS{\mathcal{S}}

\global\long\def\shT{\mathcal{T}}

\global\long\def\shU{\mathcal{U}}

\global\long\def\shV{\mathcal{V}}

\global\long\def\shW{\mathcal{W}}

\global\long\def\shX{\mathcal{X}}

\global\long\def\shY{\mathcal{Y}}

\global\long\def\shZ{\mathcal{Z}}

\global\long\def\st{\ | \ }

\global\long\def\stA{\mathcal{A}}

\global\long\def\stB{\mathcal{B}}

\global\long\def\stC{\mathcal{C}}

\global\long\def\stD{\mathcal{D}}

\global\long\def\stE{\mathcal{E}}

\global\long\def\stF{\mathcal{F}}

\global\long\def\stG{\mathcal{G}}

\global\long\def\stH{\mathcal{H}}

\global\long\def\stI{\mathcal{I}}

\global\long\def\stJ{\mathcal{J}}

\global\long\def\stK{\mathcal{K}}

\global\long\def\stL{\mathcal{L}}

\global\long\def\stM{\mathcal{M}}

\global\long\def\stN{\mathcal{N}}

\global\long\def\stO{\mathcal{O}}

\global\long\def\stP{\mathcal{P}}

\global\long\def\stQ{\mathcal{Q}}

\global\long\def\stR{\mathcal{R}}

\global\long\def\stS{\mathcal{S}}

\global\long\def\stT{\mathcal{T}}

\global\long\def\stU{\mathcal{U}}

\global\long\def\stV{\mathcal{V}}

\global\long\def\stW{\mathcal{W}}

\global\long\def\stX{\mathcal{X}}

\global\long\def\stY{\mathcal{Y}}

\global\long\def\stZ{\mathcal{Z}}

\global\long\def\then{\ \Longrightarrow\ }

\global\long\def\L{\textup{L}}

\global\long\def\l{\textup{l}}

\newcommand{\lp}{{\textup{(}}}
\newcommand{\rp}{{\textup{)}}}
\newcommand{\lb}{{\textup{[}}}
\newcommand{\rb}{{\textup{]}}}

\newcommand{\B}{{\mathbb B}}
\newcommand{\D}{{\mathbb D}}
\newcommand{\G}{{\mathbb G}}
\renewcommand{\H}{{\mathbb H}}
\newcommand{\I}{{\mathbb I}}
\newcommand{\J}{{\mathbb J}}
\newcommand{\M}{{\mathbb M}}
\renewcommand{\P}{{\mathbb P}}
\newcommand{\R}{{\mathbb R}}
\newcommand{\T}{{\mathbb T}}
\newcommand{\U}{{\mathbb U}}
\newcommand{\V}{{\mathbb V}}
\newcommand{\W}{{\mathbb W}}
\newcommand{\X}{{\mathbb X}}
\newcommand{\Y}{{\mathbb Y}}

\newcommand{\sA}{{\mathcal A}}
\newcommand{\sB}{{\mathcal B}}
\newcommand{\sC}{{\mathcal C}}
\newcommand{\sD}{{\mathcal D}}
\newcommand{\sE}{{\mathcal E}}
\newcommand{\sF}{{\mathcal F}}
\newcommand{\sG}{{\mathcal G}}
\newcommand{\sH}{{\mathcal H}}
\newcommand{\sI}{{\mathcal I}}
\newcommand{\sJ}{{\mathcal J}}
\newcommand{\sK}{{\mathcal K}}
\newcommand{\sL}{{\mathcal L}}
\newcommand{\sM}{{\mathcal M}}
\newcommand{\sN}{{\mathcal N}}
\newcommand{\sO}{{\mathcal O}}
\newcommand{\sP}{{\mathcal P}}
\newcommand{\sQ}{{\mathcal Q}}
\newcommand{\sR}{{\mathcal R}}
\newcommand{\sS}{{\mathcal S}}
\newcommand{\sT}{{\mathcal T}}
\newcommand{\sU}{{\mathcal U}}
\newcommand{\sV}{{\mathcal V}}
\newcommand{\sW}{{\mathcal W}}
\newcommand{\sX}{{\mathcal X}}
\newcommand{\sY}{{\mathcal Y}}
\newcommand{\sZ}{{\mathcal Z}}


\newcommand{\Aff}{{\rm Aff}}
\newcommand{\Aut}{{\rm Aut}}
\newcommand{\an}{{\rm an}}
\newcommand{\alg}{{\rm alg}}
\newcommand{\Bd}{{\rm Band}}
\newcommand{\Cats}{{\rm Cats}}
\newcommand{\ch}{\textup{Ch}}
\newcommand{\Char}{{\rm char}}
\newcommand{\codim}{{\rm codim}}
\newcommand{\cont}{{\rm cont}}
\newcommand{\Cov}{\textup{Cov}}
\newcommand{\Crys}{{\rm Crys}}
\newcommand{\cts}{\textup{cts}}
\newcommand{\Div}{{\rm Div}}
\newcommand{\Dmod}{{\rm Dmod}}
\newcommand{\ed}{{\rm ed}}
\newcommand{\Ess}{{\rm EFin}}
\renewcommand{\et}{\textup{\'et}}
\newcommand{\ev}{\textup{ev}}
\newcommand{\Fdiv}{{\rm Fdiv}}
\newcommand{\FEt}{{\textup{FÉt}}}
\newcommand{\Fib}{{\rm Fib}}
\newcommand{\FSets}{{\rm FSets}}
\newcommand{\Frac}{{\rm Frac}}
\newcommand{\FtAff}{{\rm FtAff}}
\newcommand{\Gal}{{\rm Gal}}
\newcommand{\height}{\textup{ht}}
\newcommand{\HOM}{{{\shH}\rm om}}
\newcommand{\Hom}{{\rm Hom}}
\newcommand{\iinf}{\textup{inf}}
\newcommand{\im}{{\rm Im}}
\newcommand{\Ker}{{\rm Ker}}
\newcommand{\LL}{\textup{L}}
\newcommand{\Loc}{{\rm Loc}}
\newcommand{\Max}{{\rm Max \ }}
\newcommand{\MIC}{\mbox{MIC}}
\newcommand{\Min}{{\rm Min \ }}
\newcommand{\NN}{\textup{N}}
\newcommand{\Mod}{\text{\sf Mod}}
\newcommand{\Noohi}{\textup{Noohi}}
\newcommand{\perf}{\textup{perf}}
\newcommand{\pet}{{\textup{proét}}}
\newcommand{\Pic}{{\rm Pic}}
\newcommand{\Rep}{\text{\sf Rep}}
\newcommand{\Res}{{\rm Res}}
\newcommand{\rank}{{\rm rank}}
\newcommand{\red}{{\rm red}}
\newcommand{\Spf}{\textup{Spf}}
\newcommand{\spe}{\textup{sp}}
\newcommand{\str}{\textup{str}}
\newcommand{\strat}{{\rm Str}}
\newcommand{\sym}{\text{Sym}}
\newcommand{\tp}{{\rm top}}
\newcommand{\Tr}{{\rm Tr}}
\newcommand{\trace}{{\rm Tr}}
\newcommand{\vect}{\text{\sf vect}}
\newcommand{\Vect}{\text{\sf Vect}}




\setcounter{section}{0}
\maketitle

\begin{abstract}
    We investigate how the étale fundamental group controls local systems in characteristic $p$, namely $F$-divided sheaves. In analogy with Grothendieck--Malcev’s results for discrete groups, we show that if a morphism $f \colon Y \to X$ of smooth projective varieties over $k=\bar{k}$ induces a surjection on the étale fundamental groups, then the pullback functor $\Fdiv(X)\to \Fdiv(Y)$ is fully faithful. If $f$ is surjective and the induced map is an isomorphism, then the functor is an equivalence. These results extend the theorem of Esnault--Mehta \cite[Theorem 1.1]{EM10} on the triviality of $F$-divided sheaves over simply connected varieties.
\end{abstract}


\section*{Introduction}

Let $X$ be a connected schemes of finite type over $\C$. By the Riemann
existence theorem, the étale fundamental group $\pi_1^\et(X,x)$ is the profinite
completion of the topological fundamental group $\pi_1^\tp(X^\an,x^\an)$.
Now, let $f\colon Y\to X$ be a map of connected schemes of finite
type over $\C$. If $f$ induces an isomorphism  $\pi_1^\tp(Y^\an,y^\an)\to
\pi_1^\tp(X^\an,x^\an)$, then it also induces an isomorphism on the étale fundamental groups
$\pi_1^\et(Y,y)\to \pi_1^\et(X,x)$. In fact, a stronger result holds: Let
$\Rep_{\C}(G)$ denote the category of finite-dimensional complex
representations of a group $G$. If the natural functor 
\begin{equation}\label{Ziel in complex case}
    \Rep_{\C}(\pi_1^\tp(X^\an,x^\an))\to\Rep_{\C}(\pi_1^\tp(Y^\an,y^\an)) \tag{$\star$}
\end{equation}is  fully faithful (resp. an equivalence), then the induced map
$\pi_1^\et(Y,y)\to \pi_1^\et(X,x)$ is surjective (resp. an isomorphism).
The converse is also true, though more subtle.

\begin{cor}[Grothendieck, Malcev] Let $f\colon X\to Y$ be a map of connected schemes of finite type
    over $\C$. 
    \begin{enumerate}[label={\rm (\arabic*)}]
        \item If $\pi_1^\et(X)\to\pi_1^\et(Y)$ is surjective, then \eqref{Ziel
    in complex case} is fully faithful.
        \item If $\pi_1^\et(X)\to\pi_1^\et(Y)$ is an isomorphism, then \eqref{Ziel
    in complex case} is an equivalence.
    \end{enumerate}
\end{cor}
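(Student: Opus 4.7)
The plan is to reduce the statement to a purely group-theoretic theorem of Grothendieck, combined with Malcev's residual finiteness theorem, via the Riemann existence theorem. Set $G = \pi_1^\tp(X^\an, x^\an)$ and $H = \pi_1^\tp(Y^\an, y^\an)$; both are finitely generated since the underlying analytic spaces admit finite triangulations. Let $\phi \colon H \to G$ be the map induced by $f$. By the Riemann existence theorem, the profinite completion $\hat\phi \colon \hat H \to \hat G$ is identified with the induced map on étale fundamental groups, so the hypothesis of (1) (resp.~(2)) becomes: $\hat\phi$ is surjective (resp.~an isomorphism).

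I would then invoke the following theorem of Grothendieck (1970): for any morphism $\phi \colon H \to G$ of finitely generated groups with $\hat\phi$ surjective (resp.~an isomorphism), the restriction functor $\Rep_\C(G) \to \Rep_\C(H)$ is fully faithful (resp.~an equivalence). Applied to our $\phi$, this gives both parts of the corollary directly.

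For completeness let me sketch the core of the argument for fully faithfulness. Via $\Hom(V,W) \cong V^{\vee} \otimes W$ one reduces to showing $V^G = V^{\phi(H)}$ for every $V \in \Rep_\C(G)$. Let $\rho \colon G \to GL(V)$ be the representation, $A \subseteq GL(V)$ the Zariski closure of $\rho(G)$, and $A' \subseteq A$ the Zariski closure of $\rho(\phi(H))$; then $V^G = V^A$ and $V^{\phi(H)} = V^{A'}$, so it suffices to prove $A = A'$. Surjectivity of $\hat\phi$ applied to the finite quotient $\rho(G) \to A/A^0$ (which factors through $\hat G$ because $\rho(G)$ is residually finite by Malcev) gives $A = A' \cdot A^0$. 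Passing to suitable finite-index subgroups of $G$ and $H$ reduces to the case when $A$ is connected, where Malcev's theorem once more forces $A = A'$.

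The essential surjectivity required for part~(2) will be the main obstacle: an arbitrary $H$-representation must descend to a $G$-representation. This is the substantive part of Grothendieck's theorem and requires more than Malcev alone; the argument proceeds by forming the algebraic hull of $\rho(H)$ in $GL(W)$ and transporting the structure to $G$ via the isomorphism $\hat\phi$ on profinite completions, using that finite quotients of $\hat H$ and $\hat G$ correspond tautologically.
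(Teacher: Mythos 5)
Your proposal is correct and takes essentially the same route as the paper: the paper derives the corollary directly from the stated Grothendieck--Malcev theorem on finitely generated discrete groups, using the Riemann existence theorem to identify the étale fundamental group with the profinite completion of the topological one, and the finite generation of $\pi_1^{\tp}$ for finite-type $\C$-schemes --- exactly your reduction. Your additional sketch of the group-theoretic core (Zariski closures plus Malcev's residual finiteness) goes beyond what the paper records, which simply cites \cite{Gro70} and \cite{Mal40}, but is consistent with those sources.
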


This result is a direct consequence of a more general result due to Grothendieck
\cite{Gro70} and Malcev \cite{Mal40}:

\begin{thm}[Grothendieck, Malcev] Let $f\colon \Gamma_1\to\Gamma_2$ be a group homomorphism between two
    discrete
    groups. For a commutative ring $A$ and a discrete group $G$, denote by
    $\Rep_A(G)$ the category of finitely presented $A$-modules equipped with a
    $G$-action.
    \begin{enumerate}[label={\rm (\arabic*)}]
        \item If the projective completion $\hat{f}\colon\hat{\Gamma}_1\to\hat{\Gamma}_2$
            is surjective and $\Gamma_2$ is finitely generated, then the
            functor  
            \(\Rep_{A} (\Gamma_1)\to\Rep_{A} (\Gamma_2)\) is fully faithful for
            every $A$.
        \item If  $\hat{f}\colon\hat{\Gamma}_1\to\hat{\Gamma}_2$
            is an isomorphism and both $\Gamma_1,\Gamma_2$ are finitely
            generated, then the
            functor  
            \(\Rep_{A} (\Gamma_1)\to\Rep_{A} (\Gamma_2)\) is an equivalence for
            every $A$.
    \end{enumerate}
\end{thm}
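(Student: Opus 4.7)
My plan is to prove both parts via a single core technique---reducing modulo finite-index ideals of $A$ and invoking Krull's intersection theorem---with part~(2) additionally relying on Mal'cev's theorem that finitely generated linear groups are residually finite.

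For part~(1), full faithfulness of the pullback $f^{*}\colon\Rep_A(\Gamma_2)\to\Rep_A(\Gamma_1)$ reformulates as the assertion that, for every $U,V\in\Rep_A(\Gamma_2)$ and every $A$-linear $\phi\colon U\to V$ commuting with the $f(\Gamma_1)$-action, the defect $z_{\gamma,u}:=\phi(\gamma u)-\gamma\phi(u)\in V$ vanishes for all $\gamma\in\Gamma_2$ and $u\in U$. Since $\Gamma_2$ is finitely generated and $U,V$ are finitely presented, only finitely many elements of $A$ appear in the presentations, in the matrices describing the $\Gamma_2$-action on chosen generators, and in the coordinates of $\phi$. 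I would therefore replace $A$ by the finitely generated $\mathbb{Z}$-subalgebra $A_0\subseteq A$ they generate---Noetherian, Jacobson, with finite residue fields at every maximal ideal---and $U,V$ by the $\Gamma_2$-stable $A_0$-submodules $U_0,V_0$ they span; by $A$-linearity it suffices to show that the restriction $\phi_0:=\phi|_{U_0}\colon U_0\to V_0$ is $\Gamma_2$-equivariant.

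For any ideal $I\subseteq A_0$ of finite index, $U_0/IU_0$ and $V_0/IV_0$ are finite, so the $\Gamma_2$-action on their sum factors through a finite quotient $\Gamma_2/N_I$. The surjectivity of $\hat f$ is equivalent to $f(\Gamma_1)\cdot N_I=\Gamma_2$ for each such $N_I$, so any $\gamma\in\Gamma_2$ decomposes as $\gamma=f(\gamma_1)\cdot n$ with $n\in N_I$; combining $A_0$-linearity of $\phi_0$, its $f(\Gamma_1)$-equivariance, and the triviality of $n$ on the quotients $U_0/IU_0$ and $V_0/IV_0$ quickly gives $z_{\gamma,u}\in IV_0$. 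Since this holds for every such $I$---in particular for every power $\mathfrak{m}^{r}$ of any maximal ideal $\mathfrak{m}\subseteq A_0$, because $A_0/\mathfrak{m}^{r}$ is finite---we obtain $z_{\gamma,u}\in\bigcap_{r}\mathfrak{m}^{r}V_0$; localizing at $\mathfrak{m}$ and applying Krull's intersection theorem in the Noetherian local ring $(A_0)_{\mathfrak{m}}$ yields $z_{\gamma,u}=0$ in $(V_0)_{\mathfrak{m}}$, and finite generation of $V_0$ over $A_0$ then forces $z_{\gamma,u}=0$ globally.

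For part~(2), full faithfulness is furnished by~(1), so the substantive remaining content is essential surjectivity: given $V\in\Rep_A(\Gamma_1)$, construct $W\in\Rep_A(\Gamma_2)$ with $f^{*}W\cong V$. After the same reduction to an $A_0$-lattice $V_0\subseteq V$, the image $\rho(\Gamma_1)\subseteq\mathrm{Aut}_{A_0}(V_0)$ is a finitely generated linear group over a commutative Noetherian ring, hence residually finite by Mal'cev; thus $\rho$ extends continuously to $\hat\rho\colon\hat\Gamma_1\to\widehat{\rho(\Gamma_1)}$, and transporting along the given isomorphism $\hat\Gamma_1\cong\hat\Gamma_2$ and restricting to $\Gamma_2\hookrightarrow\hat\Gamma_2$ yields a homomorphism $\sigma\colon\Gamma_2\to\widehat{\rho(\Gamma_1)}$. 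The hard part---and where I expect the main technical obstacle---is the descent step: one must show that the image of $\sigma$ in fact lies inside $\rho(\Gamma_1)$ itself, so that $V_0$ (and hence $V$) genuinely inherits a $\Gamma_2$-action whose restriction along $f$ recovers $\rho$. I would establish this by compatibly transporting, for each finite-index ideal $I\subseteq A_0$, the finite $\Gamma_1$-action on $V_0/IV_0$ to a $\Gamma_2$-action via the induced isomorphism of finite quotients, and reapplying the mod-$I$/Krull argument of part~(1) to confirm that these finite actions integrate to an honest $\Gamma_2$-action on the discrete $V_0$ rather than only on its completion $\varprojlim_{I}V_0/IV_0$.
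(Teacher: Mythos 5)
First, a point of reference: the paper does not actually prove this theorem --- it is quoted as background and attributed to Grothendieck [Gro70] and Malcev [Mal40] with no argument supplied --- so your proposal can only be judged on its own merits. Your argument for part (1) is correct and is essentially the standard one: reduce to a finitely generated $\mathbb{Z}$-subalgebra $A_0\subseteq A$ (this is exactly where finite generation of $\Gamma_2$ and finite presentation of $U,V$ are needed), note that $A_0/\mathfrak{m}^r$ is finite for every maximal ideal $\mathfrak{m}$, use the density of $f(\Gamma_1)$ in every finite congruence quotient of $\Gamma_2$ to place the defect $z_{\gamma,u}$ in $\mathfrak{m}^rV_0$ for all $\mathfrak{m}$ and $r$, and conclude by Krull's intersection theorem together with the local-to-global principle for vanishing of an element of a module. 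I see no gap in that half.

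Part (2) is where the real content lies, and the step you yourself flag as ``the hard part'' is a genuine gap that your proposed remedy does not close. Reapplying the mod-$I$/Krull argument proves a \emph{uniqueness} statement --- the maps $V_0\to\varprojlim_I V_0/IV_0$ and $G\to\varprojlim_I G/G_I$ (for $G=\rho(\Gamma_1)$ and $G_I$ the congruence kernels) are injective, so a descended $\Gamma_2$-action is unique if it exists --- but it cannot produce \emph{existence}. What your construction yields for $\gamma\in\Gamma_2$ is a compatible system of classes $\sigma_I(\gamma)\in G/G_I$, each represented by some $\rho(\gamma_1(I))$ with the witness $\gamma_1(I)\in\Gamma_1$ depending on $I$; that is, an element of the congruence completion $\varprojlim_I G/G_I$ acting on $\varprojlim_I V_0/IV_0$. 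A compatible system of elements of the finite quotients $G/G_I$ need not arise from a single element of $G$, just as a compatible system of residues modulo all $n$ need not come from an integer, so there is no reason the resulting automorphism preserves $V_0$ inside its completion. Establishing that it does is precisely the substance of Grothendieck's theorem and needs an input beyond residual finiteness and Krull; one standard route passes through the finite $A_0$-algebra $B\subseteq\mathrm{End}_{A_0}(V_0)$ generated by $\rho(\Gamma_1)\cup\rho(\Gamma_1)^{-1}$ and an analysis of its unit group and congruence completions. As written, your argument only constructs an action of $\Gamma_2$ on the completion of $V_0$, not on $V_0$ itself, and hence does not yet produce an object of $\Rep_A(\Gamma_2)$.
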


In this paper, we primarily study a morphism $f\colon Y\to X$ of smooth, geometrically connected projective varieties over
    a perfect field $k$ of characteristic $p>0$. 
    Fix a $k$-rational point $\xi'$ of $Y$ and let $\xi\coloneqq f(\xi')$ be the
    corresponding rational point of $X$. We use $\pi_1^{\NN,\et}(Y,\xi')$ to
    denote the maximal étale quotient of the Nori fundamental group scheme.
    Recall that if $k$ is separably closed, then $\pi_1^{\NN,\et}(Y,\xi')$ is
    nothing but the étale fundamental group $\pi_1^\et(Y,\xi')$, and
    $\pi_1^{\NN,\et}$ satisfies base change for algebraic separable field extensions.
    \begin{thmI}[cf.~Theorem \ref{full faithfulness the general case}]\label{ThmI} If the induced map $\pi(f)\colon\pi_1^{\NN,\et}(Y,\xi')\to
        \pi_1^{\NN,\et}(X,\xi)$ is surjective, then the pullback functor 
        \[
            \Fdiv(X)\longrightarrow \Fdiv(Y)
        \]
        is fully faithful.
\end{thmI}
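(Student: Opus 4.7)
The approach I would take is Tannakian: the plan is to reduce the theorem to an object-level descent statement about $\Fdiv$-sheaves, and then adapt the strategy of Esnault--Mehta \cite{EM10}. Since $\Fdiv(X)$ and $\Fdiv(Y)$ are neutral Tannakian categories over $k$ with fiber functors at $\xi$ and $\xi'$, let $\pi_1^{\Fdiv}(X,\xi)$ and $\pi_1^{\Fdiv}(Y,\xi')$ denote their Tannakian fundamental groups. The pullback $f^*$ is an exact $k$-linear tensor functor compatible with these fibers, so by the standard Tannakian dictionary $f^*$ is fully faithful if and only if the induced homomorphism $\pi_1^{\Fdiv}(Y,\xi')\to \pi_1^{\Fdiv}(X,\xi)$ is faithfully flat. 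Testing faithful flatness via subobjects and quotients reduces the problem to the following: every $E\in \Fdiv(X)$ with $f^*E\cong \mathcal{O}_Y^{\mathrm{rk}(E)}$ in $\Fdiv(Y)$ is itself isomorphic to $\mathcal{O}_X^{\mathrm{rk}(E)}$ in $\Fdiv(X)$. So I fix such an $E=(E_n,\sigma_n)_{n\geq 0}$ and must show $E$ is trivial.

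First I would pass to the case $k=\bar{k}$ via the stated base-change property of $\pi_1^{\NN,\et}$ for separable algebraic extensions, so that $\pi_1^{\NN,\et}=\pi_1^\et$. The central step is to show that $E$ is essentially finite étale. Adapting \cite{EM10}: since $f$ is surjective, any destabilizing subsheaf of a Frobenius iterate $E_n$ would pull back to a destabilizing subsheaf of the trivial bundle $f^*E_n\cong \mathcal{O}_Y^{\mathrm{rk}(E)}$, which is impossible; hence each $E_n$ is strongly semistable of slope zero with respect to a fixed polarization on $X$. Langer's boundedness theorem then confines $\{E_n\}_{n\geq 0}$ to a bounded family, so a pigeonhole argument yields integers $n<m$ with $E_n\cong E_m$, and hence a Frobenius periodicity $F^{*(m-n)}E_m\cong E_m$. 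By the Lange--Stuhler/Katz correspondence between Frobenius-periodic vector bundles and continuous finite-dimensional $k$-representations of $\pi_1^\et(X,\xi)$, the $\Fdiv$-sheaf $E$ is essentially finite étale and corresponds to some $\rho\colon\pi_1^{\NN,\et}(X,\xi)\to \mathrm{GL}_{\mathrm{rk}(E)}(k)$.

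At this point the hypothesis on $\pi(f)$ finishes the proof: triviality of $f^*E$ in $\Fdiv(Y)$ translates to $\rho\circ\pi(f)$ being the trivial representation of $\pi_1^{\NN,\et}(Y,\xi')$, and since $\pi(f)$ is surjective this forces $\rho$ itself to be trivial, whence $E\cong \mathcal{O}_X^{\mathrm{rk}(E)}$ in $\Fdiv(X)$.

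The hard part is the middle step: showing that an $\Fdiv$-sheaf with trivial pullback to $Y$ is essentially finite étale. Reflecting semistability across the (possibly non-finite) surjection $f$ between higher-dimensional varieties is delicate and requires a careful choice of polarization (for instance via Mehta--Ramanathan restriction to curves where $f$ is nicely controlled), and the boundedness of $\{E_n\}$ rests on Langer's positive-characteristic estimates for strongly semistable sheaves. These are precisely the technical engine of \cite{EM10}, which must be adapted from the hypothesis ``$X$ simply connected'' to the weaker hypothesis ``$f^*E$ trivial on $Y$''. Once this reduction is in place, the final conclusion from surjectivity of $\pi(f)$ is immediate.
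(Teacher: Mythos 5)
Your opening reduction contains a genuine logical gap. Full faithfulness of $f^*\colon\Fdiv(X)\to\Fdiv(Y)$ is \emph{not} equivalent to faithful flatness of $\pi_1^{\Fdiv}(Y,\xi')\to\pi_1^{\Fdiv}(X,\xi)$ (faithful flatness is equivalent to full faithfulness \emph{plus} stability of the essential image under subobjects; full faithfulness alone only says the map is an epimorphism of affine group schemes, e.g.\ a Borel inside a reductive group induces a fully faithful restriction functor without being surjective). More seriously, neither property reduces to your test statement ``$f^*E$ trivial $\Rightarrow E$ trivial.'' That statement only says that the normal closure of the image of the induced homomorphism is everything: already for $\mathbb{Z}/2\hookrightarrow S_3$ the test holds while $\Rep(S_3)\to\Rep(\mathbb{Z}/2)$ fails to be fully faithful (the standard $2$-dimensional representation acquires a new invariant vector upon restriction). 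The point is that by rigidity the correct reduction is to bijectivity of $\Hom(\sO_X,E)\to\Hom(\sO_Y,f^*E)$ for all $E$, i.e.\ a statement about invariants; and invariants under a non-normal subgroup need not form a subrepresentation, so they cannot be detected by your triviality criterion. Two further problems: Theorem I does not assume $f$ surjective (indeed the paper applies it to the inclusion of a closed point to recover Esnault--Mehta), so your semistability argument ``any destabilizing subsheaf pulls back to one of $\sO_Y^{\oplus r}$'' has no hypothesis to stand on; and boundedness of $\{E_n\}$ does not permit a pigeonhole conclusion $E_n\cong E_m$, since bounded families routinely contain infinitely many isomorphism classes (degree-zero line bundles on a curve, say). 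The actual engine needed there is Hrushovski's theorem on the density of Frobenius-periodic points, available only over $\bar{\F}_p$, followed by a spreading-out argument.

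The paper's route is different in kind: it never tries to prove an object-level triviality statement, but works directly with the Hom-spaces. It realizes the pairs $(E_i,E_i')$ as points $(P_i,Q_i)$ of a product of representation spaces, extracts a closed integral subscheme $\sN$ in which these points and the étale-trivializable (Frobenius-periodic) points are both dense (Lemma \ref{Definition of the subscheme N}, resting on \cite{Sun19}), and compares the relative sheaves $\HOM(\sE,\sF)$ and $\HOM((f\times\id)^*\sE,(f\times\id)^*\sF)$ over $\sN$. The surjectivity of $\pi(f)$ on étale fundamental groups makes the comparison map an isomorphism at the dense set of periodic points, hence (by openness of the isomorphism locus, Lemma \ref{local criterion of flatness}) on a dense open, hence at infinitely many $(P_i,Q_i)$ --- which is exactly what is needed to descend a morphism of $F$-divided sheaves. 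You would need to rebuild your argument along these lines, or at least replace your reduction by the correct one about $\Hom(\sO_X,E)\to\Hom(\sO_Y,f^*E)$ and then supply the density/moduli input that your pigeonhole step elides.
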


If the geometric fundamental group
$\pi^\et_1(X_{\bar{k}},\xi)=0$ and $\Spec(l)\to X$ is a closed point, then
applying 
Theorem \ref{ThmI} to the map $\Spec(l)\to X_l$ implies that $\Fdiv(X_l)$ is
trivial. By base change (cf.~\cite[Appendix, Prop.~B3]{TZ2}),
$\Fdiv(X_l)=\Fdiv(X)\otimes_kl$, so $\Fdiv(X)$ is trivial. This
recovers the main result of \cite[Theorem 1.1]{EM10}.


\begin{thmII}[cf.~Theorem \ref{separable for general bundles over general
    fields final version}]\label{ThmII} Suppose that $f$ is surjective. If $f$
    induces an isomorphism $\pi(f)\colon\pi_1^{\NN,\et}(Y,\xi')\to
        \pi_1^{\NN,\et}(X,\xi)$, then the induced map
        \[\pi^{\Fdiv}_1(Y,\xi')\longrightarrow\pi^{\Fdiv}_1(X,\xi)\] is an
        isomorphism as well.
\end{thmII}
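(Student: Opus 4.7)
The plan is to upgrade the fully faithful functor of Theorem \ref{ThmI} to a tensor equivalence; by Tannakian duality, an equivalence $\Fdiv(X)\to\Fdiv(Y)$ corresponds precisely to an isomorphism of the dual affine group schemes $\pi^{\Fdiv}_1(Y,\xi')\to\pi^{\Fdiv}_1(X,\xi)$. Since the hypothesis that $\pi(f)$ is an isomorphism entails surjectivity, Theorem \ref{ThmI} already supplies full faithfulness of $f^*\colon\Fdiv(X)\to\Fdiv(Y)$; this translates into the dual map being faithfully flat and into $f^*\Fdiv(X)$ being stable under subobjects inside $\Fdiv(Y)$. Thus the problem reduces to proving essential surjectivity: every $E\in\Fdiv(Y)$ descends to an F-divided sheaf on $X$.

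The natural route is fpqc descent along $f$. Write $p_1,p_2\colon Y\times_X Y\to Y$ for the two projections and $q_{ij}$ for the three projections of the triple fibre product. One seeks an isomorphism $\phi\colon p_1^*E\xrightarrow{\sim}p_2^*E$ satisfying the cocycle relation $q_{23}^*\phi\circ q_{12}^*\phi=q_{13}^*\phi$; such a $\phi$, combined with fpqc descent, yields an F-divided sheaf on $X$ whose pullback is $E$. Because $f$ is surjective, each $p_i$ is surjective and induces a surjection on $\pi^{\NN,\et}$, so Theorem \ref{ThmI}, applied after passing to a smooth projective alteration of $Y\times_X Y$ when necessary, makes $p_i^*$ fully faithful. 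Both $p_1^*E$ and $p_2^*E$ further restrict to $E$ along the diagonal $\Delta(Y)\hookrightarrow Y\times_X Y$, so any candidate $\phi$ is heavily constrained, and only its \emph{existence} is at issue.

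The main obstacle is precisely this construction of $\phi$. The kernel $K$ of the faithfully flat morphism $\pi^{\Fdiv}_1(Y,\xi')\to\pi^{\Fdiv}_1(X,\xi)$ can a priori contain nontrivial infinitesimal or pro-unipotent components to which the étale hypothesis on $\pi^{\NN,\et}$ is blind, so étale input alone cannot force $K$ to be trivial. The argument must exploit the Frobenius-compatibility intrinsic to an F-divided sheaf: each $E_n$ is recovered from $E_{n+m}$ via iterated Frobenius pullback, and this rigidity should allow one to reduce the construction of the descent datum to a statement about morphisms in an $\Fdiv$-category on a smooth model, where Theorem \ref{ThmI} and the $\pi^{\NN,\et}$-isomorphism hypothesis apply directly. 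A secondary technical nuisance is the potential singularity of $Y\times_X Y$, which forces one to work on a smooth alteration and to verify that the étale-Nori-fundamental-group hypothesis propagates through this replacement; this is also the step in which the separate surjectivity assumption on $f$ (beyond surjectivity of $\pi(f)$) appears to be used in an essential way.
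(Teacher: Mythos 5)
Your reduction of the theorem to essential surjectivity of $f^*\colon\Fdiv(X)\to\Fdiv(Y)$ (full faithfulness being supplied by Theorem \ref{ThmI}) matches the paper, but the descent strategy you propose for essential surjectivity has a genuine gap: the descent datum $\phi\colon p_1^*E\xrightarrow{\sim}p_2^*E$ is never constructed, and you acknowledge as much. This is not a technical loose end; producing a cocycle-compatible $\phi$ is essentially equivalent to the theorem itself, since such a $\phi$ already encodes the sought-after descent of $E$. Full faithfulness of $p_i^*$ --- even granting that $Y\times_XY$ is irreducible, that a smooth alteration can be chosen compatibly with the $\pi_1^{\NN,\et}$-hypothesis, and that the projections induce surjections on $\pi_1^{\NN,\et}$, none of which is automatic --- only controls morphisms between objects already known to be pulled back from $Y$; it yields no existence statement for $\phi$. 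There are further obstructions: $f$ is merely proper surjective, not flat, so fpqc descent does not apply (one would need $h$-descent for $\Fdiv$, which the paper cites only in unpublished form and uses only in the purely inseparable case), and replacing $Y\times_XY$ by an alteration destroys the cocycle formalism on the triple product.

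The paper's actual proof is entirely different and supplies precisely the existence statement you are missing. After reducing to $k=\bar k$ and factoring $f$, via Lemma \ref{Frobenius factorization}, into a purely inseparable morphism (which induces an equivalence on $\Fdiv$ by Proposition \ref{purely inseparable maps and Fdiv}) followed by a separable one, the bundles $E_i'$ are realized as points $Q_i'$ of the representation space $R_Y$. The crucial input is the Hrushovski-type density of étale-trivializable (periodic) points: combined with Corollary \ref{cor3.2} and Lemma \ref{separable fiber unique}, it shows that over a suitable dense locally closed subscheme $\sU'\subseteq R_Y$ containing infinitely many $Q_i'$, the map ${f^*}^{-1}(\sU')\to\sU'$ is finite, purely inseparable and surjective (Lemma \ref{object}), hence bijective on $k$-points. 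This produces canonical, \emph{unique} lifts $E_i$ of the $E_i'$, and it is this uniqueness that forces the Frobenius compatibilities $F_X^*E_{i+1}\cong E_i$; the filtration by $\mu$-stable pieces from Theorem \ref{EM10 main lemma} handles the semistability issues needed to keep the lifts inside the moduli space, and spreading out over finite-type $\F_p$-algebras extends the argument from $\bar{\F}_p$ to general perfect $k$. Your proposal correctly identifies the difficulties (infinitesimal or unipotent pieces of the kernel invisible to étale data, singular fibre products) but does not overcome them.
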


Suppose that the geometric fundamental group
$\pi^\et_1(Y_{\bar{k}},\xi')=0$, and let $Y\to X=\Spec(k)$ be the structure map.
Then by
Theorem \ref{ThmII}, the category $\Fdiv(Y)$ is trivial. This result again recovers \cite[Theorem 1.1]{EM10}.


\textbf{Methodology.}
The proofs of both Theorem I and Theorem II build upon the framework developed
in \cite{EM10} and \cite{Sun19}, leveraging the theory of representation spaces.
\begin{itemize}
    \item 
        For Theorem \ref{ThmI}, we reinterpret morphisms between $F$-divided sheaves as sequences of Hom-sheaves on representation spaces. This allows us to reduce the problem to studying geometric properties of these moduli spaces.
    \item For Theorem \ref{ThmII}, however, new challenges arise: Essential
        surjectivity does not translate cleanly into a moduli-theoretic
        formulation, primarily due to complications caused by purely
        inseparable morphisms. To overcome this, we factor $f$ as a composition of a purely inseparable morphism followed by a separable morphism and treat each case separately. This approach forces us to work with singular varieties, which we handle using ad-hoc techniques tailored to the specific geometric and arithmetic properties of the maps involved.
\end{itemize}
The proof of Theorem II is complicated by the necessity of dealing with
singular varieties, and it remains unknown to us whether a systematic approach to handling such singularities is available in this context.

\textbf{Existing literature.} 
Our work builds upon the foundational results of Esnault--Mehta~\cite{EM10}. 
In Sun~\cite{Sun19} and Esnault--Srinivas~\cite{Esnault-Srinivas2016}, the authors proved that if a morphism \( f \colon Y \to X \) induces the zero map on étale fundamental groups, then it also induces the zero map on the \(F\)-divided fundamental group. 

In~\cite{Esnault-Srinivas2019}, Esnault--Srinivas showed that for a normal projective variety \( X \) with regular locus \( U \), the condition \(\pi_1^{\text{ét}}(U) = 0\) implies \(\pi_1^{\Fdiv}(U) = 0\). 

Theorem~\ref{ThmII} was proved in the rank \(1\) case by
Kindler~\cite{Kindler2013}, and more recently for generically étale finite surjective morphisms by Biswas--Kumar--Parameswaran in~\cite{BKP25}. 
Our results extend these works to a broader geometric setting.

\section*{\centering Notation and Conventions}\refstepcounter{NC}\label{s:notation}
\begin{enumerate}[label=(\arabic*)]
    \item A morphism $Y \to X$ of integral schemes is called \emph{separable}
        if the generic point of $Y$ maps to the generic point of $X$, and the
        induced extension of function fields $K(X)\subseteq K(Y)$ is separable
        (possibly infinite), in the sense of \cite[\href{https://stacks.math.columbia.edu/tag/030I}{Tag 030I}]{stacks-project} (see also \cite[\href{https://stacks.math.columbia.edu/tag/030W}{Tag 030W}]{stacks-project}).

    \item Let $X$ be a projective scheme. We will usually fix, implicitly, a polarization on $X$, i.e. a very ample line bundle. Stability notions such as $\mu$-stability (slope stability) and $p$-stability (Gieseker stability) will always be taken with respect to this fixed polarization.

    \item We call a homomorphism of affine group schemes $G\to G'$
        \emph{surjective} if it is faithfuly flat. This is equivalent to saying
        that it is surjective as fpqc-sheaves.

    \item In this paper, we'll mostly fix a positive integer $r \in \N^+$ and consider
    the representation space $R_{X,r} \coloneqq R(X, \xi, rp_{\sO_X})$, where $p_{\sO_X}$ denotes the
    Hilbert polynomial of $\sO_X$. When the rank $r$ is clear or the
    same from the context, we'll simply write $R_X$ for $R_{X,r}$.
\end{enumerate}

\section{Preliminaries}
\subsection{The Nori fundamental group and gerbe}

Let $k$ be a field, and let $X$ be a geometrically reduced and  
geometrically connected scheme of finite type over $k$.  
By \cite[Prop.~5.5, Thm.~5.7]{BV15}, $X$ admits a
\emph{Nori fundamental gerbe} $\Pi_{X/k}^{\NN}$ together with a $1$-morphism
\(
u\colon X \longrightarrow \Pi_{X/k}^{\NN}
\)
which classifies finite stacks \cite[Def.~3.1]{TZ1}.  
That is, for any finite stack $\Gamma$ over $k$, the pullback along $u$ induces an equivalence 
\begin{equation}\label{Nori gerbe}
    \Hom_k(\Pi_{X/k}^{\NN},\Gamma)\xrightarrow{\;\;\cong\;\;}\Hom_k(X,\Gamma).
\end{equation}

If $X$ admits a $k$-rational point $\xi$, then $\Pi_{X/k}^{\NN}$ is neutralized by the point $u(\xi)$.  
In this case, one obtains a profinite $k$-group scheme 
\(
\pi_1^{\NN}(X,\xi),
\)
the \emph{Nori fundamental group} of $X$ at $\xi$, associated with the neutral gerbe $(\Pi_{X/k}^{\NN},u(\xi))$.  
Taking $\Gamma$ in \eqref{Nori gerbe} to be the classifying stack $\sB_k G$ for a finite $k$-group scheme $G$, we obtain:
\begin{equation}\label{Nori fundamental group}
    \Hom_{k\text{-grp.sch}}(\pi_1^{\NN}(X,\xi),G)\xrightarrow{\;\;\cong\;\;}
    \{\text{pointed $G$-torsors over $(X,\xi)$}\}/\cong,
\end{equation}
where the right-hand side denotes the set of isomorphism classes of pairs $(P,p)$ with $P$ a $G$-torsor over $X$ and $p\in P(k)$ lying above $\xi$. 

If $X$ is moreover \emph{proper}, then by \cite[Thm.~7.9]{BV15} there is a natural equivalence of neutral Tannakian categories
\begin{equation}\label{essentially finite VB}
    \varpi \colon \Rep_k(\pi_1^{\NN}(X,\xi))\xrightarrow{\;\;\simeq\;\;} \EFin(X),
\end{equation}
where $\Rep_k(\pi_1^{\NN}(X,\xi))$ denotes the category of finite-dimensional representations of the Nori fundamental group, and $\EFin(X)$ the category of \emph{essentially finite vector bundles} on $X$ (cf.~\cite[Def.~7.7]{BV15}).  

The category $\Rep_k(\pi_1^{\NN}(X,\xi))$ is equipped with the forgetful fiber functor $F$, while $\EFin(X)$ is equipped with the fiber functor $\xi^*$ given by pullback to $\xi$.  
The equivalence $\varpi$ is compatible with fiber functors: there is an isomorphism
\(
\beta\colon \xi^*\circ\varpi \;\;\simeq\;\; F.
\)

The construction of the equivalence \eqref{essentially finite VB} proceeds as follows.  
Given a representation $\rho\colon \pi_1^{\NN}(X,\xi)\to \GL_V$, its \emph{monodromy group}
\(
G \coloneqq \operatorname{im}(\rho)
\)
is a finite subgroup scheme of $\GL_V$.  
The quotient map $\pi_1^{\NN}(X,\xi)\to G$ corresponds, via \eqref{Nori
fundamental group}, to a pointed $G$-torsor $(P,p)$, which extends naturally to
a pointed $\GL_V$-torsor.  
The associated vector bundle $V_\rho$ is defined to be $\varpi(\rho)$.

\subsection{The Nori-étale fundamental group}
The
\emph{Nori-étale fundamental group}
\resizebox{.11\textwidth}{!}{$\pi_1^{\NN,\et}(X,\xi)$} is the pro-étale completion of the group
scheme $\pi_1^{\NN}(X,\xi)$. Restricting 
 \eqref{Nori fundamental group} to a finite étale $k$-group scheme $G$, we have
 a bijection:
\begin{equation}\label{Nori-étale fundamental group}
\Hom_{k\text{-grp.sch}}(\pi_1^{\NN,\et}(X,\xi),G)\xrightarrow{\quad\cong\quad}\{\text{Pointed
    $G$-torsors over $(X,\xi)$}\}/\cong
\end{equation}
The equation
\eqref{Nori-étale fundamental group}  is reminiscent of the universal
property of the usual étale fundamental group $\pi_1^{\et}(X,\xi)$. Indeed, if
$k$ is separably closed,  then $\pi_1^{\et}(X,\xi)$ is nothing but the
$k$-points of 
$\pi_1^{\NN,\et}(X,\xi)$ equipped with the Zariski topology. Conversely, one
obtains $\pi_1^{\NN,\et}(X,\xi)$ out of $\pi_1^{\et}(X,\xi)$ by writing it as
a pro-system of finite groups and taking the limit in the category of constant
$k$-group schemes.

If $X$ is moreover proper, then equivalence \eqref{essentially finite VB} restricts to
    a fully faithful functor
    \begin{equation}\label{essentially finite VB, étale}
        \varpi\colon 
        \Rep_k(\pi_1^{\NN,\et}(X,\xi))\xrightarrow{\quad \subseteq\quad}
        \textrm{EFin}(X)
    \end{equation}
\begin{defn} Let $\EFinet(X)$ denote the essential image of the functor \eqref{essentially finite VB, étale}.
    A vector bundle $V\in\EFinet(X)$ is called an \emph{essential finite vector
    bundle with étale monodromy}. In this
    case, we also say $V$ \emph{comes from an étale representation}. 
\end{defn}

\begin{rmk}\label{etale trivializable}
If $V \in \textrm{EFin}(X)$ has étale monodromy, then the Tannakian subcategory
generated by $V$  corresponds to a finite étale group
scheme $G$; then $V$ is trivialized by a $G$-torsor, which is a finite étale
cover of $X$. Conversely, if $V$ is a vector bundle trivialized by a finite
étale cover, then it is trivialized by some étale Galois cover, so 
$V$ is essentially finite with étale monodromy. Therefore, a vector bundle 
$V$ is in $\EFinet(X)$ if and only if it is \emph{étale trivializable}, 
\emph{i.e.}, it can be trivialized by some finite étale surjective cover of $X$.\end{rmk}

Given any $\rho\in
\Hom_{k-\text{grp.sch}}(\pi_1^{\NN,\et}(X,\xi),\GL_n)$, $\rho$ corresponds, via
\eqref{essentially finite VB, étale}, to  an essentially finite vector bundle
with étale monodromy $V_{\rho}\coloneqq \varpi(\rho)$. Since $\rho$ is a representation on $k^{\oplus
n}$, the isomorphism of fiber functors provides an isomorphism
$\beta\colon \xi^*V_\rho\cong k^{\oplus n}$.
We call the pair $(V_\rho,\beta)$ a \textit{framed bundle}
 on $(X,\xi)$. In this way, we 
obtain a map of sets:
    \begin{equation}\label{representations to bundles}
        \Hom_{k\textup{-grp.sch}}(\pi_1^{\NN,\et}(X,\xi),\GL_n)\xrightarrow{\quad\psi_X\quad}\{\textup{Framed
    bundles $(V,\beta)$ on $(X,\xi)$}\}/\cong
    \end{equation}

\begin{lem}\label{the map psi} If $X$ is a geometrically reduced and geometrically
    connected proper scheme over $k$ equipped with a rational point $\xi\in
    X(k)$, then for any $n\in\N^+$
    \begin{enumerate}[label={\rm(\Alph*)}]
        \item the natural map $\psi_X$ in \eqref{representations to bundles} is
            injective;
        \item its image consists of isomorphism classes of pairs $(E,\beta)$, where
            $E$ is a rank $n$ étale trivializable bundle.
    \end{enumerate}
\end{lem}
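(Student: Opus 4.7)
The entire lemma is a formal consequence of the equivalence \eqref{essentially finite VB, étale} together with its compatibility with fiber functors, once we carefully track the framing data. I would treat (A) and (B) separately.

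For (A), I would start from two representations $\rho_1,\rho_2\colon \pi_1^{\NN,\et}(X,\xi)\to \GL_n$ such that $\psi_X(\rho_1)\cong\psi_X(\rho_2)$ as framed bundles, i.e.\ there exists an isomorphism $\phi\colon V_{\rho_1}\to V_{\rho_2}$ with $\beta_{\rho_2}\circ \xi^*\phi=\beta_{\rho_1}$. Using the full faithfulness of $\varpi$ in \eqref{essentially finite VB, étale}, I would lift $\phi$ to a unique morphism of representations $\tilde\phi\colon(k^n,\rho_1)\to(k^n,\rho_2)$, i.e.\ a $k$-linear map $k^n\to k^n$ intertwining $\rho_1$ and $\rho_2$. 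Under the canonical fiber-functor identifications $\beta_{\rho_i}\colon \xi^*V_{\rho_i}\xrightarrow{\sim} k^n$, the map $\xi^*\phi$ becomes $\tilde\phi$, so the compatibility with framings translates into $\tilde\phi=\mathrm{id}_{k^n}$. But then $\rho_1(g)=\rho_2(g)$ for all $g$, i.e.\ $\rho_1=\rho_2$ as group homomorphisms.

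For (B), the containment of the image in framed étale trivializable bundles is immediate from the definition of $\EFinet(X)$ and Remark~\ref{etale trivializable}. For surjectivity, I would take a framed bundle $(E,\beta)$ with $E$ étale trivializable, so that $E\in\EFinet(X)$ by Remark~\ref{etale trivializable}. Essential surjectivity of \eqref{essentially finite VB, étale} provides a representation $\rho'\colon \pi_1^{\NN,\et}(X,\xi)\to\GL_V$ on some $k$-vector space $V$ together with an isomorphism $f\colon V_{\rho'}\xrightarrow{\sim} E$. Combining $f$ with the framing $\beta$ and the canonical iso $\beta_{\rho'}\colon \xi^*V_{\rho'}\xrightarrow{\sim}V$ yields a $k$-linear isomorphism
\[
\alpha \;\coloneqq\; \beta\circ \xi^*f\circ \beta_{\rho'}^{-1}\colon V\xrightarrow{\;\sim\;}k^n.
\]
Setting $\rho\coloneqq \alpha\,\rho'\,\alpha^{-1}\colon\pi_1^{\NN,\et}(X,\xi)\to\GL_n$, the isomorphism $\alpha$ becomes an isomorphism of representations $\rho'\xrightarrow{\sim}\rho$. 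Applying $\varpi$ produces an isomorphism $V_{\rho'}\xrightarrow{\sim}V_\rho$, and composing with $f^{-1}$ gives an iso $E\xrightarrow{\sim}V_\rho$; an unwinding of the construction of $\alpha$ shows that this iso carries $\beta$ to $\beta_\rho$, hence $(E,\beta)\cong\psi_X(\rho)$.

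The only real content is bookkeeping with the fiber-functor isomorphism $\beta\colon \xi^*\circ \varpi\cong F$, which is what allows us to replace ``isomorphism class of representations'' by ``honest group homomorphism to $\GL_n$''; I expect the main (though still mild) obstacle to be checking that the conjugation used in (B) produces the framing $\beta$ on the nose rather than merely up to automorphism, which is a direct diagram chase using the naturality of $\beta_{(-)}$ in the representation.
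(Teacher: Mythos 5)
Your proposal is correct and follows essentially the same route as the paper: both use the full faithfulness of $\varpi$ in \eqref{essentially finite VB, étale} plus the fiber-functor compatibility to force the intertwiner to be the identity for (A), and both use essential surjectivity followed by conjugation along the frame-induced isomorphism $V\cong k^{\oplus n}$ for (B). Your write-up is, if anything, slightly more explicit than the paper's about the naturality check that the conjugation produces the framing on the nose.
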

\begin{proof} By construction, $\psi(\rho)$ is a pair $(E,\beta)$, where $E$ is
    essentially finite with étale monodromy. Conversely, given a framed bundle
    $(E,\beta)$ with $E$ essentially
    finite having étale monodromy, there is a representation
    $\rho\colon\pi_1^{\NN,\et}(X,\xi)\to\GL_V$ corresponding to $E$ via
    \eqref{essentially finite VB, étale}. Moreover, thanks to the compatibility
    of $\varpi$ with 
    fiber functors, we have  $\xi^*E\cong V$. Now the frame $\beta$ provides an
    isomorphism of $k$-spaces $\gamma\colon V\cong k^{\oplus n}$. This
    isomorphism induces a representation $\rho'\colon
    \pi_1^{\NN,\et}(X,\xi)\to\GL_n$, which is isomorphic to $\rho$ in $\Rep_k(\pi_1^{\NN,\et}(X,\xi))$, such that
    $\psi_X(\rho')\cong(E,\beta)$. This shows (B). 

    To show (A), suppose
    $\rho_1,\rho_2\in\Hom_{k-\textup{grp.sch}}(\pi_1^{\NN,\et}(X,\xi),\GL_n)$
    such that $\psi_X(\rho_1)=\psi_X(\rho_2)$. Set $\psi_X(\rho_i)\coloneqq
    (V_{\rho_i},\beta_i)$, $i=1,2$. Then we have an isomorphism $\lambda\colon
    V_{\rho_1}\cong V_{\rho_2}$ such that
    $\xi^*\lambda\colon\xi^*V_{\rho_1}\cong \xi^*V_{\rho_2}$ is compatible with
    $\beta_i\colon \xi^*V_{\rho_i}\cong k^{\oplus n}$. By \eqref{essentially
    finite VB, étale}, $\lambda$ corresponds uniquely to an isomorphism
    $\rho_1\cong \rho_2$ viewing as representations on $k^{\oplus n}$. Thus
    $\rho_1$ and $\rho_2$ differ at most by an inner automorphism of $\GL_n$.
    However, the assumption that $\beta_2\circ \xi^*\lambda=\beta_1$ implies
    that the choice of the basis are the same, so the inner automorphism must
    be
    trivial, \emph{i.e.} $\rho_1=\rho_2$ as homomorphisms.
\end{proof}

\begin{defn} We will call a framed bundle $(E,\beta)$ on $X$ \emph{étale
    trivializable} if $E$ lies in the image of \eqref{representations to
    bundles}.
\end{defn}

An important property of the Nori-étale fundamental group is that it satisfies
base change by algebraic separable field extensions.
\begin{prop}\label{Base change for Nori} Let $X$ be a geometrically reduced and geometrically
    connected proper scheme over $k$ equipped with a rational point $\xi\in
    X(k)$, and let $k\subseteq l$ be a \textup{(possibly infinite)} algebraic separable field
    extension. The natural map
    $\pi_1^{\NN,\et}(X\times_kl,\xi\times_kl)\to
    \pi_1^{\NN,\et}(X,\xi)\times_kl$ is an isomorphism.
\end{prop}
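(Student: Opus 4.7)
The plan is to verify the statement by comparing universal properties. It suffices to exhibit, for every finite étale $l$-group scheme $H$, a natural bijection
\[
\{\text{pointed $H$-torsors on $(X_l,\xi_l)$}\}/\!\cong \;\;\xrightarrow{\;\cong\;}\;\; \Hom_{l\text{-grp.sch}}(\pi_1^{\NN,\et}(X,\xi)\times_k l,\, H),
\]
compatible with the natural map in the statement. By \eqref{Nori-étale fundamental group} applied to $(X_l,\xi_l)$ over $l$, the left-hand side already represents $\Hom_{l\text{-grp.sch}}(\pi_1^{\NN,\et}(X_l,\xi_l), H)$, so such bijections will force the natural map to be an isomorphism of profinite $l$-group schemes.

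I would first reduce to the case where $l/k$ is finite separable. Writing $l=\varinjlim_i l_i$ as the filtered colimit of its finite separable subextensions $k\subseteq l_i\subseteq l$, every finite étale $l$-group scheme $H$ and every pointed $H$-torsor on $(X_l,\xi_l)$ is of finite presentation and therefore descends to some $l_i$. Both sides of the natural map are thus filtered colimits of their $l_i$ counterparts, so it suffices to prove the statement for each $l_i/k$.

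For finite separable $l/k$, the key tool is the Weil restriction $R_{l/k}$: it is right adjoint to base change $(-)\times_k l$ and preserves the property of being a finite étale group scheme. A Shapiro-style adjunction then identifies pointed $H$-torsors on $(X_l,\xi_l)$ with pointed $R_{l/k}H$-torsors on $(X,\xi)$, the pointing over $\xi\in X(k)$ corresponding to the one over $\xi_l\in X_l(l)$ via the unit of the adjunction. Combining this with \eqref{Nori-étale fundamental group} for $(X,\xi)$ and with the base-change adjunction
\[
\Hom_{l\text{-grp.sch}}(\pi_1^{\NN,\et}(X,\xi)\times_k l,\, H) \;\cong\; \Hom_{k\text{-grp.sch}}(\pi_1^{\NN,\et}(X,\xi),\, R_{l/k}H)
\]
produces the desired bijection.

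The main obstacle will be the careful bookkeeping of pointings through the Weil-restriction correspondence, so as to confirm that the constructed bijection is exactly the one induced by the natural map in the statement (as opposed to some twist of it). Apart from this, the argument is a formal manipulation of universal properties and standard adjunctions.
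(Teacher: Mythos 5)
Your argument is correct, but it is genuinely different from what the paper does: the paper's entire proof is a citation, observing that Nori's and Borne--Vistoli's proof of base change for the full Nori fundamental group scheme (\cite[Chapter II, Prop.~5]{Nori82}, \cite[\S 6]{BV15}) goes through verbatim after replacing ``finite'' by ``finite étale''. You instead give a self-contained torsor-theoretic proof directly from the universal property \eqref{Nori-étale fundamental group}: reduce to $l/k$ finite separable by a limit argument, then use that Weil restriction along the finite étale map $X_l\to X$ sends $H$-torsors to $R_{l/k}H$-torsors and implements the nonabelian Shapiro bijection, matched on the other side by the adjunction $\Hom_l(G\times_kl,H)\cong\Hom_k(G,R_{l/k}H)$. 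Both routes are valid; yours has the advantage of not outsourcing the argument and of making transparent exactly where separability enters (namely, that $\Spec(l)\to\Spec(k)$ is étale, so $X_l\to X$ is finite étale and Shapiro applies), while the paper's is shorter and inherits whatever generality the cited results carry. Two points you should make explicit when writing this up: (i) your bijections must be natural in $H$ --- this is automatic since every step (Weil restriction, the counit of the adjunction, the pointing bookkeeping) is functorial in $H$, but it is what lets you conclude an isomorphism of pro-étale group schemes from bijections on $\Hom(-,H)$; and (ii) the identification of $k$-points of $R_{X_l/X}P$ over $\xi$ with $l$-points of $P$ over $\xi_l$ (via $\Spec(k)\times_XX_l\cong\Spec(l)$), which is the computation that settles the pointing compatibility you flag as the main obstacle.
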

\begin{proof} It is proved in \cite[Chapter II, Prop. 5]{Nori82} and
\cite[§6]{BV15} that the Nori fundamental group satisfies base
change for algebraic separable extensions. By replacing ``finite'' with 
``finite étale'' and $\pi_1^\NN$ with $\pi_1^{\NN,\et}$, the same proof applies verbatim to our case.
\end{proof}

\begin{lem} \label{separable fiber unique} Assume that $X$ is a normal scheme proper over a
    field $k$, and that $f\colon Y\to X$  is a proper surjective separable map
    between two connected geometrically reduced $k$-schemes. Suppose that  $\pi(f)\colon \pi_1^{\NN,\et}(Y,\xi')\to\pi_1^{\NN,\et}(X,\xi)$ is
    surjective, where $\xi\in X(k)$, $\xi'\in Y(k)$ are rational points such
    that
    $f(\xi')=\xi$. Let $V$ be a vector bundle on $X$.
    If $f^*V$  is étale trivializable, then so is $V$.
\end{lem}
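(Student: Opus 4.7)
The plan is to descend the framed trivialization of $f^*V$ along the faithfully flat homomorphism $\pi(f)$, thereby exhibiting $V$ itself as an étale-trivializable bundle. The main tool is Lemma~\ref{the map psi}, which converts framed étale-trivializable bundles into group-scheme homomorphisms out of $\pi_1^{\NN,\et}$.

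Choose a frame $\beta\colon\xi^*V\xrightarrow{\sim}k^{\oplus n}$; since $f(\xi')=\xi$, it induces a frame $\beta'$ on $(\xi')^*f^*V$. As $f^*V\in\EFinet(Y)$, Lemma~\ref{the map psi}(B) produces a unique $\rho'\colon\pi_1^{\NN,\et}(Y,\xi')\to\GL_n$ with $\psi_Y(\rho')=(f^*V,\beta')$. The crux is to show that $\rho'$ factors as $\rho\circ\pi(f)$ for some $\rho\colon\pi_1^{\NN,\et}(X,\xi)\to\GL_n$; since $\pi(f)$ is faithfully flat, this amounts to $\ker\pi(f)\subseteq\ker\rho'$. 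I exploit that $(f^*V,\beta')$ is pulled back from $(V,\beta)$ on $X$: for the two projections $p_1,p_2\colon Y\times_X Y\rightrightarrows Y$, the identity $f\circ p_1=f\circ p_2$ yields a canonical isomorphism $p_1^*(f^*V,\beta')\cong p_2^*(f^*V,\beta')$ of framed bundles. After passing to a geometrically reduced and geometrically connected proper component $T\subseteq Y\times_X Y$ containing $\eta=(\xi',\xi')$ — separability of $f$ and normality of $X$ are what make such a $T$ available so that Lemma~\ref{the map psi} applies — the injectivity of $\psi_T$ forces $\rho'\circ\pi(p_1|_T)=\rho'\circ\pi(p_2|_T)$. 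A Tannakian descent argument using faithful flatness of $\pi(f)$ then upgrades this cocycle equality into the desired factorization $\rho'=\rho\circ\pi(f)$.

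With $\rho$ in hand, Lemma~\ref{the map psi}(B) produces a framed étale-trivializable bundle $(E,\beta_E)$ on $(X,\xi)$ together with a framed isomorphism $f^*(E,\beta_E)\cong(f^*V,\beta')$. To descend this to $E\cong V$ on $X$, pass to the Stein factorization $f=\bar f\circ f'$, where $f'_*\sO_Y=\sO_{Y'}$ and $\bar f\colon Y'\to X$ is finite separable. The projection formula gives full faithfulness of $f'^*$ on vector bundles, while for $\bar f$ finite separable with $X$ normal the inclusion $\sO_X\hookrightarrow\bar f_*\sO_{Y'}$ splits via the normalized trace, so $\bar f^*$ is full as well. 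Therefore $E\cong V$, and $V\in\EFinet(X)$.

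The principal obstacle is the descent of $\rho'$: $Y\times_X Y$ is typically reducible, non-normal, and singular, so a careful selection of the component $T$ and a Tannakian interpretation of the faithfully flat quotient $\pi_1^{\NN,\et}(Y,\xi')\twoheadrightarrow\pi_1^{\NN,\et}(X,\xi)$ are both needed to force $\rho'$ to vanish on $\ker\pi(f)$ from the cocycle equality on $T$ alone. Separability of $f$ and normality of $X$ enter crucially here; once this step is settled, the remainder is routine bookkeeping via Stein factorization.
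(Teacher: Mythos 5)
Your overall strategy differs from the paper's, and it has two genuine gaps at exactly the points where the real work lies.

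First, the central step --- upgrading the equality $\rho'\circ\pi(p_1|_T)=\rho'\circ\pi(p_2|_T)$ on a single component $T$ of $Y\times_XY$ to the factorization $\rho'=\rho\circ\pi(f)$ --- is asserted rather than proved. Factoring $\rho'$ through the faithfully flat map $\pi(f)$ is equivalent to $\rho'(\ker\pi(f))=1$, and for a general proper surjective separable $f$ the kernel of $\pi(f)$ is not controlled by the fundamental groups of components of $Y\times_XY$: there is no effective-descent statement for finite \'etale torsors (or for $\EFinet$) along such morphisms that your ``Tannakian descent argument'' could invoke. Equivalently, you are implicitly claiming that the minimal Galois cover of $Y$ trivializing $f^*V$ is pulled back from $X$, which is precisely the nontrivial content of the lemma. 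The paper does not attempt this: it uses the Stein factorization to reduce to $f$ finite and generically \'etale, observes that $V$ is then trivialized by the composite generically \'etale surjective map $X'\to Y\to X$ (where $X'\to Y$ is a finite \'etale cover trivializing $f^*V$), and invokes the theorem of Biswas--dos Santos, Antei--Mehta (and its non-closed-field variant) that a vector bundle on a normal proper variety trivialized by a finite surjective generically \'etale morphism is \'etale trivializable. That theorem is not formal --- it fails without separability, e.g.\ for Frobenius-trivialized bundles --- so it cannot be replaced by descent bookkeeping.

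Second, even granting a framed isomorphism $f^*E\cong f^*V$ with $E$ \'etale trivializable, your descent to $E\cong V$ is incorrect: the trace splitting of $\sO_X\hookrightarrow\bar f_*\sO_{Y'}$ makes $\bar f^*$ faithful and yields a retraction of $\Hom(E,V)\to\Hom(\bar f^*E,\bar f^*V)$, but not fullness, since $\Hom(\bar f^*E,\bar f^*V)=H^0(X,\sHOM(E,V)\otimes\bar f_*\sO_{Y'})$ is in general strictly larger than $\Hom(E,V)$. An isomorphism upstairs therefore need not descend. The paper's route avoids constructing a candidate $E$ altogether, so this issue never arises there.
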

\begin{proof} Using the Stein factorization
(cf.~\cite[\href{https://stacks.math.columbia.edu/tag/03GX}{03GX}]{stacks-project}
and
\cite[Exposé X, Prop.~1.2]{SGA1}), we factor $f$ as 
$Y \xrightarrow{h} Z \xrightarrow{g} X$,
where $\sO_Z \xrightarrow{\cong} h_*\sO_Y$ and $g$ is finite and generically
étale. 
Consequently, the pullback functor $h^*$ from the category of essentially finite
vector bundles on $Z$ to that on $Y$ is fully faithful. Since $f^*V = h^*(g^*V)$ 
is étale trivializable, it follows that $g^*V$ is also étale trivializable. 
Thus, without loss of generality, we may assume $f = g$, i.e., $f$ is finite and 
generically étale.
Since $f^*V$ is étale trivializable, there exists a finite étale cover 
$g\colon X' \to X$ that trivializes $f^*V$. Hence, $V$ is trivialized by the 
generically étale morphism $f \circ g$. The result now follows from
\cite[Thm. 1 (ii)]{BDS12}, \cite[Thm. 1.1]{AM11}, see also 
\cite[Thm. II, 2]{TZ3} for the case where $k$ is not algebraically closed.
\end{proof}

\subsection{The representation space} 
The \emph{representation space} $R_X$ is a fine moduli space classifying
$p$-semistable vector bundles on $X$ with fixed Hilbert polynomial together
with a framing at a basepoint. Unlike coarse moduli spaces, which parametrize
$S$-equivalence classes, 
this space distinguishes between non-isomorphic bundles through the additional framing data.
This construction provides essential geometric tools for studying the relationship between
fundamental groups and bundle theory via Tannakian methods.

Let $S$ be an affine variety over $\F_q$, and let $X_S \to S$ be a flat morphism with geometrically integral fibers. Fix a polynomial $P$ of degree $d \coloneqq \dim(X_S/S)$ and a relative ample line bundle $\sO_{X_S}(1)$ on $X_S$. Suppose that $\xi_S\colon S \to X_S$ is an $S$-section. 

\begin{defn} \label{moduli condition}
    \begin{enumerate}
        \item A coherent sheaf $\sF$ on $X_S$ is called \textit{$p$-semistable with Hilbert polynomial $P$} if it is flat over $S$ and $\sF_s$ is $p$-semistable with Hilbert polynomial $P$ on each geometric fiber $X_s$ of $X_S \to S$.
        \item We say that $\sF$ satisfies condition $\mathrm{LF}(\xi_S)$ if $\gr(F_s)$ is locally free at $\xi_S(s)$ for any geometric point $s$ of $S$.
    \end{enumerate}
\end{defn}

\begin{thm}
    There exists an $S$-scheme $R(\sO_{X_S}, \xi_S, P)$ that represents the functor associating to any $T \to S$ the set of framed sheaves $(\sE, \beta)$, where:
    \begin{itemize}
        \item $\sE$ is a $p$-semistable torsion-free sheaf of Hilbert polynomial $P$ on $X_T/T$ satisfying condition $\mathrm{LF}(\xi_T)$,
        \item $\beta\colon \xi_T^*(\sE) \cong \sO_T^{\oplus r}$ is a frame.
    \end{itemize}
    The fine moduli space $R(\sO_{X_S}, \xi_S, P)$ is called \emph{the representation space}.
\end{thm}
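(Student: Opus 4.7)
The plan is to realize $R(\sO_{X_S},\xi_S,P)$ as a quotient of a suitable Quot scheme by a free group action, in the spirit of the classical construction of framed moduli spaces. First, by the boundedness theorems of Langer for $p$-semistable sheaves in positive characteristic, one chooses an integer $N\gg 0$ such that for every $p$-semistable torsion-free sheaf $\sE$ of Hilbert polynomial $P$ on any geometric fiber of $X_S\to S$, the twist $\sE(N)$ is globally generated with $H^i(\sE(N))=0$ for $i>0$ and $h^0(\sE(N))=P(N)=:m$. Then one forms the relative Quot scheme $Q:=\mathrm{Quot}_{X_S/S}(\sO^{\oplus m}(-N),P)$, projective over $S$ by Grothendieck, carrying a universal quotient $\sO^{\oplus m}(-N)\twoheadrightarrow \sE_Q$ on $X_Q:=X_S\times_S Q$.

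Next, let $Q^\circ\subseteq Q$ be the open subscheme over which: (i) $\sE_Q$ is $p$-semistable on geometric fibers of $X_Q\to Q$; (ii) the natural map $\sO_{Q^\circ}^{\oplus m}\to \mathrm{pr}_*(\sE_Q(N))$ is an isomorphism, where $\mathrm{pr}\colon X_{Q^\circ}\to Q^\circ$, so that the universal quotient is the tautological evaluation map; and (iii) $\gr(\sE_Q)$ is locally free at $\xi_{Q^\circ}$ on geometric fibers. Openness of (i) is Langer's openness of semistability; openness of (ii) comes from cohomology and base change together with the vanishing guaranteed by the choice of $N$; openness of (iii) follows from semicontinuity applied to the Jordan--Hölder subquotients. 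On $Q^\circ$, the sheaf $\xi_{Q^\circ}^*\sE_Q$ is locally free of rank $r$, and one forms the $\GL_r$-torsor
\[
\mathrm{Fr}:=\Isom_{Q^\circ}\bigl(\xi_{Q^\circ}^*\sE_Q,\sO_{Q^\circ}^{\oplus r}\bigr)\longrightarrow Q^\circ.
\]

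The group $\GL_m$ acts on $Q$ by change of basis on $\sO^{\oplus m}$, restricting to $Q^\circ$; condition (ii) ensures the stabilizer of every $T$-point is trivial, so this action is free. Since the underlying sheaf $\sE_Q$ is $\GL_m$-invariant, the action lifts trivially along the fibers of $\mathrm{Fr}\to Q^\circ$. I would then set $R(\sO_{X_S},\xi_S,P):=\mathrm{Fr}/\GL_m$ and verify that it represents the framed moduli functor: given $(\sE,\beta)$ on $X_T/T$, fpqc-locally on $T$ one chooses an isomorphism $H^0(\sE(N))\cong\sO_T^{\oplus m}$, producing a $T$-point of $Q^\circ$; combined with $\beta$, this yields a $T$-point of $\mathrm{Fr}$; two different choices of isomorphism differ by the free $\GL_m$-action and hence descend to a canonical $T$-point of the quotient.

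The hard part will be showing that the free quotient $\mathrm{Fr}/\GL_m$ exists as a genuine \emph{scheme}, rather than merely as an algebraic space. I would address this via Simpson-style GIT: equip $Q$ with a relatively ample $\GL_m$-linearization so that $Q^\circ$ lies inside the GIT-semistable locus, lift the linearization to $\mathrm{Fr}$, and take the GIT quotient; freeness of the action ensures agreement between the GIT and categorical quotients, yielding a quasi-projective scheme over $S$. Verifying the openness conditions defining $Q^\circ$ and the compatibility of the entire construction with arbitrary base change $S'\to S$ is a routine but laborious exercise combining cohomology and base change with Langer's openness results.
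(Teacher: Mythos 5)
Note first that the paper does not reprove this theorem: its ``proof'' is a citation to \cite[Theorem 2.3]{Sun19}, whose construction (going back to Esnault--Mehta) has the same architecture you propose --- Quot scheme, open locus cut out by semistability and $\mathrm{LF}(\xi)$, frame torsor at $\xi$, quotient by $\GL_m$. So your blueprint is the right one, but the two steps that carry all the content are wrong as you have written them. The central gap is your freeness claim. Condition (ii) does \emph{not} make the $\GL_m$-action on $Q^\circ$ free: the stabilizer of a quotient point $[q\colon \sO^{\oplus m}(-N)\twoheadrightarrow \sE]$ is canonically $\mathrm{Aut}(\sE)$ (acting on $H^0(\sE(N))\cong k^{\oplus m}$), which always contains the scalars $\mathbb{G}_m$. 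Moreover the universal sheaf is $\GL_m$-\emph{equivariant}, not invariant, so the action does not ``lift trivially along the fibers'' of $\mathrm{Fr}\to Q^\circ$: a stabilizing element $g$ acts on the frame through the induced automorphism $\bar g$ of $\xi^*\sE$. Freeness of the action on $\mathrm{Fr}$ is therefore \emph{equivalent} to the injectivity of $\mathrm{Aut}(\sE)\to \GL(\xi^*\sE)$, and proving that injectivity is exactly where the hypothesis $\mathrm{LF}(\xi)$ earns its keep: one runs a d\'evissage through a Jordan--H\"older filtration, using that a nonzero map between $p$-stable sheaves with the same reduced Hilbert polynomial is an isomorphism and that the graded pieces are locally free at $\xi$, so that $\xi^*$ is exact on the filtration and a nonzero endomorphism cannot vanish at $\xi$. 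This rigidity lemma is the heart of the construction in \cite{EM10} and \cite{Sun19}, and it is entirely absent from your argument; as written, your quotient would not even be well defined as a solution to the moduli problem.

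The second gap is the existence of the quotient as a scheme. Your appeal to ``Simpson-style GIT'' with the principle that ``freeness of the action ensures agreement between the GIT and categorical quotients'' is not a valid argument on the semistable locus: strictly $p$-semistable points of $Q^\circ$ are GIT-semistable but not properly stable, their orbits are not closed, and the GIT quotient of $Q^{ss}$ identifies $S$-equivalent sheaves regardless of whether the action happens to be set-theoretically free. To make a GIT argument work for the framed space one must change the linearization so that \emph{every} framed point becomes properly stable (this is the content of Huybrechts--Lehn's theory of framed modules, with an auxiliary stability parameter), or else abandon GIT and produce the quotient of the free action directly (local slices / descent, as in \cite{EM10}). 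Either route requires an actual argument that you have not supplied. Secondary but nontrivial: the local closedness of the locus where $\mathrm{LF}(\xi)$ holds cannot be dispatched by ``semicontinuity applied to the Jordan--H\"older subquotients,'' since the JH filtration is neither unique nor well behaved in families; this too needs care (cf.\ \cite[Theorem 4.8]{Langer24} and the discussion around Definition \ref{moduli condition}).
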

\begin{proof} 
    See \cite[Theorem 2.3]{Sun19}.
\end{proof}

\subsection{$F$-divided sheaves}
\emph{$F$-divided sheaves} are a characteristic $p$ analogue of local systems. An $F$-divided sheaf is a sequence $(E_i, \sigma_i)_{i \in \mathbb{N}}$ of vector bundles $E_i$ together with isomorphisms $\sigma_i: F^* E_{i+1} \xrightarrow{\sim} E_i$, where $F$ is the Frobenius morphism. This structure captures the notion of a ``sheaf with flat connection'' in positive characteristic, providing a Tannakian category equivalent to representations of the topological fundamental group for smooth projective complex varieties.

\begin{defn} 
    Let $X$ be a scheme over a field $k$ of characteristic $p > 0$. Denote by $\Fdiv(X)$ the category whose
    \begin{itemize}
        \item \textbf{objects are:} $(E_i, \sigma_i)_{i \in \mathbb{N}}$, where $E_i$ is a finitely presented sheaf on $X$ and $\sigma_i$ is an isomorphism $F_X^* E_i \xrightarrow{\cong} E_{i-1}$;
        \item \textbf{morphisms are:} $(f_i)_{i \in \mathbb{N}} \colon (E_i, \sigma_i)_{i \in \mathbb{N}} \to (E_i', \sigma_i')_{i \in \mathbb{N}}$, where each $f_i$ is a map of $\sO_X$-modules and $\sigma_i' \circ F_X^* f_i = f_{i-1} \circ \sigma_i$.
    \end{itemize}
\end{defn}

\begin{rmk} 
    Note that if $X$ is a geometrically connected scheme of finite type over a perfect field $k$ of characteristic $p > 0$, then each $E_i$ is a vector bundle. In particular, $\Fdiv(X)$ is a Tannakian category over $k$ (cf.~\cite[\S 2.2]{dS2007} and \cite[Theorem I (3)]{TZ1}). Thus we have the following definition.
\end{rmk}

\begin{defn}
    Let $X$ be a connected scheme of finite type over a perfect field $k$ of
    $\Char(k)>0$, and
    let $x \in X(k)$ be a $k$-rational point. We denote by $\pi_1^\Fdiv(X, x)$
    the affine group scheme corresponding to the neutral Tannakian category
    $\Fdiv(X)$ together with the neutral fiber functor $\omega_x$ sending
    $(E_i,\sigma_i)_{i\in\N}\mapsto x^*E_0$. 
\end{defn}

\begin{thm}\label{etale trivializable bundles are stratified bundles} 
    Suppose that $X$ is geometrically connected proper over a perfect field $k$ of
    $\Char(k)>0$. There is a fully faithful functor $\EFinet(X) \to \Fdiv(X)$ whose essential image consists precisely of the essentially finite objects \textup{(cf.~\cite[Def. 7.7]{BV15})} of $\Fdiv(X)$.
\end{thm}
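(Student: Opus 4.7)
The plan is to construct a functor $\Phi\colon \EFinet(X) \to \Fdiv(X)$ using the canonical Frobenius-invariance of étale-trivializable bundles, verify full faithfulness by tracking the compatibility relations, and identify the essential image by exploiting the fact that Frobenius pullback is an auto-equivalence of $\Fdiv(X)$.

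Given $V \in \EFinet(X)$, Remark \ref{etale trivializable} supplies a finite étale surjective cover $p\colon Y\to X$ with $p^*V \cong \sO_Y^{\oplus n}$. Since $p$ is étale, the relative Frobenius $F_{Y/X}$ is an isomorphism, inducing a canonical identification $p^*(F_X^*V) \cong p^*V$ that descends along $p$ to a canonical isomorphism $\sigma_V\colon F_X^*V \xrightarrow{\cong} V$ independent of the chosen cover. Set $\Phi(V) \coloneqq (E_i,\sigma_i)_{i\in\N}$ with $E_i = V$ and $\sigma_i = \sigma_V$; for a morphism $f\colon V\to V'$, define $\Phi(f) \coloneqq (f,f,\ldots)$, which is compatible with the $\sigma$'s by naturality. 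For full faithfulness, any $(f_i)\colon \Phi(V)\to\Phi(V')$ must satisfy $\sigma_{V'}\circ F_X^*(f_i) = f_{i-1}\circ \sigma_V$, so successive $f_i$ determine each other via the canonical iso; applying the same invariance to the objects themselves identifies all $f_i$ with a single morphism in $\Hom(V,V')$.

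For the essential image, $\Phi$ is a $k$-linear tensor functor compatible with fiber functors, so it carries the finite Tannakian subcategory of $\EFinet(X)$ generated by $V$ (whose Tannakian group is the finite étale monodromy of $V$) to a finite Tannakian subcategory of $\Fdiv(X)$; thus $\Phi(V)$ is essentially finite. Conversely, let $(E_\bullet,\sigma_\bullet)\in \Fdiv(X)$ be essentially finite, and let $G$ be the finite $k$-group scheme representing the Tannakian subcategory it generates. The shift $(E_i,\sigma_i)\mapsto (E_{i+1},\sigma_{i+1})$ is a quasi-inverse to $F_X^*$ on $\Fdiv(X)$ via the $\sigma_i$'s, so $F_X^*$ is an auto-equivalence of $\Fdiv(X)$. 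Under the Tannakian dictionary, the restriction of $F_X^*$ to $\Rep_k(G)\subseteq \Fdiv(X)$ corresponds to pullback along the Frobenius endomorphism $F_G\colon G\to G$ (using that $k$ is perfect to identify $G^{(p)}\cong G$); since this functor is an equivalence, $F_G$ must be an isomorphism, forcing $G$ to be étale. It follows that $E_0$ is trivialized by the étale $G$-torsor representing this subcategory, so $E_0 \in \EFinet(X)$ and $(E_\bullet,\sigma_\bullet)\cong \Phi(E_0)$.

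The main obstacle is this last step --- identifying the auto-equivalence induced by $F_X^*$ on the finite Tannakian quotient with the Frobenius endomorphism of $G$, and concluding étaleness from the standard criterion that a finite $k$-group scheme is étale precisely when its relative Frobenius is an isomorphism. Although Tannakian machinery should handle this cleanly, care is needed to verify that the dictionary between bundles and representations genuinely exchanges $F_X^*$ with pullback along $F_G$, and to ensure the chosen fiber functor is compatible with the identification $G^{(p)}\cong G$.
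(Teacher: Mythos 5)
Your construction of the functor breaks at the first step: for $V \in \EFinet(X)$ there is in general \emph{no} isomorphism $F_X^*V \cong V$, canonical or otherwise, so the constant sequence $E_i = V$ does not define an object of $\Fdiv(X)$. Concretely, take a line bundle $L$ with $L^{\otimes 3} \cong \sO_X$ and $p=5$: then $L$ is trivialized by the associated $\mu_3$-torsor, which is finite étale since $3$ is prime to $p$, so $L \in \EFinet(X)$; but $F_X^*L \cong L^{\otimes 5} \cong L^{\otimes 2} \not\cong L$. The flaw in your argument is the descent step: the identification $p^*F_X^*V = F_Y^*p^*V \cong p^*V$ uses a chosen trivialization of $p^*V$, which is not compatible with the descent data on $Y\times_X Y$ (if it were, $V$ itself would be trivial), and the fact that the relative Frobenius $F_{Y/X}$ is an isomorphism is a statement about the scheme $Y$, not about the bundle $p^*V$. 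In representation-theoretic terms, if $V$ corresponds to $\rho$, then $F_X^*V$ corresponds to the Frobenius twist $\rho^{(p)}$ (matrix entries raised to the $p$-th power), which need not be isomorphic to $\rho$. The correct $F$-divided structure takes $E_i$ to be the bundle attached to the untwist $\rho^{(p^{-i})}$ --- these exist because $k$ is perfect and the monodromy is finite étale --- and the $E_i$ are in general pairwise non-isomorphic. Since your full-faithfulness argument is deduced from the same nonexistent canonical isomorphisms, it needs to be redone as well.

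The second half of your proposal --- showing that an essentially finite object of $\Fdiv(X)$ has étale monodromy by observing that the shift is a quasi-inverse of $F_X^*$, transporting this equivalence to $\Rep_k(G)$, and concluding that the Frobenius of $G$ is an isomorphism, hence that $G$ is étale --- is the right idea and is essentially how this direction is proved in the literature, though one still has to check that the shift preserves the Tannakian subcategory generated by the given object. For comparison, the paper gives no direct argument at all: it simply invokes \cite[Thm.~6.23, Thm.~5.8]{TZ1}, where both the construction of the functor (with the correctly untwisted $E_i$) and the identification of the essential image are carried out.
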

\begin{proof} 
    This is a consequence of \cite[Thm.~6.23, Thm.~5.8]{TZ1}.
\end{proof}

An important result that we are going to use repeatedly is the following:

\begin{thm}\label{EM10 main lemma}
Let $X$ be a smooth connected projective scheme over a perfect field of characteristic $p > 0$.  
For any $E \coloneqq (E_i, \sigma_i)_{i \in \mathbb{N}} \in \Fdiv(X)$, the following hold:
\begin{enumerate}[label={\rm(\arabic*)}]
    \item The Chern classes of each $E_n$ $(n \geq 0)$ vanish.
    \item There exists $n_0 \in \mathbb{N}$ such that the \emph{$n_0$-shifted object}
    \(
        E(n_0) \coloneqq (E_{i+n_0}, \sigma_{i+n_0})_{i \in \mathbb{N}}
    \)
    is a successive extension of stratified bundles $U = (U_i, \delta_i)_{i \in \mathbb{N}}$, with each $U_i$ $\mu$-stable.
\item[\textup{(2')}] Suppose that $X$ is \emph{normal} with regular locus $U \subseteq
    X$. If $E(n)$ is not stable for any $n\in\N$, then there exists
    $n_0\in\N$ and a sequence of stable subsheaves
    \(
        (F_i \subseteq E_i(n_0))_{i \in \mathbb{N}}
    \)
    which form a proper $F$-divided subsheaf restricting to $U$.
    \item Each $E_n$ is $p$-semistable with reduced Hilbert polynomial $p_{\sO_X}$.
\end{enumerate}
\end{thm}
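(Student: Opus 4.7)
The plan is to dispatch items (1) and (3) via numerical arguments on Frobenius pullback, then use these as input to the structural statements (2) and (2'). For (1), the key input is that $F_X$ is finite and pullback acts on the codimension-$k$ numerical Chow group $N^k(X)$, which is a finitely generated free abelian group, by multiplication by $p^k$. Iterating $c_k(E_i) = p^k c_k(E_{i+1})$ gives $c_k(E_0) = p^{nk} c_k(E_n)$ for every $n$, so $c_k(E_0)$ is divisible by arbitrarily large powers of $p$ in $N^k(X)$ and must vanish; torsion-freeness then forces $c_k(E_n) = 0$ for all $n$. Part (3) follows: Chern-class vanishing plus Hirzebruch--Riemann--Roch gives reduced Hilbert polynomial $p_{\mathcal{O}_X}$, and if some $E_m$ had a $\mu$-destabilizing subsheaf $G$ with $\mu(G) > 0$, then $F_X^{m*} G \hookrightarrow E_0$ would have slope $p^m \mu(G)$, which exceeds the finite quantity $\mu_{\max}(E_0)$ for $m$ large. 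Running the same Frobenius-pullback trick on the next-order term of the Hilbert polynomial for slope-zero subsheaves upgrades $\mu$-semistability to Gieseker semistability.

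For (2), by (3) the entire sequence $\{E_n\}_{n \ge 0}$ lives in the bounded family of $p$-semistable sheaves on $X$ with fixed Hilbert polynomial $r \cdot p_{\mathcal{O}_X}$. I would consider the socle $\mathrm{Soc}(E_n) \subseteq E_n$, i.e.\ the maximal polystable subsheaf of the same reduced Hilbert polynomial. Since Frobenius pullback preserves polystability for slope-zero sheaves with trivial Chern character (by iterating the argument of (3)), the pullback $F_X^* \mathrm{Soc}(E_{n+1})$ lands inside $\mathrm{Soc}(E_n)$. Boundedness of the family implies that the ranks stabilize for $n \ge n_0$, so the socles assemble into an $F$-divided subobject $U \subseteq E(n_0)$ whose members are polystable. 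Splitting $U$ into its stable direct summands yields stratified subbundles with $\mu$-stable members at each level, and induction on rank applied to $E(n_0)/U$ completes the filtration.

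The genuine obstacle is (2'). On a normal but singular $X$, neither Harder--Narasimhan nor socle filtrations behave well under reflexivization and Frobenius pullback, so the argument of (2) cannot be run on $X$ directly. My plan is to restrict to the regular locus $U$, apply (2) there to produce a stable $F$-divided subobject $(F_i^{\circ} \subseteq E_i(n_0)|_U)$, and then extend each $F_i^{\circ}$ to a coherent subsheaf $F_i \subseteq E_i(n_0)$ via saturation of $j_* F_i^{\circ}$ inside $E_i(n_0)$, where $j\colon U \hookrightarrow X$ is the open inclusion. Compatibility with the structure maps $\sigma_{i+n_0}$ is first verified on $U$ and then propagated to $X$ using normality and the fact that $\operatorname{codim}_X(X \setminus U) \ge 2$, which provides Hartogs-type extension of sections. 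The key difficulty---and the reason the conclusion yields only a proper $F$-divided subsheaf restricting to $U$ rather than a full filtration as in (2)---is that the extended subsheaves $F_i$ need not inherit stability on all of $X$, only on the regular locus.
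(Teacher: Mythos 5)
Your overall route coincides with the paper's: the paper proves this theorem simply by citing \cite[Lem.~2.1, Cor.~2.2, Prop.~2.3]{EM10} together with the published erratum, and your items (1), (3) and the socle strategy for (2) are reconstructions of exactly those arguments (the divisibility of Chern classes in the finitely generated torsion-free group $N^k(X)$, the growth of slopes under Frobenius pullback, the socle filtration). Item (1) and the $\mu$-semistability half of (3) are correct as written.

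The genuine gap is the parenthetical in (2) asserting that ``Frobenius pullback preserves polystability for slope-zero sheaves with trivial Chern character (by iterating the argument of (3))''. Iterating the argument of (3) only shows that for a stable summand $S$ of $\mathrm{Soc}(E_{n+1})$ the pullback $F_X^*S$ is a slope-$0$ subsheaf of the $\mu$-semistable slope-$0$ sheaf $E_n$, hence itself $\mu$-semistable; it does not show that $F_X^*S$ splits as a direct sum of stable sheaves. A semistable slope-$0$ sheaf can perfectly well be a non-split extension of stable ones, and no slope bookkeeping rules this out for $F_X^*S$. Consequently the inclusion $F_X^*\mathrm{Soc}(E_{n+1})\subseteq \mathrm{Soc}(E_n)$, on which your rank-stabilization and assembly of the $F$-divided subobject rest, is unjustified as stated --- this is precisely the delicate point that necessitated the erratum to \cite{EM10} which the paper cites, so a complete proof must supply a real argument here rather than a one-line appeal to (3). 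Two smaller thin spots: in (3), the upgrade from $\mu$-semistability to $p$-semistability requires a boundedness input (an upper bound on $\mathrm{ch}_2\cdot H^{d-2}$ over all saturated slope-$0$ subsheaves of the fixed sheaf $E_0$) before the ``next-order Frobenius trick'' can run, and you do not supply it; and in (2') you cannot literally ``apply (2) on $U$'', since (2) is proved for projective $X$ and $U$ is only quasi-projective --- one must rerun the argument with degrees and Hilbert polynomials computed on $X$ via saturated extensions. (The paper's own adaptation instead stays on $X$ and replaces the non-flat pullback $F_X^*S$ by its image, using that this image agrees with the expected sheaf in codimension $\le 1$; your restrict-and-saturate variant is an acceptable equivalent once the polystability gap above is filled.)
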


\begin{proof}
Statements (1), (2), and (3) follow from \cite[Lem.~2.1, Cor.~2.2, Prop.~2.3]{EM10} together with the 
\href{https://page.mi.fu-berlin.de/esnault/preprints/helene/95-erratum-prop3.2.pdf}{Erratum}.  
For (2'), one adapts the argument in loc.~cit. by noting that the Frobenius pullback is only flat on $U$.  
Instead of taking the Frobenius pullback of the socle directly, one considers
its image.  
Since this image agrees with the original sheaf at all points of codimension
$\leq 1$, the $\mu$-slope and $\mu$-stability are preserved. In this way, one
finds $n_0\in\N$ and  a sequence of stable subsheaves
    \(
        (F_i \subseteq E_i(n_0))_{i \in \mathbb{N}}
    \)
    which form an $F$-divided subsheaf restricting to $U$. If
    $(F_i|_U)_{i\in\N}=E(n_0)|_U$, then $E_i$ are stable for $i\geq n_0$
    contradicting to the assumption. 
\end{proof}

\begin{cor}\label{make F-divided sheaves points of the representation space} 
    Let $X$ be a smooth connected projective scheme defined over a perfect
    field $k$ of characteristic $p > 0$, and let $x$ be a $k$-point of $X$. Then for any $E \in \Fdiv(X)$ of rank $r$, there is $n_0 \in \mathbb{N}$ and a compatible choice of trivializations $\beta_i \colon x^* E_i \cong \sO_k^{\oplus r}$ such that $(E_i, \beta_i) \in R(X, x, rp_{\sO_X})(k)$ for $i \geq n_0$.
\end{cor}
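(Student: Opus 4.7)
The plan is to verify, for $i \geq n_0$, that each $E_i$ meets the three defining properties of a framed point of $R(X, x, rp_{\sO_X})(k)$: $p$-semistability with Hilbert polynomial $rp_{\sO_X}$, condition $\mathrm{LF}(x)$, and existence of a frame $\beta_i$, with the frames chosen coherently across $i$. The main subtlety will be establishing the LF condition, which demands local freeness of the Jordan--Hölder graded of $E_i$, not merely of $E_i$ itself.

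Two of the required properties come almost immediately from Theorem \ref{EM10 main lemma}. Part (1) gives vanishing Chern classes of each $E_i$; combined with $\mathrm{rk}(E_i) = r$ (Frobenius pullback preserves rank), Riemann--Roch forces the Hilbert polynomial of $E_i$ to be $rp_{\sO_X}$. Part (3) supplies $p$-semistability. Moreover, the Remark following the definition of $\Fdiv(X)$ ensures each $E_i$ is a vector bundle on the smooth scheme $X$, hence in particular torsion-free.

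For condition $\mathrm{LF}(x)$, I would invoke Theorem \ref{EM10 main lemma}(2), which produces $n_0 \in \mathbb{N}$ such that $E(n_0)$ is a successive extension of stratified bundles whose terms $U_i$ are $\mu$-stable, hence locally free. Since $\mu$-stability implies $p$-stability, for $i \geq n_0$ we obtain a filtration of $E_i$ whose quotients are $p$-stable vector bundles; by uniqueness (up to permutation) of the $p$-Jordan--Hölder graded pieces of a $p$-semistable sheaf, $\gr(E_i)$ is isomorphic to a direct sum of these locally free quotients and is therefore locally free. After base change to $\bar{k}$ one refines the quotients further to $\mu$-stable pieces over $\bar{k}$; the refined graded is still a direct sum of locally free sheaves, which establishes $\mathrm{LF}(x)$ on geometric fibers.

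Finally, for the compatible frames: since $x$ is a $k$-rational point and $k$ is perfect, we have $F_X \circ x = x \circ F_{\Spec k}$ with $F_{\Spec k}$ an isomorphism, hence $x^* F_X^* = F_{\Spec k}^* \circ x^*$. Starting from any frame $\beta_{n_0}\colon x^* E_{n_0} \cong k^{\oplus r}$, I would inductively define $\beta_i$ for $i > n_0$ to be the unique frame satisfying $F_{\Spec k}^* \beta_i = \beta_{i-1} \circ x^* \sigma_i$; such a $\beta_i$ exists and is unique because pullback along the isomorphism $F_{\Spec k}$ is an equivalence of categories of $k$-vector spaces. This produces the desired sequence of points $(E_i, \beta_i) \in R(X, x, rp_{\sO_X})(k)$ for $i \geq n_0$, compatible in the sense that each $\beta_i$ is determined from the next by the $F$-divided structure.
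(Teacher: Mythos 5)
Your proof is correct and follows the same route as the paper: the paper's own proof is a one-line remark that only addresses the frames (any $\beta_0$ determines all $\beta_i$ uniquely because $k$ is perfect), leaving the moduli conditions as implicit consequences of Theorem~\ref{EM10 main lemma}, which is exactly how you justify them. Your more detailed verification of the Hilbert polynomial, $p$-semistability, and the $\mathrm{LF}(x)$ condition via the $\mu$-stable graded pieces simply makes explicit what the paper takes for granted.
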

\begin{proof} 
    Pick any $\beta_0 \colon x^* E_0 \cong \sO_k^{\oplus r}$; then $\beta_i$ ($i \geq 1$) are uniquely determined simply because $k$ is perfect.
\end{proof}

In view of Theorem \ref{etale trivializable bundles are stratified bundles}, Corollary \ref{EM10 main lemma} implies that every étale trivializable bundle $E$ on $X$, when equipped with a frame $\beta \colon x^* E \cong \sO_k^{\oplus r}$, can be viewed as a rational point of the representation space $R(X, x, rp_{\sO_X})$. More generally, we have:

\begin{lem} 
    Let $X$ be a normal connected projective scheme over a perfect field $k$, and let $x$ be a smooth $k$-point of $X$. Then any étale trivializable sheaf $E$ together with a choice of a frame $\beta$ can be viewed as a $k$-rational point of $R(X, x, rp_{\sO_X})$. 
\end{lem}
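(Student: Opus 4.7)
The plan is to verify that $(E,\beta)$ satisfies the moduli conditions defining a $k$-rational point of $R(X,x,rp_{\sO_X})$: namely, that $E$ is a $p$-semistable torsion-free coherent sheaf on $X$ with Hilbert polynomial $rp_{\sO_X}$ satisfying $\LF(x)$, equipped with the frame $\beta$. By hypothesis there is a finite étale surjection $g\colon Y\to X$ with $g^*E\cong\sO_Y^{\oplus r}$; after taking a Galois closure, we may assume $g$ is Galois with group $G$, so that $E$ corresponds to a representation $\rho\colon G\to\GL_r(k)$.

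Local freeness (hence torsion-freeness) of $E$ follows from fpqc descent along the faithfully flat morphism $g$, and $\mu$-semistability with respect to $g^*\sO_X(1)$ is likewise descended from $g^*E=\sO_Y^{\oplus r}$; thus $E$ is $\mu$-semistable of slope zero, and in particular $p$-semistable. For the Hilbert polynomial identity $p_E=rp_{\sO_X}$, I would combine the projection-formula chain $\chi(X,E\otimes g_*\sO_Y(n))=\chi(Y,g^*E(n))=r\chi(X,g_*\sO_Y(n))$ with induction on the composition length of $\rho$: each $G$-subrepresentation of $\rho$ produces a short exact sequence of étale trivializable subsheaves of $E$, so by additivity of Hilbert polynomials the verification reduces to the case of an irreducible $\rho$. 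The irreducible case can then be handled by a Chern-character calculation, using that for étale $g$ one has $\operatorname{ch}(g_*\sO_Y)=|G|$ in $\operatorname{CH}^*(X)_{\Q}$ via the identity $\mathrm{Td}(Y)=g^*\mathrm{Td}(X)$ and Grothendieck--Riemann--Roch.

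For the condition $\LF(x)$, I would base-change to $\bar k$: the composition series of $\rho_{\bar k}$ gives a filtration of $E_{\bar k}$ by étale trivializable (hence locally free) subsheaves. Any $p$-Jordan--Hölder refinement has $p$-stable subquotients; a local analysis at the smooth point $x$, together with reflexivity of stable sheaves in the smooth locus, shows that the $\gr$ of this filtration is locally free in a Zariski neighborhood of $x$. The frame $\beta$ is supplied by hypothesis, completing the verification.

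The main obstacle is the Hilbert polynomial step in the case where $p$ divides $|G|$: there the trace splitting of $g_*\sO_Y\to\sO_X$ fails, and Grothendieck--Riemann--Roch on a normal, possibly singular $X$ requires care to extend the Chern-character identity from the smooth locus. Once this point is settled, the verification of the remaining moduli conditions reduces to local arguments near the smooth point $x$ and the Tannakian structure of essentially finite bundles with étale monodromy.
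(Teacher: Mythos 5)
Your overall strategy---checking the two conditions of Definition~\ref{moduli condition} directly---matches the paper's at the top level, but the paper disposes of both conditions by citation (condition (1), $p$-semistability with Hilbert polynomial $rp_{\sO_X}$, to \cite[\S 1.2]{Langer12}; condition (2), $\LF(x)$, to \cite[Thm.~4.8]{Langer24}), whereas you attempt to prove them from scratch, and two of your steps do not go through as written.

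The most concrete problem is the assertion that $E$ is ``$\mu$-semistable of slope zero, and in particular $p$-semistable'': this implication is false in general, even for locally free sheaves. Gieseker semistability is strictly stronger than slope semistability; for example, on a smooth projective surface carrying a line bundle $L$ with $L\cdot H=0$ but $L\cdot K_X\neq 0$ (such surfaces exist, e.g.\ products of curves of different genera), the bundle $L\oplus L^{-1}$ is $\mu$-semistable of slope zero, yet $\chi(L(n))-\chi(L^{-1}(n))=-L\cdot K_X\neq 0$, so one of the two summands Gieseker-destabilizes it. Thus slope semistability of $E$ (which does descend from $g^*E\cong\sO_Y^{\oplus r}$) does not yield condition (1) by itself; what is really needed is that $E$ has the Chern character (and hence the Hilbert polynomial) of $\sO_X^{\oplus r}$, and on a normal but possibly singular $X$ this is precisely the point you flag as ``the main obstacle'' and never resolve---there is no naive Hirzebruch--Riemann--Roch on singular $X$, and the trace splitting is unavailable when $p$ divides $\deg g$. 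Similarly, the $\LF(x)$ condition concerns local freeness at $x$ of the graded pieces of a Jordan--H\"older filtration of the $p$-semistable sheaf $E$, and the appeal to ``a local analysis at the smooth point $x$, together with reflexivity of stable sheaves'' does not explain why those graded pieces are locally free there; this is a genuine theorem (the cited \cite[Thm.~4.8]{Langer24}), not a routine local computation. In short, the two points you identify as technical difficulties are exactly the mathematical content of the lemma, and the proposal does not supply them.
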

\begin{proof} 
    One has to check the two conditions in Definition \ref{moduli condition}. (1) is well-known (cf.~\cite[\S 1.2]{Langer12}) -- any essentially finite vector bundle of rank $r$ is semistable with reduced Hilbert polynomial $rp_{\sO_X}$. (2) follows from \cite[Theorem 4.8]{Langer24}.
\end{proof}

The following lemma is adapted from \cite[Theorem~3.7]{Sun19} and provides a way to approximate $F$-divided sheaves via étale trivializable sheaves in the moduli spaces.

Recall that for $r\in\N^+$, we denote $R_{X,r}\coloneqq R(X,\xi,rp_{\sO_X})$.
For an $n$-tuple $\vec{r}\coloneqq(r_1,\dots,r_n)$, set $
R_{X,\vec{r}}\coloneqq R_{X,r_1}\times\cdots\times R_{X,r_n}$.

\begin{lem}\label{Definition of the subscheme N} 
    Let $X$ be a connected projective scheme over an algebraically closed field $k$ equipped with a $k$-rational point $\xi$. Let $\Sigma \coloneqq \{(Q_i^{(1)}, \cdots, Q_i^{(n)})\}_{i \in \mathbb{N}}$ be a collection of $k$-points of the moduli space $R_{X,\vec{r}}$ such that $F_X^* Q_{i+1}^{(1)} = Q_i^{(1)}$, $\cdots$, $F_X^* Q_{i+1}^{(n)} = Q_i^{(n)}$. There is a closed integral subscheme $\sN \subseteq R_{X,\vec{r}}$ such that:
    \begin{enumerate}[label=\textup{(\Alph*)}]
        \item $\sN(k)$ contains $(Q_i^{(1)}, \cdots, Q_i^{(n)})$ for infinitely many $i \in \mathbb{N}$;
        \item $\Sigma \cap \sN(k)$ is dense in $\sN$;
        \item There is some $a \in \mathbb{N}^+$ such that the Frobenius pullback of the moduli space induces a dominant rational map $(F_X^*)^a \colon \sN \dashrightarrow \sN$;
        \item If $k = \bar{\F}_p$, then $\sN(k)$ contains a dense subset of points of the form $(Q_i^{(1)}, \cdots, Q_i^{(n)})$ with $Q_i^{(1)}, \cdots, Q_i^{(n)}$ being periodic.
    \end{enumerate}
\end{lem}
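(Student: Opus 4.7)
The plan is to perform a Noetherian descent inside the representation space $R_{X,\vec{r}}$ to build $\sN$, and then to iterate with the Frobenius pullback to obtain the dominance required in (C).

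For (A) and (B), I would use the Noetherian property of $R_{X,\vec{r}}$ (which is of finite type over $k$). Let $\mathcal{F}$ denote the family of irreducible closed subsets $Z \subseteq R_{X,\vec{r}}$ with $Z \cap \Sigma$ infinite; pigeonholing over the finitely many irreducible components of $\overline{\Sigma}$ shows $\mathcal{F} \neq \emptyset$, and Noetherian descent produces a minimal element $\sN_0 \in \mathcal{F}$. For this $\sN_0$ the closed subset $\overline{\sN_0 \cap \Sigma} \subseteq \sN_0$ has, by pigeonhole, an irreducible component containing infinitely many $Q_i$; minimality forces this component to equal $\sN_0$, so $\sN_0 \cap \Sigma$ is dense in $\sN_0$.

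For (C), I would sharpen the choice of $\sN$ via dimensional minimality. Frobenius pullback induces a rational self-map $\phi \colon R_{X,\vec{r}} \dashrightarrow R_{X,\vec{r}}$ defined at every $Q_{i+1}$ with value $\phi(Q_{i+1}) = Q_i$ (by the hypothesis on $\Sigma$). Setting $\mathcal{M}_a \coloneqq \overline{\phi^a(\sN_0)}$, each $\mathcal{M}_a$ is irreducible, meets $\Sigma$ densely in $\{Q_j : j + a \in I\}$ (where $I \coloneqq \{i : Q_i \in \sN_0\}$), and $\dim \mathcal{M}_a$ is non-increasing in $a$. I would then pick $\sN \in \mathcal{F}$ of minimum dimension $d$ (still with $\sN \cap \Sigma$ dense by the argument above). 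Then $\mathcal{M}_a \in \mathcal{F}$ for every $a$, and minimality of $d$ forces $\dim \mathcal{M}_a = d$, so each $\phi \colon \mathcal{M}_a \dashrightarrow \mathcal{M}_{a+1}$ is a dominant generically finite rational map between $d$-dimensional irreducible varieties. Granting that $\{\mathcal{M}_a\}_{a \geq 0}$ is eventually periodic---say $\mathcal{M}_{a_0} = \mathcal{M}_{a_1}$ for some $a_0 < a_1$---I replace $\sN$ by $\mathcal{M}_{a_0}$ and take $a \coloneqq a_1 - a_0$; then $\phi^a \colon \sN \dashrightarrow \sN$ is dominant, establishing (C).

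For (D), when $k = \bar{\F}_p$, I would apply a Hrushovski-type density theorem for periodic points of a dominant rational self-map of an irreducible $\bar{\F}_p$-variety to $\phi^a \colon \sN \dashrightarrow \sN$. Each such periodic point corresponds to a tuple of Frobenius-periodic framed bundles, giving the required dense subset of $\sN(k)$. The main obstacle is the eventual periodicity of $\{\mathcal{M}_a\}$ in the proof of (C): this is not automatic from dimension alone, and I expect to prove it by showing that the iterates lie in a bounded family of subvarieties of $R_{X,\vec{r}}$ (so that only finitely many distinct $\mathcal{M}_a$ appear), following the strategy of the analogous step in \cite[Thm.~3.7]{Sun19}.
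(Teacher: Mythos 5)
Your argument for (A) and (B) via Noetherian minimality is correct and is a clean variant of what the paper does (the paper instead removes the extraneous irreducible components of a larger closed set $\sZ$ to get density on the chosen component). The real problem is the step you yourself flag: the eventual periodicity of the forward images $\mathcal{M}_a=\overline{\phi^a(\sN)}$. This is a genuine gap, not a routine verification. Knowing that all $\mathcal{M}_a$ are irreducible of the same dimension $d$ inside a Noetherian scheme does not bound their number: a Noetherian variety can contain infinitely many pairwise distinct irreducible closed subsets of a fixed dimension (lines in a plane), and the auxiliary descending chain $\overline{\bigcup_{a\ge b}\mathcal{M}_a}$ only stabilizes to a closed set whose components may have dimension strictly larger than $d$, so no pigeonhole on components applies. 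Your proposed repair (``the iterates lie in a bounded family'') is exactly the unproved content, and I do not see how to extract it from the minimality of $d$ alone. Since (C) feeds directly into (D) (Hrushovski's theorem needs a dominant rational self-map of a fixed irreducible variety), the gap propagates to (D) as well.

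The paper closes this hole by building the Frobenius-stable set \emph{before} passing to an irreducible piece, and this is where the two arguments genuinely diverge. It sets $R_m\coloneqq\{(Q_i^{(1)},\dots,Q_i^{(n)})\mid i\ge m\}$ and $\sZ_m\coloneqq\overline{R_m}$; these form a \emph{descending} chain of closed subsets (because $R_{m+1}\subseteq R_m$), which stabilizes by Noetherianity to a closed set $\sZ$ satisfying $\overline{F_X^*(\sZ)}=\sZ$, since $F_X^*(R_{m+1})=R_m$. A dominant rational self-map of $\sZ$ permutes its finitely many irreducible components up to dominance, so some power $(F_X^*)^a$ maps a component $\sN$ dominantly to itself, and one checks that $\sN$ can be chosen to contain infinitely many of the given points. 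The key structural difference is that the stabilization happens for a naturally nested family (tails of the index set), whereas your $\mathcal{M}_a$ are not nested. If you want to salvage your write-up, replace the forward-iteration of $\sN$ by this descending-chain construction; your minimality argument for (B) can then be kept or replaced by the paper's component-removal argument.
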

\begin{proof}
    Let's concentrate on $n = 2$ and $r_1=r_2$ -- other cases are completely analogous. Consider the subset 
    \[
    R_m \coloneqq \Set{(P_i, Q_i) | i \geq m} \subseteq R_X \times_k R_X.
    \]
    We have a descending chain 
    \[
    R_0 \supseteq R_1 \supseteq R_2 \supseteq \cdots \supseteq R_m \supseteq R_{m+1} \supseteq \cdots
    \]
    satisfying $F_X^*(R_{i+1}) = R_i$. Let $\sZ_i \coloneqq \overline{R_i} \subseteq R_X \times_k R_X$ be the Zariski closure of $R_i$ with reduced closed subscheme structure. We have a descending chain:
    \[
    \sZ_0 \supseteq \sZ_1 \supseteq \cdots \supseteq \sZ_m \supseteq \sZ_{m+1} \supseteq \cdots
    \]
    Since $R_X$ is Noetherian, there is $n_0 > 0$ such that $\sZ_{n_0} = \sZ_{n_0+1} = \cdots$. Let 
    \[
    \sZ \coloneqq \bigcap_{i=1}^{\infty} \sZ_i \subseteq R_X \times_k R_X.
    \]
    We have $R_{n_0} \subseteq \sZ$ is dense, and $F_X^* \colon \sZ \dashrightarrow \sZ$ is a dominant rational map \cite[Prop. 2.5]{Sun19}. Thus there is an irreducible component $\sN \subseteq \sZ$ such that:
    \begin{itemize}
        \item $\sN(k)$ contains $(P_i, Q_i)$ for infinitely many $i \in \mathbb{N}$;
        \item There is an integer $a > 0$ such that $(F_X^*)^a \colon \sZ \dashrightarrow \sZ$ induces a dominant rational map $(F_X^*)^a \colon \sN \dashrightarrow \sN$.
    \end{itemize}
    
    If $k = \bar{\F}_p$, then applying Lemma \cite[Lemma 3.6]{Sun19}, the subset $\Frob \subseteq \sN(k)$ consisting of points $(\sQ, \sQ') \in \sN(k)$ such that $(F_X^*)^m \sQ = \sQ$ and $(F_X^*)^m \sQ' = \sQ'$ for some $m \in \mathbb{N}^+$ is dense. This is (D).

    Let $\{Z_i\}_{i=1,\dots,n}$ be the irreducible components of $\sZ$ which are not $\sN$. Then $\sU = \sZ \setminus (\cup_{i=1,\dots,n} Z_i)$ is an open dense (resp. open) subset of $\sN$ (resp. $\sZ$). Since $\Sigma \cap \sZ(k)$ is dense in $\sZ$, $\Sigma \cap \sU(k)$ is dense in $\sU$. Therefore, $\Sigma \cap \sN(k)$ is dense in $\sN$. This is (B).
\end{proof}


The following lemma shows that $\mu$-stability descends along surjective maps.

\begin{lem}\label{pullback of stable is stable}
    Let $f \colon Y \to X$ be a surjective map of normal projective integral schemes of finite type over an algebraically closed field $k$, where $Y$ is smooth. Suppose $E \in \Fdiv(X)$ is an $F$-divided sheaf. If each $f^* E_i$ is $\mu$-stable \textup{(}for any chosen polarization\textup{)}, then $E_i$ is $\mu$-stable for $i \gg 0$ and for any polarization.
\end{lem}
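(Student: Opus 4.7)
The approach is to argue by contradiction: assume $E_i$ fails $\mu$-stability for infinitely many $i$, apply Theorem \ref{EM10 main lemma}(2') to construct a proper $F$-divided subsheaf on the smooth locus of $X$, and pull back to $Y$ to contradict the $\mu$-stability of $f^*E_i$.

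Concretely, suppose that for some polarization $H_X$ on $X$ the sheaf $E_i$ is not $\mu_{H_X}$-stable for infinitely many $i$; after shifting indices, assume this holds for all $i \geq 0$. Theorem \ref{EM10 main lemma}(2') then furnishes an integer $n_0$ and a proper sequence of $\mu$-stable saturated subsheaves $F_i \subseteq E_{i+n_0}$ whose restrictions to the regular locus $U \subseteq X$ form an $F$-divided subsheaf of $(E_{i+n_0}|_U)$. Since $(F_i|_U)$ is $F$-divided on smooth $U$, the relation $c_1(F_i|_U) = p \cdot c_1(F_{i+1}|_U)$ in $\mathrm{Pic}(U)$ makes $c_1(F_0|_U)$ infinitely $p$-divisible. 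Combined with $X$ being normal with $X \setminus U$ of codimension $\geq 2$ (so that $\mathrm{Pic}(X) \hookrightarrow \mathrm{Pic}(U)$), the finite generation of $\mathrm{NS}(X)$, and the $p$-divisibility of $\mathrm{Pic}^0(X)$, one deduces that $c_1(F_i)$ is numerically trivial on $X$. By the projection formula, $c_1(f^*F_i)$ is numerically trivial on $Y$, so $\mu_{H_Y}(f^*F_i) = 0$ for any polarization $H_Y$.

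To finish, let $\tilde F_i \subseteq f^*E_{i+n_0}$ denote the saturation of the image of the natural map $f^*F_i \to f^*E_{i+n_0}$; it is a proper saturated subsheaf of rank $\mathrm{rk}\, F_i < \mathrm{rk}\, E_{i+n_0}$. On the open $V \subseteq Y$ where $f|_V$ is flat with $f(V) \subseteq U$, one has $\tilde F_i|_V = f^*F_i|_V$, which has numerically trivial $c_1$. Showing $\mu_{H_Y}(\tilde F_i) = 0$ then contradicts the $\mu$-stability of $f^*E_{i+n_0}$. The main obstacle is that $Y \setminus V$ can have codimension $1$ in $Y$ (for instance when $f$ contracts divisors onto $\mathrm{Sing}(X)$ or onto the non-flat locus), so the codim-$1$ corrections to $c_1(\tilde F_i)$ must be controlled; this requires either exploiting the Frobenius compatibility of $f^*F_i|_{f^{-1}(U)}$ to force such corrections to themselves be infinitely $p$-divisible (hence numerically trivial), or passing to a suitable birational modification of $Y$.
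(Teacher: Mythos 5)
Your overall strategy coincides with the paper's: argue by contradiction, invoke Theorem \ref{EM10 main lemma}~(2') to produce a proper $F$-divided subsheaf $(F_i|_U)$ of $E(n_0)|_U$ on the regular locus $U\subseteq X$, and transport it to $Y$ to contradict the $\mu$-stability of the $f^*E_i$. But your writeup stops exactly at the decisive step and says so: you need the saturations $\tilde F_i\subseteq f^*E_{i+n_0}$ to have slope $0$ on all of $Y$, you concede that you cannot control $c_1(\tilde F_i)$ along the divisorial part of $Y\setminus f^{-1}(U)$, and you offer only two unexecuted options (``exploit Frobenius compatibility'' or ``pass to a birational modification''). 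As written this is a genuine gap: without that step you have no destabilizing subsheaf of $f^*E_{i+n_0}$ defined on all of $Y$, and the contradiction never materializes. A secondary unaddressed point is that you have not verified that the saturations $\tilde F_i$ are Frobenius-compatible on all of $Y$ --- a priori they agree with the saturation of $F_Y^*\tilde F_{i+1}$ only over $f^{-1}(U)$, which need not be a big open of $Y$ --- and that compatibility is precisely what would make the $p$-divisibility of the slope available globally.

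The paper closes exactly this gap by citing \cite[Lemma 2.5]{Kindler2015}: since $Y$ is smooth, the $F$-divided subsheaf $(f^*F_i)|_{U'}$, with $U'=f^{-1}(U)$, extends to an $F$-divided subsheaf of $f^*E$ on all of $Y$; this is the rigorous form of your first suggested fix, the Frobenius structure forcing the codimension-one corrections to vanish. Once one has an honest $F$-divided subsheaf of $f^*E$ with each $f^*E_i$ $\mu$-stable of slope $0$, the standard divisibility-and-discreteness argument for slopes forces the subsheaf to equal $f^*E$, hence $F_i|_U=E_i|_U$, contradicting properness. Your intermediate computation of the numerical triviality of $c_1(F_i)$ on $X$ (via $\mathrm{Pic}(X)\hookrightarrow\mathrm{Pic}(U)$, the finite generation of $\mathrm{NS}(X)$, and $p$-divisibility of $\mathrm{Pic}^0$) is a detour the paper does not need, and it is itself slightly off: on a merely normal $X$ the determinant of $F_i$ is only a reflexive rank-one sheaf, so the relevant restriction is on Weil divisor class groups rather than on $\mathrm{Pic}$. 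If you replace your final paragraph by an appeal to Kindler's extension lemma (or prove that extension statement yourself), the argument goes through.
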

\begin{proof}
    Suppose $E_i(n)$ was not $\mu$-stable for all $n \in\N$. Then by Theorem
    \ref{EM10 main lemma} (2'), there is a sequence of stable subsheaves
    $(F_i)_{i \in \mathbb{N}}$ of $(E_i, \sigma_i)$ which form a  proper
    $F$-divided subsheaf restricting to the regular locus $U$ of $X$. Then we obtain an $F$-divided subsheaf $(f^* F_i)|_{U'} \subseteq f^* E|_{U'}$, where $U' \coloneqq f^{-1}(U)$. Thanks to \cite[Lemma 2.5]{Kindler2015}, $(f^* F_i)|_{U'}$ extends to an $F$-divided subsheaf of $f^* E$. Considering that $f^* E_i$ is stable, $f^* E$ must be the aforementioned extension. This implies that $F_i|_U = E_i|_U$ for all $i \in \mathbb{N}$, a contradiction! 
\end{proof}


\subsection{Artin-Schreier theory for framed bundles}
The following result is often referred to as the Lange-Stuhler theorem. However, it is a consequence of the ``non-abelian Artin-Schreier theory'' (cf.~\cite[\S 3.2]{Laszlo01} and \cite[\S 1]{BD07}), due to Deligne, which is based on the Lang isogeny.

\begin{lem}\label{Lange-Stuhler}
    Let $X$ be a scheme over $k$, and let $F_X \colon X \to X$ be the absolute Frobenius morphism. If there exists a vector bundle $\mathcal{E}$ on $X$ and an integer $m > 0$ such that $(F_X^*)^m \mathcal{E} \cong \mathcal{E}$, then $\mathcal{E}$ is étale trivializable.
\end{lem}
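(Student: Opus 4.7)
The plan is to invoke Deligne's non-abelian Artin--Schreier theory (the Lang isogeny method) to convert the Frobenius-periodicity datum $(\mathcal{E},\phi)$ into a finite étale trivializing cover of $\mathcal{E}$. Writing $r$ for the rank of $\mathcal{E}$, the strategy is to construct a finite étale $\GL_r(\F_{p^m})$-torsor on $X$ over which $\mathcal{E}$ acquires a tautological frame.

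Let $P := \underline{\Isom}_{\mathcal{O}_X}(\mathcal{O}_X^{\oplus r},\mathcal{E})$ denote the frame bundle of $\mathcal{E}$, a Zariski-locally trivial right $\GL_{r,\F_p}$-torsor on $X$. The hypothesis $\phi \colon (F_X^*)^m\mathcal{E} \xrightarrow{\sim} \mathcal{E}$ induces an isomorphism $\Phi \colon F_X^{m*}P \xrightarrow{\sim} P$ of $\GL_r$-torsors. I would then define the étale subsheaf $P^{\Phi} \subseteq P$ cut out by the non-abelian Artin--Schreier equation $\Phi(F_X^{m*}s) = s$. Since the fixed locus of $F^m$ on $\GL_r$ is precisely $\GL_r(\F_{p^m})$, the right $\GL_r$-action on $P$ restricts to an action of $\GL_r(\F_{p^m})$ on $P^{\Phi}$.

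The key step is to show that $P^{\Phi}$ is representable by a finite étale $\GL_r(\F_{p^m})$-torsor over $X$. Zariski-locally on $X$, one may trivialize $P$ so that $\Phi$ is given by left multiplication by some $A \in \GL_r(\mathcal{O}_X)$, and the defining equation becomes $A \cdot s^{(p^m)} = s$, where $s^{(p^m)}$ denotes the matrix obtained by raising all entries of $s$ to the $p^m$-th power. Setting $h := s^{-1}$ transforms this into $L(h) = A$, where $L \colon \GL_r \to \GL_r$, $g \mapsto g^{-1} F^m(g)$, is the Lang isogeny. Since $L$ is a finite étale Galois covering with group $\GL_r(\F_{p^m})$, its fiber $L^{-1}(A)$ is, locally on $X$, a finite étale $\GL_r(\F_{p^m})$-torsor; functoriality of the construction $(\mathcal{E},\phi) \mapsto P^{\Phi}$ ensures that these local torsors glue to a global one. (Alternatively, one may argue purely by the Jacobian criterion: the derivative of $s \mapsto A s^{(p^m)} - s$ equals $-I$ since all partials of $p^m$-th powers vanish, so $P^{\Phi}$ is automatically étale over $X$, and its generic degree is $|\GL_r(\F_{p^m})|$.)

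Once $P^{\Phi}$ is realized as a finite étale cover $\pi \colon Y \to X$, the tautological section of $\pi^*P$ provides a trivialization $\pi^*\mathcal{E} \cong \mathcal{O}_Y^{\oplus r}$, exhibiting $\mathcal{E}$ as étale trivializable. The main obstacle, and the content handled by the Deligne/Laszlo formalism in \cite{Laszlo01,BD07}, is the cocycle bookkeeping needed to globalize the locally defined Lang pullbacks into a single étale $\GL_r(\F_{p^m})$-torsor over $X$ compatibly with the transition data for $\mathcal{E}$; once this is in place, the conclusion is formal.
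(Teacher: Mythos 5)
Your argument is correct and is precisely the route the paper has in mind: the paper gives no proof of Lemma \ref{Lange-Stuhler}, only a pointer to Deligne's non-abelian Artin--Schreier theory via the Lang isogeny (\cite{Laszlo01}, \cite{BD07}), which is exactly what you carry out. The local computation $A\,s^{(p^m)}=s \Leftrightarrow L(s^{-1})=A$, the étaleness via the vanishing of the differential of $F^m$, and the identification of the deck group with $\GL_r(\F_{p^m})$ are all as in that reference, so nothing further is needed.
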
 

\begin{defn} 
    Let $(E, \beta)$ be a framed bundle on $X$. If $(E, \beta) \cong (F_X^*)^a (E, \beta) \coloneqq ((F_X^*)^a E, (F_k^*)^a \beta)$ for some $a \in \mathbb{N}^+$, then we say $(E, \beta)$ is \emph{periodic}.  
\end{defn}

As we will see, Lemma \ref{Lange-Stuhler} implies that periodic framed bundles are étale trivializable.

Now assume that $f \colon Y \to X$ is a morphism between geometrically connected and geometrically reduced projective varieties over a perfect field $k$ of characteristic $p > 0$.

\begin{cor}\label{cor3.2} 
    Suppose that the induced map $\pi(f) \colon \pi_1^\et(Y, \xi') \to \pi_1^\et(X, \xi)$ is an isomorphism. Let $(E, \beta)$ be an étale trivializable framed bundle on $Y$. 
    \begin{enumerate}[label={\rm(\Alph*)}]
        \item If $(E, \beta)$ is periodic, then there exists a unique homomorphism $\rho \colon \pi_1^\et(Y, \xi') \to \GL_n$ such that $\psi_Y(\rho) = (E, \beta)$;
        \item There exists a framed bundle $(F, \alpha)$ on $X$, where $F$ is étale trivializable (cf.~Remark \ref{etale trivializable}), such that $(f^* F, f^* \alpha) \cong (E, \beta)$;
        \item If $(F, \alpha)$ and $(F', \alpha')$ are two framed bundles on $X$ with $F, F'$ étale trivializable, then $(f^* F, f^* \alpha) \cong (f^* F', f^* \alpha')$ implies $(F, \alpha) \cong (F', \alpha')$.
    \end{enumerate}
\end{cor}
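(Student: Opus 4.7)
The plan is to reduce each assertion to Lemma \ref{the map psi}, using Lange--Stuhler (Lemma \ref{Lange-Stuhler}) to supply étale trivializability in (A) and the isomorphism $\pi(f)$ to transfer representations between $Y$ and $X$ in (B)--(C). Throughout I freely identify representations of the $k$-group scheme $\pi_1^{\NN,\et}(\cdot)$ on $\GL_n$ with ordinary homomorphisms $\pi_1^\et(\cdot)\to\GL_n$, which is legitimate since the Nori-étale fundamental group is the pro-étale completion of the relevant group schemes.

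For (A), periodicity means $(F_Y^*)^a E\cong E$ for some $a\in\N^+$, so Lemma \ref{Lange-Stuhler} gives that $E$ is étale trivializable. Hence Lemma \ref{the map psi}(B) places $(E,\beta)$ in the image of $\psi_Y$, while Lemma \ref{the map psi}(A) guarantees that the representation $\rho$ with $\psi_Y(\rho)=(E,\beta)$ is unique. No hypothesis on $f$ is needed here.

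For (B), since $E$ is étale trivializable, Lemma \ref{the map psi} yields $\rho_Y\colon \pi_1^\et(Y,\xi')\to\GL_n$ with $\psi_Y(\rho_Y)=(E,\beta)$. Setting $\rho_X\coloneqq \rho_Y\circ\pi(f)^{-1}$ and $(F,\alpha)\coloneqq \psi_X(\rho_X)$, the bundle $F$ is étale trivializable by construction; functoriality of the assignment $\psi$ with respect to pullback, that is,
\[
    f^*\psi_X(\rho)\;\cong\;\psi_Y(\rho\circ\pi(f)),
\]
applied to $\rho=\rho_X$ yields $f^*(F,\alpha)\cong \psi_Y(\rho_Y)=(E,\beta)$. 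For (C), apply the same functoriality to the representations $\rho,\rho'$ of $\pi_1^\et(X,\xi)$ with $\psi_X(\rho)=(F,\alpha)$ and $\psi_X(\rho')=(F',\alpha')$ furnished by Lemma \ref{the map psi}. The assumed isomorphism $f^*(F,\alpha)\cong f^*(F',\alpha')$ translates via injectivity of $\psi_Y$ into $\rho\circ\pi(f)=\rho'\circ\pi(f)$, and surjectivity of $\pi(f)$ forces $\rho=\rho'$; applying $\psi_X$ finishes the argument.

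The only nontrivial step to verify carefully is the functoriality identity $f^*\psi_X(\rho)\cong\psi_Y(\rho\circ\pi(f))$, including the matching of frames: pulling back the associated bundle together with its trivialization at $\xi$ along $f$ (which sends $\xi'$ to $\xi$) must reproduce the associated bundle for $\rho\circ\pi(f)$ with its trivialization at $\xi'$. This is built into the construction of $\psi$ from the equivalence \eqref{essentially finite VB, étale} and the naturality of the fiber functors $\xi^*,(\xi')^*$; the frames match tautologically because the torsor and the isomorphism of fiber functors both pull back cleanly. Modulo this bookkeeping, the corollary is a purely formal consequence of Lemma \ref{the map psi}.
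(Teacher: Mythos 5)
Your proposal is correct and follows essentially the same route as the paper: Lange--Stuhler plus Lemma \ref{the map psi} for (A), and transport of representations along $\pi(f)^{-1}$ together with the naturality of $\psi$ for (B) and (C). The only difference is cosmetic --- you explicitly flag and justify the functoriality identity $f^*\psi_X(\rho)\cong\psi_Y(\rho\circ\pi(f))$, which the paper simply invokes as ``naturality of $\psi$.''
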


\begin{proof} 
    (A) By assumption, $E \cong (F_X^*)^a E$; thus by Lemma \ref{Lange-Stuhler}, the bundle $E$ is étale trivializable. In view of Lemma \ref{the map psi}, the pair $(E, \beta)$ lies in the image of $\psi_Y$. Therefore, there exists a unique homomorphism $\rho \colon \pi_1^\et(Y, \xi') \to \GL_n$ such that $\psi_Y(\rho) = (E, \beta)$.

    (B) Set $\rho_0 \coloneqq \rho \circ \pi(f)^{-1} \colon \pi_1^\et(X, \xi) \to \GL_n$, and define $(F, \alpha) \coloneqq \psi_X(\rho_0)$. Since $\rho_0 \circ \pi(f) = \rho$, we have $f^* \psi_X(\rho_0) = \psi_Y(\rho)$ by the naturality of $\psi$. As $f^* \psi_X(\rho_0) \cong (f^* F, f^* \alpha)$, it follows that $(f^* F, f^* \alpha) \cong (E, \beta)$.

    (C) Since $F'$ is also étale trivializable, the pair $(F', \alpha')$ lies in the image of $\psi_X$ by Lemma \ref{the map psi}. Thus there exists a unique homomorphism $\rho_0' \colon \pi_1^\et(X, \xi) \to \GL_n$ such that $\psi_X(\rho_0') = (F', \alpha')$. Now $(f^* F, f^* \alpha) \cong (f^* F', f^* \alpha')$ implies that $\rho_0' \circ \pi(f) = \rho_0 \circ \pi(f)$. Since $\pi(f)$ is an isomorphism, it follows that $\rho_0' = \rho_0$. Therefore, $(F, \alpha) \cong \psi_X(\rho_0) = \psi_X(\rho_0') \cong (F', \alpha')$.
\end{proof}

The following lemma represents a ``trivial case'' of our discussion.

\begin{lem}\label{lem2.3} 
    Let $E = (E_i)_{i \in \mathbb{N}}$ be an $F$-divided sheaf on $X$, and let $S \subseteq \mathbb{N}$ be an infinite subset. If the set of isomorphism classes $\{ E_i \mid i \in S \}/\cong$ is finite, then each $E_i$ is étale trivializable.
\end{lem}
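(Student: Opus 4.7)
The plan is to combine pigeonhole on $S$ with the Lange–Stuhler-type statement recorded in Lemma \ref{Lange-Stuhler}. The key observation is that the structure isomorphisms $\sigma_k\colon F_X^*E_k\xrightarrow{\cong}E_{k-1}$ of an $F$-divided sheaf stack up to an isomorphism $E_n\cong (F_X^*)^{m-n}E_m$ for every $n\leq m$. So any coincidence of isomorphism classes of the $E_i$'s for two distinct indices translates, via these structure isomorphisms, into a Frobenius-periodicity statement for one of the bundles.

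First I would record the formula $E_n\cong (F_X^*)^{m-n}E_m$ explicitly. Next, since $S$ is infinite but $\{E_i\mid i\in S\}/\!\cong$ is finite, the pigeonhole principle gives an isomorphism class that occurs infinitely many times along $S$. In particular, for any prescribed $n\in\N$, I can choose indices $i,j\in S$ with $n<i<j$ and $E_i\cong E_j$. Combining this with the formula above yields
\[
    (F_X^*)^{j-i}E_j \;\cong\; E_i \;\cong\; E_j,
\]
so $E_j$ is fixed by a positive power of the absolute Frobenius pullback. Applying Lemma \ref{Lange-Stuhler} (Lange–Stuhler) to $E_j$ concludes that $E_j$ is étale trivializable.

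Finally, to descend this to the prescribed index $n$, I would observe that étale trivializability is preserved under Frobenius pullback: if a finite étale surjection $h\colon X'\to X$ trivializes a bundle $\mathcal{E}$, then by naturality of the absolute Frobenius ($F_X\circ h=h\circ F_{X'}$) the same $h$ trivializes $F_X^*\mathcal{E}$. Iterating, $(F_X^*)^{j-n}E_j$ is étale trivializable, and through the structure isomorphisms this bundle is canonically identified with $E_n$. Since $n$ was arbitrary, every $E_n$ is étale trivializable.

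\textbf{Expected main obstacle.} There is no genuine obstacle: the argument is essentially one paragraph of pigeonhole plus one invocation of Lemma \ref{Lange-Stuhler}. The only minor verification is the preservation of étale trivializability under $F_X^*$, which follows formally from the functoriality of the absolute Frobenius with respect to étale morphisms.
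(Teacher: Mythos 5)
Your proof is correct and follows essentially the same route as the paper's: pigeonhole on the finitely many isomorphism classes to produce indices $i<j$ in $S$ with $E_i\cong E_j$, hence $E_j\cong (F_X^*)^{j-i}E_j$, then Lemma \ref{Lange-Stuhler}, and finally the observation that étale trivializability is preserved under Frobenius pullback to reach every index. The only cosmetic difference is that the paper phrases the pigeonhole step via a threshold $n_0$ past which every class has already appeared, whereas you extract a single class occurring infinitely often; both yield the same periodicity relation.
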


\begin{proof} 
    If the set $\{ E_i \mid i \in S \} / {\cong}$ is finite, then there exists an integer $n_0$ such that for all $j \geq n_0$ with $j \in S$, we have $E_j \cong E_i$ for some $i < n_0$ with $i \in S$. Thus $E_j \cong E_i \cong (F_X^*)^{j-i} E_j$, so by Lemma \ref{Lange-Stuhler}, $E_j$ is étale trivializable. Finally, observe that if $E_j$ is étale trivializable, then so is $(F_X^*)^m E_j$ for any $m \in \mathbb{N}$. Since every $E_i$ with $i < n_0$ is a Frobenius pullback of some $E_j$ with $j \geq n_0$ and $j \in S$, the claim follows.
\end{proof}

\subsection{Local criterion of flatness} Grothendieck’s local criterion of flatness (see \cite[\S~11.3]{EGAIV3} or \cite[\href{https://stacks.math.columbia.edu/tag/00HT}{Tag 00MK}]{stacks-project} or \cite[20.E]{Mat-CA}) gives a powerful method to verify flatness by checking conditions on fibers.  
Roughly speaking, it states that a finitely generated module $M$ over a Noetherian local ring $(A,\mathfrak m)$ is flat if and only if 
\(\Tor_1^A(M, A/\mathfrak m) = 0.
\)
Equivalently, flatness can be tested after reduction modulo the maximal ideal.  
This principle underlies many geometric applications: exactness and base change can often be checked fiberwise, and the set of points where a coherent sheaf is flat is open in the base.  
We record here two consequences that will be used later.

\begin{lem}\label{local criterion of flatness} 
Let $X$ be a Noetherian scheme, and consider a sequence of vector bundles on $X$
\begin{equation}\label{exact sequence of vector bundles}
    0 \longrightarrow \sF' \xrightarrow{\;\; u \;\;} \sF \xrightarrow{\;\; w \;\;} \sF'' \longrightarrow 0.
\end{equation}
\begin{enumerate}[label={\rm(\arabic*)}]
    \item If $u$ is an isomorphism on the residue field of a point $x \in X$ 
    \textup{(i.e.\ $u \otimes_{\sO_{X,x}} \kappa(x)$ is an isomorphism)}, then $u$ is an
    isomorphism in an open neighborhood of~$x$.
    \item Suppose $f\colon Y \to X$ is a surjective morphism of schemes, with $X$ reduced.  
    If the pullback of \eqref{exact sequence of vector bundles} along $f$ is exact, then 
    \eqref{exact sequence of vector bundles} is exact.
\end{enumerate}
\end{lem}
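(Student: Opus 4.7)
For (1) the argument is routine. Since $u\otimes\kappa(x)$ is an isomorphism, $\sF'$ and $\sF$ have the same rank at $x$, so on an affine neighborhood $U$ of $x$ both trivialize as $\sO_U^r$; then $u$ is multiplication by an $r\times r$ matrix $M$ with entries in $\sO_U$ satisfying $\det M(x)\neq 0$. The open locus $\{\det M\neq 0\}\subseteq U$ contains $x$, and on it $u$ is an isomorphism.

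For (2), I verify in turn that $w$ is surjective, that $wu=0$, that $u$ is injective, and that $\ker(w)$ equals the image of $u$. The workhorse at each step is the following descent statement: \emph{if $\sG$ is a coherent sheaf on $X$ with $f^*\sG=0$ and $f$ is surjective, then $\sG=0$}. For $y\in Y$ with $x=f(y)$, the vanishing of $\sG_x\otimes_{\sO_{X,x}}\sO_{Y,y}$ implies, after tensoring with $\kappa(y)$ and using faithful flatness of the field extension $\kappa(x)\hookrightarrow\kappa(y)$, that $\sG(x)=0$; Nakayama gives $\sG_x=0$, and surjectivity of $f$ does the rest (no reducedness needed so far). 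Applying this to the cokernel of $w$, which by right-exactness of $f^*$ pulls back to $\mathrm{coker}(f^*w)=0$, yields the surjectivity of $w$.

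Reducedness enters when proving $wu=0$ and the injectivity of $u$. Both reduce, after local trivialization of the target vector bundle, to the claim that a regular function $a\in\sO_X$ with $a\circ f=0$ on $Y$ must itself be zero: such $a$ vanishes at every point of $f(Y)=X$, hence lies in $\bigcap_{x\in X}\mathfrak{m}_x=\mathrm{nil}(\sO_X)=0$. For $wu=0$, apply this to the matrix entries of the composite $wu$ after trivializing $\sF''$ and $\sF'$ locally. For injectivity of $u$, apply it componentwise to any local section of $\sF'$ annihilated by $u$, whose pullback to $Y$ vanishes by injectivity of $f^*u$.

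The subtle point is the middle exactness, because kernels do not commute with $f^*$. I sidestep this by working with the cokernel instead: set $\sC\coloneqq\sF/u(\sF')$, so $w$ descends to $\bar w\colon\sC\to\sF''$ thanks to $wu=0$, and exactness of the original sequence is equivalent to $\bar w$ being an isomorphism. Right-exactness of $f^*$ applied to $0\to\sF'\to\sF\to\sC\to 0$, combined with the exactness of the pullback of \eqref{exact sequence of vector bundles}, identifies $f^*\bar w$ as an isomorphism. Since $\bar w$ is already surjective, its kernel $K$ fits into $0\to K\to\sC\to\sF''\to 0$; because $\sF''$ is a vector bundle and hence flat, $f^*$ preserves exactness here, yielding $f^*K=\ker(f^*\bar w)=0$. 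The descent statement then forces $K=0$, completing the proof.
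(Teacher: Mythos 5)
Your proof is correct, and while it shares the paper's skeleton for part (2) --- namely factoring $w$ through $\Coker(u)$ and reducing exactness to the induced map $\bar w\colon\Coker(u)\to\sF''$ being an isomorphism --- the mechanism you use at the decisive step is genuinely different. The paper checks that $\bar w$ is an isomorphism on every residue field and then invokes part (1) (i.e.\ the local criterion of flatness, Stacks 00ME), and likewise gets injectivity of $u$ from 00ME; your argument never uses part (1) in part (2). Instead, your workhorse is the Nakayama-based descent statement that a coherent sheaf with vanishing pullback along a surjection is zero, combined with the observation that flatness of $\sF''$ kills the relevant $\Tor_1$, so that $\Ker(\bar w)$ commutes with $f^*$ and can be tested after pullback; injectivity of $u$ you obtain by a direct section-level argument from reducedness. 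Both routes are valid. The paper's has the aesthetic advantage of making (2) a corollary of (1) and of only needing the fiberwise data at points of $X$, while yours isolates cleanly where reducedness is actually needed ($wu=0$ and injectivity of $u$, but not surjectivity of $w$ or the vanishing descent) and replaces the appeal to the local flatness criterion in (2) by elementary Nakayama and $\Tor$ arguments. Your proof of (1) via the determinant of a transition matrix is also more elementary than the paper's appeal to 00ME plus openness of the loci $\Ker(u)_x=\Coker(u)_x=0$, and is perfectly adequate.
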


\begin{proof}
First, assume $X = \Spec(A)$ with $A$ a local ring.  
If $u$ is an isomorphism on the residue field of the closed point of $X$, then by 
\cite[\href{https://stacks.math.columbia.edu/tag/00ME}{Tag 00ME}]{stacks-project}, 
$u$ is injective and $\Coker(u)$ is free.  
Comparing ranks shows that $\Coker(u)$ has rank zero, hence $u$ is an isomorphism.  
Since the conditions $\Ker(u)_x = \Coker(u)_x = 0$ are open in $X$, the conclusion of (1) follows for general $X$.  
For (2), note that $w \circ u = 0$ at each generic point of $X$, hence $w \circ u = 0$ globally as $X$ is reduced.  
Thus $w$ factors uniquely through a morphism $\alpha \colon \Coker(u) \to \sF''$.  
Because tensor products are right exact, $\alpha$ is an isomorphism after base change to each residue field.  
Applying (1) to $u = \alpha$, we deduce that \eqref{exact sequence of vector bundles} is right exact.  
Finally, by \cite[\href{https://stacks.math.columbia.edu/tag/00ME}{Tag 00ME}]{stacks-project}, $u$ is injective, so the sequence \eqref{exact sequence of vector bundles} is exact.
\end{proof}

\begin{lem}\label{the set of flat points is open} 
Let $f \colon X \to S$ be a proper surjective morphism of $J$-2 schemes, and let $\sF$ be a coherent sheaf on $X$ flat over $S$.  
Then the set of points
\[
U \coloneqq \{\, s \in S \mid \sF \text{ is flat over } X_s \,\}
\]
is open in $S$.
\end{lem}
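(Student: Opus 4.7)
The plan is to reduce the openness of $U$ in $S$ to the openness of the flat locus of $\sF$ on the total space $X$, and then invoke properness of $f$ to close up the complement.

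First, I would apply the relative local criterion of flatness (cf.~Lemma~\ref{local criterion of flatness} and the references cited there). Since $\sF$ is already flat over $S$, for any point $x \in X$ lying over $s = f(x)$ the criterion yields the equivalence
\[
\sF_x \text{ is flat over } \sO_{X,x} \quad\Longleftrightarrow\quad (\sF|_{X_s})_x \text{ is flat over } \sO_{X_s,x}.
\]
Consequently, if I set $V \coloneqq \{\, x \in X : \sF_x \text{ is flat over } \sO_{X,x}\,\}$, then $\sF|_{X_s}$ is flat over $X_s$ precisely when $X_s \subseteq V$, so that $U = \{\, s \in S : X_s \subseteq V \,\}$, or equivalently
\[
S \setminus U = f(X \setminus V).
\]

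Next, I would appeal to the standard fact that for a coherent sheaf on a locally Noetherian scheme, the flat locus coincides with the locally free locus -- a finitely generated flat module over a Noetherian local ring is free -- and this locus is manifestly open. Hence $V$ is open in $X$, and $X \setminus V$ is closed.

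Finally, since $f$ is proper, hence a closed map, the image $f(X \setminus V)$ is closed in $S$, so $U = S \setminus f(X \setminus V)$ is open. The whole argument is short; the one subtlety worth flagging is the correct invocation of the relative local criterion of flatness, which is precisely what allows a question about flatness on each fibre $X_s$ to be transferred to a question about flatness on $X$ itself. The $J$-$2$ hypothesis does not appear to intervene in this proof; it seems to be included for uniformity with the geometric setting used elsewhere in the paper.
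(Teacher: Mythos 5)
Your proof is correct and follows essentially the same route as the paper's: both arguments use the fibral flatness criterion (\cite[\S 11.3.10]{EGAIV3}) to identify $U$ with the complement of the image of the closed non-flat locus of $\sF$ on $X$, and then invoke properness of $f$ to conclude that this image is closed. Your side remark is also accurate: beyond supplying local Noetherianity, the $J$-$2$ hypothesis plays no role in either argument.
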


\begin{proof}
If $U = \emptyset$, the claim is trivial.  
Suppose $s \in U$.  
We want to show that there exists an open neighborhood $V \subseteq S$ of $s$ such that $\sF|_{X_t}$ is flat for all $t \in V$.  

Let $Y \subseteq X$ be the closed subset where $\sF$ is not flat, and set $V \coloneqq S \setminus f(Y)$.  
By \cite[\S 11.3.10]{EGAIV3}, $\sF$ is flat at $x \in X$ if and only if $\sF|_{X_{f(x)}}$ is flat at $x \in X_{f(x)}$.  
Hence $\sF|_{X_t}$ is flat for all $t \in V$, since $\sF|_{f^{-1}(V)}$ is flat.  
This proves the claim.
\end{proof}

\subsection{Purely inseparable maps}

In positive characteristic, finite morphisms often decompose into a separable and a purely inseparable part. 
To isolate the latter, we recall Grothendieck’s notion of \emph{universally injective} (or \emph{radical}) morphisms, 
which admit several equivalent characterizations:

\begin{enumerate}
    \item For every field $K$ the induced map
        \resizebox{5.5cm}{!}{$\Hom(\Spec(K),Y)\to\Hom(\Spec(K),X)$} is injective;
    \item For any morphism $X'\to X$, the base change $f'\colon Y_{X'}\to X'$ is
        injective;
    \item The map $f$ is injective and for every $y\in Y$,
        $\kappa(y)/\kappa(f(y))$ is purely inseparable;
    \item The diagonal map $\Delta_{Y/X}\colon Y\to Y\times_XY$ is surjective.
\end{enumerate}

The following theorem of Grothendieck shows that radical morphisms are invisible to étale topology:

\begin{thm}[Grothendieck {\cite[Exposé VIII, Thm.~1.1]{SGA4}}]\label{Grothendieck} 
Let $f\colon Y\to X$ be a radical surjective integral map of
schemes. Then the base change functor $f^*\colon X_\et\to Y_\et$ induces an
equivalence of small étale sites. In particular, if $X,Y$ are
connected, and $\xi'$ is a geometric point of $Y$ with image $\xi=f(\xi')$, 
then the induced map $\pi_1^\et(Y,\xi')\to\pi_1^\et(X,\xi)$ is an
isomorphism.
\end{thm}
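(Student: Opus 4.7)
The plan is to deduce the claim from the topological invariance of the small étale site under universal homeomorphisms, which is the content of SGA~4, Exposé VIII. The first step is to verify that a radical, surjective, integral morphism $f\colon Y\to X$ is in fact a universal homeomorphism. Integrality gives universal closedness (integral morphisms are universally closed), while the combination of radical and surjective yields universal bijectivity on points, both properties being stable under arbitrary base change. A universally closed, universally bijective continuous map is a universal homeomorphism.

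The second step is to establish that for a universal homeomorphism $f$, the pullback $f^*\colon X_\et\to Y_\et$ is an equivalence. For full faithfulness, let $U_1,U_2$ be étale $X$-schemes with pullbacks $V_i\coloneqq U_i\times_X Y$; a $Y$-morphism $V_1\to V_2$ corresponds to a section of the étale separated morphism $V_1\times_Y V_2\to V_1$, hence to a clopen subscheme projecting isomorphically to $V_1$. Under the universal homeomorphism $V_1\times_Y V_2=(U_1\times_X U_2)\times_X Y\to U_1\times_X U_2$, clopens correspond bijectively, producing a section of $U_1\times_X U_2\to U_1$, i.e.\ the desired $X$-morphism $U_1\to U_2$. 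For essential surjectivity, given étale $V\to Y$, one works Zariski-locally: étale morphisms admit standard presentations $\Spec(\sO[t_1,\dots,t_n]/(g_1,\dots,g_n))$ with invertible Jacobian, and the universal homeomorphism allows one to lift such a presentation from $Y$ to $X$; the rigidity of étale morphisms under purely inseparable thickenings then ensures the local lifts glue to an étale $X$-scheme $U$ with $U\times_X Y\cong V$.

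The third step is the conclusion on fundamental groups: restricting the established equivalence to finite étale covers identifies the Galois categories of $X$ and $Y$ at compatible geometric base points $\xi=f(\xi')$, and by the recognition theorem for Galois categories this translates into the isomorphism $\pi_1^\et(Y,\xi')\xrightarrow{\cong}\pi_1^\et(X,\xi)$.

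The main obstacle is the essential surjectivity in the second step. Since $f$ is generally not flat, flat descent is unavailable, and one must instead exploit the rigidity of étale morphisms, treating $f$ as a generalized ``thickening'' in the sense of universal homeomorphisms. This local-to-global gluing of étale descents is the technical heart of the argument; the full faithfulness and the transition to the fundamental group are essentially formal once this step is in place.
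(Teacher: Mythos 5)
The paper does not prove this statement at all: it is quoted verbatim from SGA~4, Exposé~VIII, Théorème~1.1, so there is no in-paper argument to compare against. Your proposal reconstructs the standard proof strategy of that reference, and two of its three steps are sound. Step~1 is correct: integral morphisms are universally closed, surjectivity and radiciality are stable under base change, and a universally closed, universally bijective morphism is a universal homeomorphism. Step~3 is also fine: once the equivalence of small étale sites is in hand, restricting to finite étale objects identifies the two Galois categories over compatible base points, and the isomorphism of fundamental groups follows from the formalism of Galois categories. The full-faithfulness half of Step~2 works as you describe, modulo the minor point that sections of an unramified morphism are only \emph{open} immersions in general and are clopen when the target is separated over the base; this is harmless because one may reduce to the affine case.

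The genuine gap is exactly where you locate the ``technical heart'': essential surjectivity of $f^*$. Your argument there is circular in its key sentence --- ``the universal homeomorphism allows one to lift such a presentation from $Y$ to $X$'' is a restatement of the theorem, not a proof of it. The infinitesimal rigidity of étale morphisms handles lifting along nilpotent thickenings, but a radical surjective integral morphism is in general very far from a thickening: consider $\Spec(K)\to\Spec(k)$ for a purely inseparable extension $K/k$ of infinite degree, or the normalization of a cuspidal curve in characteristic zero. In these cases there is no nilpotent ideal to induct on, and lifting an étale algebra requires a separate dévissage: one reduces to finitely presented data by a limit argument, factors the morphism through finite radicial pieces, and for purely inseparable extensions uses that a power of the relative Frobenius factors through $f$, so that $f^*$ is sandwiched between two functors already known to be equivalences (this is essentially the same trick the paper uses for $\Fdiv$ in Proposition~\ref{purely inseparable maps and Fdiv}). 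Without some such argument the local lifts you invoke do not exist, and the gluing step has nothing to glue. As written, the proposal establishes full faithfulness and the formal consequences but leaves the substantive half of the equivalence unproved.
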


\noindent
A similar principle holds for $F$-divided sheaves. 
Indeed, B.~Bhatt has shown in unpublished notes that $\Fdiv(-)$ satisfies $h$-descent. 
In particular, if $f$ is a finite radical surjective morphism of
schemes of finite type over a perfect field $k$, then 
the pullback $f^*\colon \Fdiv(X)\to\Fdiv(Y)$ is an equivalence. 
For the purposes of this paper, it suffices to work with the following special case:

\begin{defn}\label{nilpotent maps} 
Suppose that $X$ is a scheme over $\F_p$. 
A morphism $f\colon Y\to X$ is called \emph{purely inseparable} if it is affine and for each open affine
$U=\Spec(A)\subseteq X$ the induced map $f^{-1}(U)=\Spec(B)\to U=\Spec(A)$ corresponds to a finite injective ring map $A\hookrightarrow B$ such
that for any $b\in B$ there exists $n\in\N^+$ with $b^{n}\in A$. 
\end{defn}

\begin{lem}\label{Setting 2.25 is radical} 
A purely inseparable map is integral, radical and surjective.
\end{lem}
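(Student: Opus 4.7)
The plan is to verify the three assertions — integral, surjective, and radical — in that order, with the radical part being the real content. Integrality is immediate from the definition: each affine-local restriction $A \hookrightarrow B$ is by hypothesis finite, hence integral, so $f$ is integral on the nose. Surjectivity is lying-over for integral extensions applied over an affine cover: every prime of $A$ is the contraction of some prime of $B$, and the statements glue to show that $f$ is surjective.

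For radicality I would verify characterization~(3) from the equivalent formulations listed just above, namely that $f$ is injective on points and that each residue field extension $\kappa(y)/\kappa(f(y))$ is purely inseparable; both conditions are affine-local, so I fix $A \hookrightarrow B$ as in the definition. For set-theoretic injectivity of $\Spec(B)\to\Spec(A)$, I would take two primes $\mathfrak q_1, \mathfrak q_2 \subseteq B$ with $\mathfrak q_i \cap A = \mathfrak p$. For any $b \in \mathfrak q_1$ the hypothesis produces some $p$-power $b^{p^n} \in A$, whence $b^{p^n} \in \mathfrak q_1 \cap A = \mathfrak p \subseteq \mathfrak q_2$, and primality of $\mathfrak q_2$ forces $b \in \mathfrak q_2$; swapping roles gives $\mathfrak q_1 = \mathfrak q_2$. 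For pure inseparability of $\kappa(\mathfrak q)/\kappa(\mathfrak p)$, a class there can be represented by $b/s$ with $s \in B \setminus \mathfrak q$; choosing $N$ so that both $b^{p^N}, s^{p^N} \in A$ (and noting $s^{p^N} \notin \mathfrak p$, since otherwise $s \in \mathfrak q$), one concludes that $(b/s)^{p^N}$ lies in the image of $\kappa(\mathfrak p) \to \kappa(\mathfrak q)$, which is exactly what pure inseparability requires.

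I do not expect any genuine obstacle here: the whole argument is a short piece of commutative algebra, essentially lying-over packaged together with a residue-field computation. The only thing to be mindful of is that the exponents supplied by the hypothesis should be taken to be powers of $p$ throughout, so that the displayed computations at primes and residue fields descend cleanly to the purely inseparable conclusion appropriate in characteristic $p$; with that reading in place, both the collapse of primes over a common prime of $A$ and the pure inseparability of the induced residue field extensions fall out directly, and the lemma is established.
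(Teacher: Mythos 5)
Your proof is correct, but it takes a different route from the paper's on both of the nontrivial points. For radicality, the paper verifies characterization (1) of universally injective morphisms: given two $K$-valued points $y,y'$ of $\Spec(B)$ agreeing on $A$, one has $y(b)^{p^n}=y'(b)^{p^n}$ for suitable $n$, and injectivity of the $p^n$-power map on a field of characteristic $p$ forces $y=y'$. You instead verify characterization (3), splitting the work into set-theoretic injectivity of $\Spec(B)\to\Spec(A)$ (collapsing primes over a common $\mathfrak p$ via $b^{p^n}\in\mathfrak q_1\cap A\subseteq\mathfrak q_2$) and pure inseparability of the residue field extensions (the $b/s$ computation). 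Both arguments are valid and of comparable length; the paper's is a single uniform computation, while yours is more explicit about what happens at primes and residue fields, which is information one sometimes wants downstream. For surjectivity, you invoke lying-over for integral extensions, whereas the paper gives a direct argument that $\mathfrak pB\neq B$ by raising a relation $1=\sum a_ib_i$ to a $p$-power; again both are fine. Finally, you are right to flag that the exponents in Definition~\ref{nilpotent maps} must be read as powers of $p$: with arbitrary exponents the statement fails (e.g.\ $\F_p\subseteq\F_{p^2}$, where every element satisfies $b^{p^2-1}\in\F_p$ yet the extension is separable), and the paper's own proof tacitly makes the same reading when it writes $1=1^{p^n}$ and asserts that $y(b)^n=y'(b)^n$ is equivalent to $y(b)=y'(b)$.
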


\begin{proof} 
We want to prove that the map
$\Hom(\Spec(K),Y)\to\Hom(\Spec(K),X)$ is injective for every field $K$.
Equivalently, for any two maps $y,y'\in\Hom(\Spec(K),Y)$, if
their compositions with $Y\to X$ agree, then $y=y'$. 
We may assume $X=\Spec(A)$, $Y=\Spec(B)$, with $A\hookrightarrow B$
a finite injective map such that every $b\in B$ satisfies $b^n\in A$ for some $n$. 
Now in the field $K$, equality $y(b)=y'(b)$ is equivalent to $y(b)^n=y'(b)^n$ for some $n>0$, 
which holds since $y,y'$ agree on $A$. Hence $y=y'$. For the surjectivity, one just has to note that for any 
prime ideal $\mathfrak{p}\subseteq A$, $\mathfrak{p}B\neq B$ -- otherwise $1=1^{p^n}\in \mathfrak{p}$ for some $n$.
\end{proof}

\begin{prop}\label{purely inseparable maps and Fdiv} 
Let $f\colon Y\to X$ be a purely inseparable map. 
Then the pullback functor $f^*\colon \Fdiv(X)\to\Fdiv(Y)$ induces an equivalence of categories.
\end{prop}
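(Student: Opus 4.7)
The plan is to exhibit an explicit quasi-inverse to $f^{*}$ by exploiting the fact that a sufficiently high power of the absolute Frobenius on $Y$ factors through $f$. The key algebraic input is that in the local setting $A\hookrightarrow B$ of Definition \ref{nilpotent maps}, there exists an integer $N$ with $b^{p^{N}}\in A$ for every $b\in B$. Indeed, the set $J_{n}\coloneqq\{b\in B\mid b^{p^{n}}\in A\}$ is an $A$-submodule of $B$ (closure under addition follows from the characteristic-$p$ identity $(b+c)^{p^{n}}=b^{p^{n}}+c^{p^{n}}$, closure under $A$-scaling from $(ab)^{p^{n}}=a^{p^{n}}b^{p^{n}}$), the ascending chain $J_{0}\subseteq J_{1}\subseteq\cdots$ exhausts $B$ by hypothesis, and Noetherianness of $B$ as an $A$-module forces it to stabilize.

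Consequently $F_{B}^{N}\colon B\to B$, $b\mapsto b^{p^{N}}$, factors as $B\xrightarrow{\;\phi\;}A\xrightarrow{\;\iota\;}B$, and one checks directly that $\iota\circ\phi=F_{B}^{N}$ and $\phi\circ\iota=F_{A}^{N}$. Passing to $\Spec$, and gluing over a finite affine cover of $X$ (the local $g$'s patch automatically because they are canonical), one obtains a morphism $g\colon X\to Y$ satisfying
\[
g\circ f=F_{Y}^{N},\qquad f\circ g=F_{X}^{N}.
\]

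With $g$ in hand I define $G\colon\Fdiv(Y)\to\Fdiv(X)$ by
\[
(E_{i},\sigma_{i})_{i\in\N}\;\longmapsto\;\bigl(g^{*}E_{i+N},\,\tilde{\sigma}_{i}\bigr)_{i\in\N},
\]
where $\tilde{\sigma}_{i}\colon F_{X}^{*}g^{*}E_{i+N+1}\xrightarrow{\;\cong\;}g^{*}E_{i+N}$ combines the canonical commutation $F_{X}^{*}g^{*}\cong g^{*}F_{Y}^{*}$ (the absolute Frobenius is natural in every morphism of schemes) with $g^{*}\sigma_{i+N+1}$. Using the two Frobenius factorizations and iterating the transition maps, one computes
\[
f^{*}G(E_{\bullet})=\bigl((F_{Y}^{N})^{*}E_{i+N}\bigr)_{i}\cong(E_{i})_{i},\qquad Gf^{*}(D_{\bullet})\cong(D_{i})_{i},
\]
with both isomorphisms natural in their arguments. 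Hence $G$ is a quasi-inverse to $f^{*}$, establishing that $f^{*}$ is an equivalence.

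The main obstacle is not conceptual but notational: verifying that the natural isomorphisms produced by iterated transition maps assemble into honest natural transformations of functors requires a diagram chase through Frobenius pullbacks that is tedious to write out in full. Once the Frobenius factorization $g\circ f=F_{Y}^{N}$, $f\circ g=F_{X}^{N}$ is secured---it encodes the whole geometric content of the purely inseparable hypothesis---the rest of the argument is purely formal.
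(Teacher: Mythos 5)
Your proof is correct and follows essentially the same route as the paper's: both rest on producing the lift $g\colon X\to Y$ with $g\circ f=F_Y^{N}$ and $f\circ g=F_X^{N}$, and both conclude because the composite pullbacks are shift equivalences on $\Fdiv$ --- you simply make the quasi-inverse explicit where the paper invokes the two-out-of-six property, and your ascending-chain argument supplies the uniform $N$ that the paper leaves implicit in ``for $n\gg 0$.'' One caveat you share with the paper: the exhaustion $\bigcup_{n}J_{n}=B$ requires every $b\in B$ to have a $p$-power power in $A$, which is what Definition~\ref{nilpotent maps} is evidently intended to say, although as literally written it only demands $b^{n}\in A$ for some $n\in\N^{+}$ (a condition satisfied, e.g., by $\F_p\subseteq\F_{p^2}$, for which the conclusion fails).
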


\begin{proof} 
Since $\Fdiv(-)$ satisfies Zariski descent, it suffices to treat the affine case 
$X=\Spec(A)$, $Y=\Spec(B)$. 
For $n\gg 0$, we have a lift (dashed arrow) in the commutative diagram
\[
\xymatrix{
    Y \ar[rr]^{F_Y^n} \ar[d]_{f} && Y \ar[d]^{f} \\
    X \ar[rr]_{F_X^n} \ar@{-->}[urr]^g && X
}
\]
Both composites $f^*\circ g^*\colon\Fdiv(Y)\to\Fdiv(Y)$ and 
$g^*\circ f^*\colon\Fdiv(X)\to\Fdiv(X)$ act as ``shifts,'' hence are equivalences. 
It follows that $f^*$ itself is an equivalence.
\end{proof}

The next lemma is a relative version of the classical fact that any finite field extension 
factors into a separable and a purely inseparable part:

\begin{lem}\label{Frobenius factorization}
Let $f\colon Y\to X$ be a surjective morphism of integral schemes of finite type over a field
$k$, with $Y$ normal. 
There exists a factorization 
\[
F_Y^n\colon Y \xrightarrow{\hspace{5pt} u\hspace{5pt}} Y' \xrightarrow{\hspace{5pt}
v\hspace{5pt}} Y
\]
for some $n\in\N^+$, where $Y'$ is a normal variety separable over $X$, 
and $u,v$ are finite purely inseparable morphisms.
\end{lem}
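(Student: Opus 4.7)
My plan is to work at the function-field level and then realize the construction inside $\sO_Y$. Set $K := K(Y)$ and $F := K(X)$. By the structure theorem for finitely generated extensions in characteristic $p$, there exists a subfield $F \subseteq L \subseteq K$ such that $L/F$ is separable (in the sense of Stacks Tag 030I) and $K/L$ is finite purely inseparable; in particular, we may pick $n \in \mathbb{N}^+$ with $K^{p^n} \subseteq L$. On each affine open $\Spec A \subseteq Y$ I set $A' := A \cap L$. The containments $A^{p^n} \subseteq A' \subseteq A$ are immediate, and by normality of $A$ one identifies $A'$ with the integral closure of $A^{p^n}$ inside $L$. A short calculation (given $x = a/b \in L$ with $a,b \in A$, multiply numerator and denominator by $b^{p^n - 1}$ so that the denominator $b^{p^n} \in A^{p^n} \subseteq L$) shows moreover that $\mathrm{Frac}(A') = L$.

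Because $k$ is $F$-finite in the paper's setting, $A$ is a finite $A^{p^n}$-module, and hence $A'$, as an $A^{p^n}$-submodule of $A$, is also finite over $A^{p^n}$. Thus $A'$ is a finitely generated normal $k$-algebra with fraction field $L$, and the local pieces glue canonically to a normal variety $Y'$ of finite type over $k$. I would then define $u \colon Y \to Y'$ from the inclusion $A' \hookrightarrow A$, and $v \colon Y' \to Y$ from the Frobenius map $A \to A'$, $a \mapsto a^{p^n}$, which is well-defined since $A^{p^n} \subseteq A'$. A one-line computation gives $u^* \circ v^*(a) = a^{p^n}$, so $v \circ u = F_Y^n$. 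Both $u$ and $v$ are finite and purely inseparable in the sense of Definition~\ref{nilpotent maps}, since every element of the larger ring has a $p^n$-th power in the smaller one.

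Finally, to produce the separable morphism $Y' \to X$, observe that locally the structure ring $R$ of $X$ satisfies $R \subseteq F \cap A \subseteq L \cap A = A'$ inside $K$. The inclusion $R \hookrightarrow A'$ then gives a morphism $Y' \to X$ whose induced function-field extension is the tautological inclusion $F \subseteq L$, separable by construction of $L$. The main technical obstacle I anticipate is the finite-generation of $A' = A \cap L$ over $k$; this reduces to $F$-finiteness of the base field (giving finiteness of $A$ over $A^{p^n}$), combined with normality of $A$ for the identification with the integral closure. Once those are in place, the purely-inseparable conditions and the global gluing are routine to verify.
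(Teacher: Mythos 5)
Your proof is correct, and it reaches the factorization by a different construction than the paper's. The paper forms the base change ${F_X^n}^*Y = Y\times_{X,F_X^n}X$, passes to its reduction (whose function field is separable over $K(X)$ once $n\gg 0$ --- this is the same elementary field-theoretic input as your existence of $L$, viewed through a Frobenius twist), and defines $Y'$ as the normalization of that reduction; $v$ is then the projection to $Y$, and $u$ comes from the universal property of normalization together with the normality of $Y$. You instead sit $Y'$ inside $Y$ rather than over a Frobenius pullback of $Y$: you take $L=K(X)\cdot K(Y)^{p^n}$ (or any intermediate field with $L/K(X)$ separable and $K(Y)/L$ finite purely inseparable) and realize $Y'$ affine-locally as $A\cap L$, i.e.\ as the integral closure of $A^{p^n}$ in $L$. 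The payoff of your route is that finiteness, pure inseparability in the sense of Definition~\ref{nilpotent maps}, the identity $v\circ u=F_Y^n$, and the separable structure map $Y'\to X$ are all visible directly on rings; the cost is that you must verify finite generation of $A\cap L$ and the compatibility of the local pieces under localization yourself, which you correctly reduce to $F$-finiteness of $k$ and normality of $A$, and which do go through. Note that the appeal to $F$-finiteness is not a weakness of your argument relative to the paper's: since $v\circ u=F_Y^n$, finiteness of $u$ and $v$ already forces $F_Y^n$ to be finite, so the lemma implicitly requires $[k:k^p]<\infty$ in any proof; this holds in all of the paper's applications, where $k$ is perfect.
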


\begin{proof} 
The map $f$ induces an extension $K(X)\subseteq K(Y)$ of function fields. 
It is elementary that the reduction of
$K(X)\otimes_{F_{K(X)}^n}K(Y)$ is a separable field extension
(cf.~\cite[\href{https://stacks.math.columbia.edu/tag/030I}{030I}]{stacks-project})
of $K(X)$ for some $n\gg 0$. 
Consider the diagram
\begin{equation}\label{Relative Frobenius}
    \begin{tikzpicture}[xscale=2.0,yscale=1.2,bmr/.pic={\draw
        (0,0)--++(-90:2mm)--++(180:2mm);},baseline={([yshift=-.5ex]current bounding box.center)}]
         \path
     (0,0)     node (F) {${F_X^n}^*Y$}
             +(-43:.4) pic[scale=1,red]{bmr}
             +(0:1.5)  node (star) {$Y$}
             ++(-90:1.5) node (X) {$X$}
             +(0:1.5)  node (Y) {$X$}
             ++(90:2.5) node (Z') {$Y'$}
             +(-180:1.) node (Z) {$Y$};
         \draw[->] (F)--(star);
         \draw[->] (Z')--(star) node[midway,above,scale=.6]{$v$};
         \draw[->] (F)--(X);
         \draw[->] (X)--(Y) node[midway,above,scale=.6]{$F_X^n$};
         \draw[->] (star)--node[midway,right,scale=.6]{$f$}(Y);
         \draw[dashed,->] (Z)--(Z')
         node[midway,above,scale=.6]{$u$};
         \draw[->] (Z')--(F)node[midway,above,scale=.3]{reduction and normalization};
         \draw[->] (Z) to [out=40,in=130]
         node[midway,above,scale=.6]{$F_Y^n$} (star);
         \draw[->] (Z) to
         [out=-90,in=140]node[midway,below,scale=.6]{$f\ $}(X);
    \end{tikzpicture}
\end{equation}
where $Y'$ is the normalization of the reduction $({F_X^n}^*Y)_\red$ in $K(\resizebox{1.45cm}{!}{$({F_X^n}^*Y)_\red$})$.
Since $({F_X^n}^*Y)_\red$ is separable over $X$, so is $Y'$. 
The existence of $u$ follows from the universal property of
normalization and the normality of $Y$. 
As $F_Y^n\colon Y\to Y$ is purely inseparable, both $u$ and $v$ are purely inseparable morphisms.
\end{proof}

\section{The case of $k=\bar{\F}_p$}\label{k the closure of Fp}
In this section we establish Theorems~\ref{ThmI} and~\ref{ThmII} in the case 
$k=\bar{\F}_p$. 
The restriction to this situation is essential, since the key technical input -- Hrushovski’s theorem -- is only available over algebraic closures of finite fields.

Let $f\colon Y\to X$ be a surjective morphism of projective varieties over
$k=\bar{\F}_p$. 
Fix a $k$-rational point $\xi'\in Y$ and set $\xi\coloneqq f(\xi')\in X$. 
For each $r\in\N^+$, the morphism $f$ induces a rational map \resizebox{8cm}{!}{
\(
f^*\colon R_X \coloneqq R(X,\xi,rp_{\sO_X}) \dashrightarrow 
R_Y \coloneqq R(Y,\xi',rp_{\sO_Y})
\)
}.
We regard $f^*$ as an actual morphism, defined on the maximal open subset of $R_X$
where the pullback is well-defined.

\subsection{Full faithfulness}
We begin with the full faithfulness property. 
The key idea is to encode morphisms between $F$-divided sheaves as sheaf $\HOM$s 
over the moduli spaces, thereby translating the problem into a question about points on these representation spaces. 
The geometry of the moduli spaces then allows us to use the known results for étale trivializable (periodic) bundles 
to approximate the corresponding results for $F$-divided sheaves.

\begin{thm}\label{full faithfulness}
    Suppose $f$ induces a surjection
    \(\pi_1^{{\rm \acute{e}t}}(Y,\xi')\twoheadrightarrow\pi_1^{{\rm
    \acute{e}t}}(X,\xi)\), and assume that $X,Y$ are smooth connected. 
    Then for any \resizebox{6.5cm}{!}{$E=(E_i,\sigma_i)_{i\in\N},
    E'=(E_i',\sigma_i')_{i\in\N}\in\Fdiv(X)$}, the pullback map
    \[
    f_{E,E'}^*\colon{\rm Hom}_{\Fdiv(X)}(E,E')\longrightarrow 
    {\rm Hom}_{\Fdiv(Y)}(f^*E,f^*E')
    \]
    is an isomorphism.
\end{thm}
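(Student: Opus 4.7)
The plan is to split the statement into injectivity and surjectivity. Injectivity is essentially formal: since $f$ is surjective and $X$ is integral, any section of a vector bundle on $X$ that vanishes after pullback to $Y$ must vanish at the generic point of $X$, hence everywhere. Writing $\Hom_{\Fdiv(X)}(E,E')$ as the eventually-stable decreasing intersection of subspaces $F_X^{*n}\Hom_X(E_n,E_n')$ inside $\Hom_X(E_0,E_0')$ and applying the above to the relevant $\HOM$-bundles produces injectivity of $f^*_{E,E'}$. The substantive part of the proof is surjectivity; the guiding idea is to recast the problem as a comparison of two coherent sheaves on a moduli subscheme and bootstrap from the known answer at étale-trivializable points.

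Given $(h_i)\colon f^*E\to f^*E'$ in $\Fdiv(Y)$, I would first, by Corollary~\ref{make F-divided sheaves points of the representation space}, choose compatible framings $\beta_i,\beta_i'$ so that $P_i\coloneqq(E_i,\beta_i)$ and $Q_i\coloneqq(E_i',\beta_i')$ become $k$-points of the representation space $R_X$ related by $F_X^*P_{i+1}=P_i$, $F_X^*Q_{i+1}=Q_i$. I then apply Lemma~\ref{Definition of the subscheme N} (with $n=2$) to the sequence $\{(P_i,Q_i)\}\subseteq R_X\times_k R_X$ to obtain an integral closed subscheme $\sN\subseteq R_X\times R_X$ in which $\{(P_i,Q_i)\}$ is dense, on which Frobenius pullback acts dominantly, and, crucially, in which periodic points are dense thanks to $k=\bar{\F}_p$.

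Next I would organize the relevant Hom groups into coherent sheaves on $\sN$. Let $\sU_1,\sU_2$ be the two universal framed sheaves on $(R_X\times R_X)\times X$, restricted to $\sN\times X$, and set
\[
\sA\coloneqq\pi_*\HOM(\sU_1,\sU_2),\qquad
\sB\coloneqq\pi_*\HOM\bigl((\mathrm{id}\times f)^*\sU_1,(\mathrm{id}\times f)^*\sU_2\bigr),
\]
coherent on $\sN$, with a natural adjunction map $\sA\to\sB$. By semicontinuity and cohomology-and-base-change, there is a dense open $U\subseteq\sN$ on which $\sA|_U$ and $\sB|_U$ are locally free with fibers $\Hom_X(E,E')$ and $\Hom_Y(f^*E,f^*E')$ at $(P,Q)$, and the induced fiberwise map is $f^*$. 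At a periodic point of $\sN\cap U$, Lemma~\ref{Lange-Stuhler} makes $E,E'$ étale trivializable, so the surjection $\pi_1^\et(Y)\twoheadrightarrow\pi_1^\et(X)$ together with the Tannakian equivalence \eqref{essentially finite VB, étale} forces $\Hom_X(E,E')\cong\Hom_Y(f^*E,f^*E')$. Because periodic points are dense in $\sN\cap U$, the ranks of $\sA|_U$ and $\sB|_U$ coincide, and the determinant of the bundle map $\sA|_U\to\sB|_U$ is nonzero on a dense open $V\subseteq U$, on which this map is therefore an isomorphism.

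Finally I would extract $g_i$ and extend it. The sequence $\dim\Hom_X(E_j,E_j')$ is nonincreasing in $j$ under Frobenius pullback and hence stabilizes for $j\geq N$; for such $j$, every element of $\Hom_X(E_j,E_j')$ extends uniquely to a system in $\Hom_{\Fdiv(X)}(E,E')$. Density of $\{(P_i,Q_i)\}$ in $\sN$ supplies some index $i\geq N$ with $(P_i,Q_i)\in V$; for such $i$ the fiberwise isomorphism at $(P_i,Q_i)$ produces $g_i\in\Hom_X(E_i,E_i')$ with $f^*g_i=h_i$. Extending $g_i$ to $(g_j)_j$ and comparing with $(h_j)$ at index $i$ then gives $f^*(g_j)=(h_j)$ for all $j$. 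The main obstacle is the third step: carefully identifying the fibers of $\sA,\sB$ with the relevant Hom-spaces via cohomology and base change, establishing rank equality on $U$ by exploiting the density of periodic points together with the Tannakian identification of $\EFinet$, and controlling the iso-locus of $\sA\to\sB$ sharply enough that the approximating sequence $\{(P_i,Q_i)\}$ lands in it for some $i\geq N$.
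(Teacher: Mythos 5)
Your proposal is correct and follows essentially the same route as the paper: realize the $E_i,E_i'$ as framed points of the representation space, apply Lemma~\ref{Definition of the subscheme N} to get an integral subscheme $\sN$ with dense periodic points, compare the two $\HOM$-sheaves on a dense open where they are locally free and compatible with base change, deduce the isomorphism at étale-trivializable points from the $\pi_1^{\et}$-surjection via Lemma~\ref{Lange-Stuhler} and \eqref{essentially finite VB, étale}, and spread it out by density. The only cosmetic deviations are that the paper gets injectivity for free from faithfulness of tensor functors between Tannakian categories (rather than your generic-vanishing argument, which needs the standing surjectivity of $f$), uses Lemma~\ref{local criterion of flatness}(1) in place of your rank-plus-determinant argument for the iso-locus, and leaves the final extension of $g_i$ to a full $F$-divided morphism more implicit than you do.
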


\begin{proof} We'll show that $f^*_{E,E'}$ is bijective.
The pullback functor $f^*\colon \Fdiv(X)\to\Fdiv(Y)$ between Tannakian categories is automatically faithful, hence $f_{E,E'}^*$ is injective. 

For surjectivity:
 By Corollary~\ref{make F-divided sheaves points of the representation space},
 we may identify the stratified bundles $E$ and $E'$ with points of the
 representation space after an inconsequential shift. So we have
 two sequences of $k$-rational points of $R_X$:
       \(\{P_i\}_{i\in\N},\,\{Q_i\}_{i\in\N}\),
       where $P_i\coloneqq(E_i,\beta_i)$, $Q_i\coloneqq(E_i',\gamma_i)$ and
       each pullback 
       \(
       f^*P_i=(f^*E_i,f^*\beta_i)\) (resp. \(
       f^*Q_i=(f^*E_i',f^*\gamma_i)
       \))
       is also a $k$-rational point of $R_{Y,r_1}$ (resp. $R_{Y,r_2}$) with
       $r_1$ (resp. $r_2$) denoting the rank of $E$ (resp. $E'$).
       Lemma~\ref{Definition of the subscheme N} constructs the parameter space
       \resizebox{2.9cm}{!}{\(\sN\subseteq R_{X,r_1}\times_k R_{X,r_2}\)} satisfying
\begin{enumerate}[label={\rm(\arabic*)}]
       \item $(P_i,Q_i)\in\sN(k)$ for infinitely many $i\in\N$;
       \item the set $\{(P_i,Q_i)\mid i\in\N\}\cap\sN(k)$ is dense in $\sN$;
       \item $\Frob\cap\sN(k)$ is dense in $\sN$, where 
       $\Frob\subseteq R_{X,r_1}\times_k R_{X,r_2}(k)$ denotes the set of pairs $(P,Q)$
       with $P,Q$ étale trivializable.
\end{enumerate}
Let $(\sE_i^\univ,\beta_i^\univ)$ be the universal object on $X\times_kR_{X,r_i}$, and let
\(\pr_i\colon R_{X,r_1}\times_k R_{X,r_2}\to R_{X,r_i}\) ($i=1,2$) be the projections. 
Set
\[
\sE\coloneqq(\id_X\times\pr_1)^*\sE_1^\univ|_{X\times_k\sN}, 
\qquad
\sF\coloneqq(\id_X\times\pr_2)^*\sE_2^\univ|_{X\times_k\sN},
\]
and consider the maps
\[
   Y\times_k\sN\xrightarrow{f\times\id} X\times_k\sN\xrightarrow{\pr}\sN.
\]

Let $\sU\subseteq \sN$ be a dense open subset on which both sheaves
\[
\HOM(\sE,\sF),\quad
\HOM((f\times\id)^*\sE,(f\times\id)^*\sF)
\]
are locally free and commute with base change.
For any $\sQ=((V,\beta),(W,\beta'))\in\sU(k)$, the restriction of the adjunction map 
\begin{equation} \label{Ziel map}
    \pr_*\HOM(\sE,\sF)\longrightarrow\pr_*(f\times\id)_*\HOM((f\times\id)^*\sE,(f\times\id)^*\sF)
\tag{$\star$}
\end{equation}
is the canonical map
\begin{equation}\label{canonical map}
        f_{V,W}^*\colon
        \Hom(V,W)\longrightarrow\Hom(f^*V,f^*W).
\tag{$*$}
\end{equation}

\medskip\noindent
\textbf{Special case.}  
If $\sQ\in\Frob\cap\sU(k)$, i.e.\ both $V,W$ are étale trivializable, 
then the surjectivity 
\(\pi^\et_1(Y,\xi')\twoheadrightarrow\pi_1^\et(X,\xi)\)
implies that \eqref{canonical map} is an isomorphism. 
By Lemma~\ref{local criterion of flatness} (1), the locus in $\sU$ where \eqref{Ziel map} is an isomorphism is open. 
Hence, after possibly shrinking $\sU$, we may assume that \eqref{Ziel map} is an isomorphism on all of $\sU$.

\medskip\noindent
\textbf{Density argument.}  
Properties (2) and (3) are inherited by $\sU$, since $\sU\subseteq\sN$ is dense and open. 
If $\sU$ did not satisfy (1), then by (2) it would be a finite union of closed
$k$-points, forcing $\sN=\sU$, which contradicts our assumption (1) on $\sN$. 

\medskip\noindent
\textbf{Conclusion.}  
Thus for every $\sQ=((V,\beta),(W,\beta'))\in\sU(k)$, the map \eqref{canonical map} is an isomorphism. 
In particular, this holds for infinitely many pairs $(P_i,Q_i)$, i.e. there are infinitely many $i\in\N$
   such that the natural map 
   \[
       f_{E_i,F_i}^*\colon\Hom(E_i,F_i)\longrightarrow\Hom(f^*E_i,f^*F_i)
   \]
   is an isomorphism.
This shows that any morphism 
$f^*E \to f^*E'$ in $\Fdiv(Y)$ uniquely descends to a morphism $E\to E'$ in $\Fdiv(X)$.
Thus $f_{E,E'}^*$ is surjective, completing the proof.
\end{proof}

\subsection{Essential surjectivity} We now prove essential surjectivity. Given an $F$-divided sheaf $(E_i', \sigma_i')$ on $Y$, the naive strategy is to use Corollary~\ref{make F-divided sheaves points of the representation space} to realize the $E_i'$ as points on a moduli space $R_Y$ and then lift them to $X$ via the rational pullback $f^*: R_X \dashrightarrow R_Y$.

The core difficulty is that while individual bundles $E_i'$ can often be lifted, assembling these lifts into a coherent $F$-divided sheaf is obstructed. This would be tractable if the fibers of $f^*$ were finite, but this fails in general. For instance, if $f$ is a Nori-reduced $G$-torsor for a finite connected $k$-group scheme $G$, then by \cite[Lemma 2.5]{TZ3}, the fiber over $\mathcal{O}_Y^{\oplus r}$ is in bijection with $r$-dimensional representations of $G$, which is infinite.

Our approach is to first treat the case where $f$ is separable, avoiding this issue, and then use Lemma \ref{Frobenius factorization} to handle the general case. The main complication is that the factorization process temporarily breaks smoothness, requiring additional techniques to handle singular varieties.

\subsubsection{The separable case} In the separable case, the moduli space can
be refined to ensure the pullback map $f^*$ has single fibers, effectively rigidifying the lifting problem and allowing for a canonical choice of lift.

Recall that for $r\in\N^+$, we denote $R_{X,r}\coloneqq R(X,\xi,rp_{\sO_X})$.
For an $n$-tuple $\vec{r}\coloneqq(r_1,\dots,r_n)$, set $R_{X,\vec{r}}\coloneqq R_{X,r_1}\times\cdots\times R_{X,r_n}$.

\begin{lem}\label{object}
   Suppose that $f$ is separable, surjective, and induces an isomorphism on the
   étale fundamental groups.
   Let
   $\Sigma\coloneqq\{Q_i'\}_{i\in\N}\subseteq R_{Y,\vec{r}}(k)$ be a sequence of rational
   points with $F_Y^*Q_{i+1}'=Q_i'$. Then there
   exists an integral locally closed subscheme $\sU'\subseteq R_{Y,\vec{r}}(k)$ such that:
   \begin{enumerate}[label={\rm(\arabic*)}]
       \item $\sU'(k)$ contains $Q_i'$ for infinitely many $i\in\N$;
       \item $\Sigma\cap \sU'(k)$ is dense in $\sU'$;
       \item $\Frob_Y\cap\sU'(k)$ is dense, where
           \[
              \Frob_Y\coloneqq\Set{Q'=(Q^{(1)},\dots,Q^{(n)})\in
              R_{Y,\vec{r}}(k)\mid Q^{(i)}\ \text{is étale trivializable $\forall\, i$}}
           \]
       \item the morphism ${f^*}^{-1}(\sU')\to\sU'$ is finite, purely inseparable, and surjective. 
   \end{enumerate}
\end{lem}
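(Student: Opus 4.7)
The plan is to apply Lemma~\ref{Definition of the subscheme N} to $R_{Y,\vec r}$ and the sequence $\Sigma$ to obtain, as a first candidate, an integral closed subscheme $\sN\subseteq R_{Y,\vec r}$ for which (1), (2), and (3) already hold; the real work is then to refine $\sN$ to a dense open $\sU'\subseteq\sN$ on which the rational map $f^{*}$ additionally enjoys property (4).

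To establish (4), I would study the scheme-theoretic preimage $\sZ\coloneqq(f^{*})^{-1}(\sN)$, regarded as a locally closed subscheme of $R_{X,\vec r}$, together with the restricted morphism $\sZ\to\sN$. The key input is that over each $\Frob_Y$-point $Q\in\sN(k)$ the geometric fiber $\sZ_{Q}$ is a single point: any $P\in R_{X,\vec r}(k)$ with $f^{*}P=Q$ is a framed bundle on $X$ whose pullback to $Y$ is étale trivializable; Lemma~\ref{separable fiber unique} (which uses the separability of $f$ and the surjectivity of $\pi(f)$) then forces the underlying bundle of $P$ to be étale trivializable on $X$, and Corollary~\ref{cor3.2}(C) (using the isomorphism property of $\pi(f)$) pins down $P$ uniquely in terms of $Q$.

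With singleton fibers established over the dense set $\Frob_Y\cap\sN(k)$, I would invoke upper semicontinuity of the number of geometric points in a fiber to produce a dense open $\sU'\subseteq\sN$ on which $\sZ|_{\sU'}\to\sU'$ is surjective with geometrically single-point fibers---hence radical, and in particular quasi-finite and purely inseparable. Properties (1), (2), and (3) are preserved upon restricting $\sN$ to its open subset $\sU'$.

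The main obstacle is upgrading this quasi-finite radical map to a genuinely \emph{finite} one, since $R_{X,\vec r}$ need not be proper and so $\sZ|_{\sU'}\to\sU'$ is not automatically proper. My plan is to apply Zariski's Main Theorem to factor this separated quasi-finite map as an open immersion $\sZ|_{\sU'}\hookrightarrow\overline{\sZ|_{\sU'}}$ followed by a finite surjection $\overline{\sZ|_{\sU'}}\to\sU'$, and then shrink $\sU'$ further by removing the image of the closed complement $\overline{\sZ|_{\sU'}}\setminus\sZ|_{\sU'}$; over the resulting smaller $\sU'$ the preimage $(f^{*})^{-1}(\sU')$ coincides with its ZMT completion, yielding the finite, purely inseparable, and surjective morphism demanded by~(4).
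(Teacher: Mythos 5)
Your proposal follows essentially the same route as the paper: build $\sN$ via Lemma~\ref{Definition of the subscheme N}, show the fibers of ${f^*}^{-1}(\sN)\to\sN$ over $\Frob_Y$-points are singletons using Lemma~\ref{separable fiber unique} and Corollary~\ref{cor3.2}, shrink to a dense open where the map is quasi-finite and then finite (the paper uses \cite[Ch.~III, Ex.~3.7]{Hartshorne77} where you invoke ZMT plus removal of the boundary --- these are interchangeable), and deduce pure inseparability from the density of singleton fibers. Two steps need repair, though. First, your fiber analysis only shows the fiber over $Q\in\Frob_Y$ has \emph{at most} one point; nonemptiness --- which is what ultimately gives dominance and hence surjectivity in (4) --- requires Corollary~\ref{cor3.2}(B) (every étale trivializable framed bundle on $Y$ descends to $X$ because $\pi(f)$ is an isomorphism), which you do not cite. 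Second, ``upper semicontinuity of the number of geometric points in a fiber'' is not a valid principle for a general morphism (the fiber count of a finite separable cover \emph{drops} at branch points, and fibers can become positive-dimensional at special points), so it cannot by itself produce an open locus of single-point fibers. The correct substitute, which the paper uses, is Chevalley's theorem on the constructibility of the locus of zero-dimensional fibers (giving quasi-finiteness over a dense open, since that locus contains the dense set $\Frob_Y$), followed by the observation that once the map is finite between integral schemes, a dense set of singleton fibers forces the separable degree of the function field extension to be $1$, i.e.\ pure inseparability; this also requires the small argument (present in the paper, absent from your sketch) that the source has a unique generic point dominating $\sU'$. With those two repairs your argument closes and coincides with the paper's.
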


\begin{proof}
Let $\sN'\subseteq R_{Y,\vec{r}}(k)$ be the closed integral subscheme obtained via
Lemma~\ref{Definition of the subscheme N}, and set $\sN\coloneqq {f^*}^{-1}(\sN')$.  
By Corollary~\ref{cor3.2}(B), the periodic points of $\sN'$ are mapped onto,
so $f^*:\sN \to \sN'$ is dominant. 
Moreover, by Corollary~\ref{cor3.2}(C) and Lemma~\ref{separable fiber unique}, for any $Q' \in \Frob_Y$, the fiber ${f^*}^{-1}(Q')$ consists of a single point.

By Chevalley’s theorem
(cf.~\cite[\href{https://stacks.math.columbia.edu/tag/05F9}{05F9}, \href{https://stacks.math.columbia.edu/tag/054E}{054E}]{stacks-project}),
the locus $\sN_0'\subseteq\sN'$ where the fibers have dimension~$0$ is constructible.  
Since $\Frob_Y\subseteq\sN_0'$, the subset $\sN_0'$ is dense in $\sN'$.  
In view of
\cite[\href{https://stacks.math.columbia.edu/tag/005K}{005K}]{stacks-project},
there exists a dense open $\sU'\subseteq\sN'$ (contained in $\sN_0'$) such that
$\sU\coloneqq{f^*}^{-1}(\sU')\to \sU'$ is quasi-finite.  

Because $\Frob_Y\cap\sN'$ is dense in $\sN'$, the intersection $\Frob_Y\cap\sU'$ is dense in $\sU'$.  
Thus, there is a dense subset of $\sU'(k)$ whose fibers along $\sU\to\sU'$ are singletons.  

Next, we show that $\sU\to\sU'$ can be assumed finite after shrinking $\sU'$.  
If some generic points of $\sU$ do not map to the generic point of $\sU'$, we may remove the closure of their images.  
Suppose there are distinct generic points $\eta_1,\eta_2\in\sU$ mapping to the same generic point of $\sU'$.  
We can find disjoint open neighborhoods $U_i$ of $\eta_i$ ($i=1,2$).  
But then, by
\cite[\href{https://stacks.math.columbia.edu/tag/005K}{005K}]{stacks-project},
the intersection $\im(U_1)\cap\im(U_2)$ contains a dense open of $\sU'$.  
In particular, it contains some $Q\in\Frob_Y$ whose fiber under $\sU\to\sU'$ is a singleton, a contradiction to $U_1\cap U_2=\emptyset$.  
Hence, after shrinking $\sU'$, we may assume $\sU$ is irreducible.  

Since nilpotent thickenings of affine schemes are affine
(cf.~\cite[\href{https://stacks.math.columbia.edu/tag/05YU}{05YU}]{stacks-project}),
$\sU\to\sU'$ is finite iff $\sU_\red\to\sU'$ is finite.  
Thus we may further assume that $\sU$ is reduced.  
By
\cite[Ch.~III, Ex.~3.7]{Hartshorne77}, shrinking $\sU'$ if necessary, 
we conclude that $\sU\to\sU'$ is finite.  

Finally, as $\sU\to\sU'$ is closed and dominant, it is surjective.  
Since there is a dense subset of $\sU'(k)$ whose fibers are singletons, the
induced function field extension \resizebox{2.5cm}{!}{$K(\sU')\subseteq
K(\sU_\red)$} must be purely inseparable.  
Thus $\sU\to\sU'$ is finite, purely inseparable, and surjective, as required.  
\end{proof}


The above lemma ensures the existence of a canonical lift $E_i$ of $E_i'$. 
However, this lift is ``canonical'' only in the sense of being determined as a 
point of the moduli space. In general, there may exist other lifts outside the 
moduli space. For instance, $F_X^*E_{i+1}$ is also a lift of $E_i$. If it is 
only $\mu$-semistable but not $p$-semistable, then it does not correspond to a 
point of the moduli space, and hence we cannot conclude that 
$F_X^*E_{i+1}\cong E_i$. This issue disappears when each $E_i'$ is 
$\mu$-stable, since $\mu$-stability descends along surjective morphisms and
$\mu$-stability implies $p$-stability.

\begin{lem}\label{stable bundles}
   Let $f\colon Y\to X$ be a separable surjective morphism of normal varieties
   inducing an isomorphism on étale fundamental groups:
   $\pi^\et_1(Y,\xi')\xrightarrow{\cong} \pi_1^\et(X,\xi)$. 
   Suppose that $Y$ is smooth.
   Then for any $(E_i',\sigma_i')_{i\in\N}\in\Fdiv(Y)$ with $E_i'$ being $\mu$-stable, there exists an $F$-divided sheaf
   $(E_i,\sigma_i)_{i\in\N}\in\Fdiv(X)$ such that 
   {\rm (1)} $f^*E_i\cong E_i'$ for all $i$;
   {\rm(2)} each $E_i$ is again $\mu$-stable with vanishing Chern classes.
\end{lem}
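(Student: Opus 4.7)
My plan is to realize the given $F$-divided sheaf $E'$ as a Frobenius-compatible sequence of points of the representation space $R_{Y,r}$, apply Lemma~\ref{object} to locate an integral locally closed subscheme over which $f^*\colon R_X\dashrightarrow R_Y$ admits unique lifts, and then use the $\mu$-stability hypothesis on the $E_i'$ to guarantee that the Frobenius pullbacks of those lifts remain inside the moduli space and can therefore be stitched together into an $F$-divided sheaf on $X$.

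First I would turn $E'$ into a coherent sequence of moduli points. By Theorem~\ref{EM10 main lemma}(1) each $E_i'$ has trivial Chern classes, and $\mu$-stability forces it to be $p$-stable with reduced Hilbert polynomial $p_{\sO_Y}$. Corollary~\ref{make F-divided sheaves points of the representation space} then supplies a compatible system of frames $\gamma_i'$ such that, after discarding finitely many initial indices, each $Q_i'\coloneqq(E_i',\gamma_i')$ is a $k$-rational point of $R_{Y,r}$ with $F_Y^*Q_{i+1}'=Q_i'$. Applying Lemma~\ref{object} to $\Sigma=\{Q_i'\}_{i\in\N}$ produces an integral locally closed $\sU'\subseteq R_{Y,r}$ and an infinite subset $S\subseteq\N$ with $Q_i'\in\sU'(k)$ for $i\in S$, together with the crucial fact that $\sU\coloneqq{f^*}^{-1}(\sU')\to\sU'$ is finite, purely inseparable, and surjective. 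Pure inseparability then gives, for each $i\in S$, a unique lift $P_i\coloneqq(\widetilde{E}_i,\beta_i)\in\sU(k)$ of $Q_i'$, i.e.\ a framed bundle $\widetilde{E}_i$ on $X$ with $f^*\widetilde{E}_i\cong E_i'$.

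The key claim I would then establish is that for any $i,j\in S$ with $i\leq j$ one has $F_X^{*(j-i)}\widetilde{E}_j\cong\widetilde{E}_i$ as framed bundles. Indeed, the $f$-pullback of $F_X^{*(j-i)}\widetilde{E}_j$ equals $F_Y^{*(j-i)}E_j'=E_i'$, which is $\mu$-stable; since $\mu$-stability descends along the surjective morphism $f$ and $\mu$-stability implies $p$-stability, the framed bundle $F_X^{*(j-i)}\widetilde{E}_j$ is itself $\mu$-stable on $X$ and hence defines a point of $R_X$. Its image in $R_Y$ is $Q_i'\in\sU'$, so it lies in $\sU$ over $Q_i'$, and the uniqueness of the lift forces $F_X^{*(j-i)}\widetilde{E}_j\cong\widetilde{E}_i$.

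Using this compatibility I would assemble the $F$-divided sheaf by setting $E_i\coloneqq F_X^{*(j(i)-i)}\widetilde{E}_{j(i)}$ for any chosen $j(i)\in S$ with $j(i)\geq i$; well-definedness is precisely the content of the previous step, and the canonical identifications provide the Frobenius isomorphisms $\sigma_i\colon F_X^*E_{i+1}\xrightarrow{\cong}E_i$. Property (1) then follows from Frobenius compatibility of $f^*$, the vanishing of Chern classes in (2) is automatic from the Hilbert polynomial $rp_{\sO_X}$ hard-wired into $R_X$, and $\mu$-stability of each $E_i$ comes once more from $\mu$-stability descent applied to $f^*E_i=E_i'$. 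The principal technical obstacle throughout the argument is precisely the verification that each Frobenius pullback $F_X^{*(j-i)}\widetilde{E}_j$ remains $p$-semistable -- without this it would not correspond to a point of $R_X$ and the uniqueness argument inside $\sU$ would collapse. This is exactly the reason the $\mu$-stability hypothesis on $E'$ is indispensable: Frobenius pullback generically destroys $\mu$-semistability on $X$, and only descent of $\mu$-stability along the surjection $f$ rescues the construction.
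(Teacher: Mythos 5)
Your proposal follows essentially the same route as the paper's proof: realize the $E_i'$ as Frobenius-compatible points of $R_Y$, invoke Lemma~\ref{object} to get the finite purely inseparable surjective map ${f^*}^{-1}(\sU')\to\sU'$ with unique lifts, use descent of $\mu$-stability along the surjective $f$ to force $F_X^{*(j-i)}\widetilde{E}_j$ into $R_X$ and hence into the unique fiber over $Q_i'$, and stitch the lifts into an $F$-divided sheaf. One small correction: the vanishing of the Chern classes of $E_i$ is not ``automatic from the Hilbert polynomial $rp_{\sO_X}$'' (a Hilbert polynomial only constrains certain intersection numbers of Chern classes with the polarization); the paper instead deduces it from $f^*E_i\cong E_i'$ and the projection formula for Chern classes along the surjection $f$.
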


\begin{proof} The smoothness of $Y$ guarantees (cf.~Corollary \ref{make
    F-divided sheaves points of the representation space}) the existence of a collection $\Sigma \coloneqq \{Q_i' = (E_i', \beta_i')\}_{i \in \mathbb{N}}$ of $k$-points in the moduli space $R_Y$ satisfying:
\begin{enumerate}[label=(\arabic*)]
    \item $F_Y^* Q_{i+1}' = Q_i'$ for all $i \in \mathbb{N}$;
    \item Each bundle $E_i'$ is $\mu$-stable with vanishing Chern classes.
\end{enumerate}

    By Lemma~\ref{object}, there exists a locally closed integral subscheme
    $\sU'\subseteq R_Y$ such that $\sU'(k)$ contains $\{Q_i'\}_{i\in S}$ for an infinite
    subset $S\subseteq\N$, and ${f^*}^{-1}(\sU')(k)\to \sU'(k)$ is a bijection. 
    For each $i\in S$, let $Q_i=(E_i,\beta_i)$ denote the unique $k$-point of 
    ${f^*}^{-1}(\sU')$ mapping to $Q_i'$.
    If $j>i$ with $j\in S$, then 
    $f^*({F_X^{j-i}}^*Q_j)\cong Q_i'$. 
    Since $E_i'$ is $\mu$-stable and $f$ is surjective, 
    ${F_X^{j-i}}^*E_j$ is also $\mu$-stable. 
    Hence ${F_X^{j-i}}^*Q_j\in {f^*}^{-1}(\sU')(k)$. 
    Because ${f^*}^{-1}(\sU')(k)\to \sU'(k)$ is a bijection, it follows that 
    ${F_X^{j-i}}^*Q_j=Q_i$. 

    Therefore, the sequence $\{E_i\}_{i\in S}$ extends uniquely to an
    $F$-divided sheaf $(E_i,\sigma_i)\in\Fdiv(X)$ satisfying $f^*E_i\cong E_i'$. 
    Each $E_i$ is $\mu$-stable, and the vanishing of their Chern classes follows 
    from the projection formula for Chern classes 
    (cf.~\cite[\href{https://stacks.math.columbia.edu/tag/02U9}{02U9}]{stacks-project}).
\end{proof}

Finally, we approximate general $F$-divided sheaves by $\mu$-stable ones through extensions.

\begin{lem}\label{separable for general bundles}
   Let $f: Y \to X$ be a separable surjective morphism of smooth varieties
   inducing an isomorphism on étale fundamental groups:
   $\pi^\et_1(Y,\xi')\xrightarrow{\cong} \pi_1^\et(X,\xi)$.  Given an
    $F$-divided sheaf $E'=(E_i',\sigma_i)_{i\in\N}\in\Fdiv(Y)$, there exists an $F$-divided sheaf
            $E\coloneqq(E_i,\sigma_i)_{i\in\N}\in\Fdiv(X)$ with $f^*E\cong E'$.\end{lem}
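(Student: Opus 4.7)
The plan is to reduce to the $\mu$-stable case of Lemma \ref{stable bundles} by induction on the length of a filtration into $\mu$-stable subquotients. Invoking Theorem \ref{EM10 main lemma}(2), after shifting indices I may assume $E'$ admits a filtration
\[
  0 = F'_0 \subseteq F'_1 \subseteq \cdots \subseteq F'_m = E'
\]
in $\Fdiv(Y)$ whose successive quotients $U'_j \coloneqq F'_j / F'_{j-1}$ have $\mu$-stable terms. By Lemma \ref{stable bundles}, each $U'_j$ descends to some $V_j \in \Fdiv(X)$ with $f^* V_j \cong U'_j$. I then build $E$ by induction: assuming that $E^{(j-1)} \in \Fdiv(X)$ with $f^* E^{(j-1)} \cong F'_{j-1}$ has already been produced, I aim to construct an extension $0 \to E^{(j-1)} \to E^{(j)} \to V_j \to 0$ in $\Fdiv(X)$ whose pullback to $Y$ is isomorphic to $0 \to F'_{j-1} \to F'_j \to U'_j \to 0$.

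For the induction step I apply Lemma \ref{object} to triples rather than to individual framed bundles. Since each of $F'_{j-1,i}$, $F'_{j,i}$, $U'_{j,i}$ is $p$-semistable with reduced Hilbert polynomial $p_{\sO_Y}$ by Theorem \ref{EM10 main lemma}(3), a compatible choice of framings makes the triples
\(
  Q'_i \coloneqq (F'_{j-1,i},\, F'_{j,i},\, U'_{j,i})
\)
into a Frobenius-compatible sequence of $k$-points of $R_{Y,\vec r}$, where $\vec r$ is the tuple of ranks. Lemma \ref{object} then furnishes an integral locally closed $\sU' \subseteq R_{Y,\vec r}$ that contains $Q'_i$ for $i$ in some infinite $S \subseteq \N$ and along which $(f^*)^{-1}(\sU') \to \sU'$ is finite, purely inseparable, and surjective -- hence a bijection on $k$-points. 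For each $i \in S$ the unique lift $Q_i = (A_i,\, \widetilde E^{(j)}_i,\, C_i) \in R_{X,\vec r}(k)$ therefore projects to framed bundles $A_i$ and $C_i$ on $X$ which, by another application of Theorem \ref{full faithfulness} together with the inductive hypothesis and Lemma \ref{stable bundles}, must be isomorphic (as $F$-divided sheaves) to $E^{(j-1)}_i$ and $V_{j,i}$; I set $E^{(j)}_i \coloneqq \widetilde E^{(j)}_i$.

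To promote $(E^{(j)}_i)_{i \in S}$ to an $F$-divided sheaf on $X$, I mimic the argument of Lemma \ref{stable bundles}: for $i < i'$ in $S$ the triple $(F_X^*)^{i'-i} Q_{i'}$ also lies over $Q'_i \in \sU'$, so the uniqueness of lifts forces $(F_X^*)^{i'-i} Q_{i'} = Q_i$, yielding the required Frobenius compatibility and hence an object $E^{(j)} \in \Fdiv(X)$ with $f^* E^{(j)} \cong F'_j$ at every level. The maps $E^{(j-1)} \hookrightarrow E^{(j)} \twoheadrightarrow V_j$ in $\Fdiv(X)$ are then recovered by descending the inclusion and quotient on $Y$ via the full faithfulness of Theorem \ref{full faithfulness}, and exactness of the resulting sequence in $\Fdiv(X)$ can be verified after pulling back to $Y$ using Lemma \ref{local criterion of flatness}(2).

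The main obstacle will be to ensure that $F_X^* \widetilde E^{(j)}_{i+1}$ remains $p$-semistable with reduced Hilbert polynomial $p_{\sO_X}$ and so defines a point of $R_{X,r_2}$; only then does the uniqueness part of Lemma \ref{object} actually apply to force the Frobenius compatibility. Here the separability of $f$ (allowing descent of (semi)stability information along the surjection $Y \to X$) and the inductive control from the $\mu$-stable subquotients $V_j$ should provide the requisite semistability. If a direct slope argument does not suffice, an alternative is to apply Lemma \ref{object} to the longer tuple that records the entire flag $F'_1 \subseteq F'_2 \subseteq \cdots \subseteq F'_m$ together with all its graded pieces, so that the Frobenius compatibility at every intermediate step is pinned down simultaneously by the uniqueness of the lift.
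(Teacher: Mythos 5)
Your proposal follows the same overall strategy as the paper: filter $E'$ into $\mu$-stable graded pieces via Theorem \ref{EM10 main lemma}, descend the pieces by Lemma \ref{stable bundles}, apply Lemma \ref{object} to tuples of framed bundles so that the purely inseparable bijection on $k$-points pins down a unique candidate lift at each level, and then force Frobenius compatibility from uniqueness of the lift. However, the step you yourself flag as ``the main obstacle'' is a genuine gap, and neither of your two proposed fixes closes it. Uniqueness of the lift only applies to $(F_X^*)^{i'-i}Q_{i'}$ once you know it is a point of $R_{X,\vec r}(k)$ at all, i.e.\ once the middle bundle $(F_X^*)^{i'-i}\widetilde E^{(j)}_{i'}$ is known to be $p$-semistable (Gieseker) with reduced Hilbert polynomial $p_{\sO_X}$. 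A ``direct slope argument'' from the semistability of $f^*(F_X^*)^{i'-i}\widetilde E^{(j)}_{i'}\cong F'_{j,i}$ can at best yield $\mu$-semistability, which the paper explicitly warns (in the discussion preceding Lemma \ref{stable bundles}) does not suffice to define a point of the moduli space; and enlarging the tuple recording the whole flag does not help, because the difficulty is not \emph{which} point the Frobenius pullback is but whether it is a point of the representation space at all.

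The paper closes this gap by reversing the order of two of your steps: \emph{before} establishing Frobenius compatibility, it descends the short exact sequence $0\to V_i'\to E_i'\to U_i'\to 0$ itself to $X$ for infinitely many $i$. This is done by running the relative $\HOM$-sheaf/adjunction argument from the proof of Theorem \ref{full faithfulness} over the triple moduli space $\sU\subseteq R_{X,r_1}\times R_{X,r_2}\times R_{X,r_3}$: the adjunction maps for $\HOM(\sF_1,\sF_2)$ and $\HOM(\sF_2,\sF_3)$ are isomorphisms at the dense set of étale-trivializable points, hence, after shrinking $\sU$, everywhere; so the maps $V_i\to E_i$ and $E_i\to U_i$ exist on $X$, and Lemma \ref{local criterion of flatness} makes the descended sequence exact. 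Only then does one conclude: since $(F_X^*)^{i'-i}V_{i'}\cong V_i$ and $(F_X^*)^{i'-i}U_{i'}\cong U_i$ by Lemma \ref{stable bundles} (with induction on the rank for longer filtrations), the Frobenius pullback of the middle term is an extension of $p$-semistable sheaves with reduced Hilbert polynomial $p_{\sO_X}$, hence itself $p$-semistable with that reduced Hilbert polynomial, hence a point of $R_X(k)$, and the uniqueness argument applies. Your proposal contains all the ingredients but assembles them in an order in which the crucial semistability cannot be established.
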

\begin{proof} 
The smoothness of $Y$ guarantees (cf.~Theorem~\ref{EM10 main lemma}) the existence of a filtration whose graded pieces are of the form
    $U'=(U_i',\theta_i')_{i\in\N}$ with: {\rm(1)} each
    $U_i'$ is $\mu$-stable; {\rm(2)} all Chern class vanish.
    By Corollary \ref{make F-divided sheaves points of the representation
    space}, we get a collection $\Sigma\coloneqq\{Q_i'\coloneqq (E_i',\beta_i')\}_{i\in\N}$ of
            $k$-points of the moduli space
            $R_Y$ satisfying $F_Y^*Q_{i+1}'=Q_i'$. 

            Following the argument in
            Lemma \ref{stable bundles}, we proceed as follows. Let $Q_i\coloneqq
    (E_i,\beta_i)$ be the unique $k$-point of ${f^*}^{-1}\sU'$ mapping to $Q_i'$
    for all $i\in S$.
    If $j>i$ and $j\in S$, then $f^*{F_X^{j-i}}^*Q_j\cong Q_i'$. If ${F_X^{j-i}}^*Q_j\in
    R_X(k)$, \emph{i.e. $f^*{F_X^{j-i}}^*E_i$ is semistable}, then it must lie in ${f^*}^{-1}\sU'(k)$. The fact that ${f^*}^{-1}\sU'(k)\to
    \sU'(k)$ is a bijection implies that $f^*{F_X^{j-i}}^*Q_j=Q_i$. This allows
    us to extend the sequence $\{E_i\}_{i\in S}$ to an
    $F$-divided sheaf  $(E_i,\sigma_i)\in\Fdiv(X)$ satisfying $f^*E_i\cong
    E_i'$. Then one can apply \cite[Prop. 1.7]{Gi75} to conclude.

    To show that
${F_X^{j-i}}^*Q_j\in
R_X(k)$, we first assume that $E'$ is an extension of
$U'=(U_i',\theta_i')_{i\in\N}$ by $V'=(V_i',\theta_i')_{i\in\N}$, where each
$U_i', V_i'$ is stable. Lemma \ref{object} implies that there are
$U=(U_i,\theta_i)_{i\in\N}$ and $V=(V_i',\theta_i)_{i\in\N}$ satisfying
$f^*U_i\cong U_i'$ and $f^*V_i\cong V_i'$. Moreover, we have a moduli space
$\sU'\subseteq R_{Y,r_1}\times R_{Y,r_2}\times R_{Y,r_3}$ satisfying 
\begin{enumerate}[label={(\arabic*)}]
    \item $\sU'(k)$ contains \resizebox{3.6cm}{!}{$((U_i',\beta_{U_i'}),Q_i',
        (V_i',\beta_{V_i'}))$} for infinitely many $i\in\N$;
    \item the set of points
        \resizebox{4cm}{!}{$\left\{((U_i',\beta_{U_i'}),Q_i',
        (V_i',\beta_{V_i'}))\right\}$} is dense in $\sU'$;
    \item $\Frob_Y\cap\sU'(k)$ is dense, where
        \begin{center}
        \resizebox{0.91\textwidth}{!}{$
              \Frob_Y\coloneqq\Set{P'=(P^{(1)},P^{(2)},P^{(3)})\in
              R_{Y,r_1}\times R_{Y,r_2}\times R_{Y,r_3}(k)\mid P^{(i)}\ \text{is étale trivializable $\forall\, i$}}
      $}\end{center}
    \item the morphism ${f^*}^{-1}(\sU')\to\sU'$ is finite, purely inseparable, and surjective.  
\end{enumerate}
In light of the proof of Theorem \ref{full faithfulness}, we can shrink
$\sU\coloneqq {f^*}^{-1}(\sU')$ further so that each exact sequence $0\to V_i'\to E_i'\to U_i'\to 0$ 
 descend to a sequence
 \begin{equation}\label{extension by stable bundles}
0\longrightarrow V_i\longrightarrow E_i\longrightarrow U_i\longrightarrow 0
 \end{equation} for infinitely many $i$.

Indeed, let $(\sE_i^\univ,\beta_i^\univ)$ be the universal object on $X\times_kR_{X,r_i}$, and let
\(\pr_i\colon R_{X,r_1}\times_k R_{X,r_2}\times_k R_{X,r_3}\to R_{X,r_i}\)
($i=1,2,3$) be the projections. 
Set
\begin{center}
\resizebox{0.9\textwidth}{!}{\(
\sF_1\coloneqq(\id_X\times\pr_1)^*\sE_1^\univ|_{X\times_k\sU},\, 
\sF_2\coloneqq(\id_X\times\pr_2)^*\sE_2^\univ|_{X\times_k\sU},\,
\sF_3\coloneqq(\id_X\times\pr_3)^*\sE_3^\univ|_{X\times_k\sU},\,
\)}\end{center}
and consider the maps
\(
   Y\times_k\sU\xrightarrow{f\times\id} X\times_k\sU\xrightarrow{\pr}\sU.
\)

Shrinking $\sU$ if necessary, we may assume that all sheaves
\begin{center}
\resizebox{0.9\textwidth}{!}{\(
\HOM(\sF_1,\sF_2),\,\HOM((f\times\id)^*\sF_1,(f\times\id)^*\sF_2),\,
\HOM(\sF_2,\sF_3),\,
\HOM((f\times\id)^*\sF_2,(f\times\id)^*\sF_3)
\)}\end{center}
are locally free and commute with base change.
For any $\sQ=((V,\beta_V),(E,\beta_E),(U,\beta_U))\in\sU(k)$, the restriction
of the adjunction maps 
\begin{align} 
    \pr_*\HOM(\sF_1,\sF_2)\longrightarrow\pr_*(f\times\id)_*\HOM((f\times\id)^*\sF_1,(f\times\id)^*\sF_2) \label{Ziel map VE}\\
    \pr_*\HOM(\sF_2,\sF_3)\longrightarrow\pr_*(f\times\id)_*\HOM((f\times\id)^*\sF_2,(f\times\id)^*\sF_3) \label{Ziel map EU}
\end{align}
are the canonical maps
\begin{align}\label{canonical map VE}
        f_{V,E}^*\colon
        \Hom(V,E)\longrightarrow\Hom(f^*V,f^*E)\\
f_{E,U}^*\colon
        \Hom(E,U)\longrightarrow\Hom(f^*E,f^*U)
\end{align}  
If $\sQ\in\Frob\cap\sU(k)$, i.e. all $V,E,U$ are étale trivializable, 
then the surjectivity of
\(\pi^\et_1(Y,\xi')\twoheadrightarrow\pi_1^\et(X,\xi)\)
implies that \eqref{Ziel map VE} and \eqref{Ziel map EU} are isomorphisms. 
Since by Lemma~\ref{local criterion of flatness} (1), the locus in $\sU$ where
\eqref{Ziel map VE} and \eqref{Ziel map EU} are isomorphisms is open, after possibly shrinking $\sU$, we may assume that \eqref{Ziel map VE} and \eqref{Ziel map EU} are isomorphisms on all of $\sU$.
Note that even after shrinking $\sU$, we still have that  
$\sU(k)$ contains $\sQ_i=((V_i,\beta_{V_i}),Q_i,(U_i,\beta_{U_i}))$ for infinitely many
$i\in\N$. In other words, for infinitely many points
$\sQ_i\in\sU(k)$, \eqref{Ziel map VE} and
\eqref{Ziel map EU} are isomorphisms. This is exactly what we wanted.

 Thanks to Lemma \ref{local criterion of flatness}, the sequence
 \eqref{extension by stable bundles} is exact. It follows from Lemma
 \ref{stable bundles} that ${F_X^{j-i}}^*U_j\cong U_i$ and
 ${F_X^{j-i}}^*V_j\cong V_i$, so
 ${F_X^{j-i}}^*E_j$ is $p$-semistable with reduced Hilbert polynomial
 $p_{\sO_X}$. Thus ${F_X^{j-i}}^*Q_j\in
R_X(k)$.  The general case then follows from induction on the rank of $E'$.
\end{proof}
\subsubsection{The general case} 
We now assemble the separable and purely inseparable cases. 
Because the preceding lemmas were carefully formulated to avoid smoothness assumptions, 
this final step of the argument becomes considerably simpler.
\begin{thm}\label{separable for general bundles final version}
   Let $f: Y \to X$ be a surjective morphism of smooth varieties over
   $k=\bar{\F}_p$, inducing an isomorphism of étale fundamental groups:
   $\pi^{\et}_1(Y,\xi')\xrightarrow{\cong} \pi_1^{\et}(X,\xi)$. Then
   for any $F$-divided sheaf $E' \in \Fdiv(Y)$, there exists an $F$-divided
   sheaf $E \in \Fdiv(X)$ with an isomorphism: $F^*E\cong E'$ in $\Fdiv(Y)$.\end{thm}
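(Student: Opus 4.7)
The plan is to reduce the essential surjectivity of $f^*$ on $\Fdiv$ to the separable case already handled in Lemma~\ref{separable for general bundles}, using the Frobenius factorization of Lemma~\ref{Frobenius factorization} as the main device. The key conceptual input is that purely inseparable morphisms are invisible to both $\pi_1^\et$ (Theorem~\ref{Grothendieck}) and to $\Fdiv$ (Proposition~\ref{purely inseparable maps and Fdiv}), so they can be inserted into the picture to fix the failure of separability of $f$ without disturbing the descent data.

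Concretely, I would first apply Lemma~\ref{Frobenius factorization} to obtain a factorization
\[
    F_Y^n \colon Y \xrightarrow{\;u\;} Y' \xrightarrow{\;v\;} Y
\]
with $Y'$ a normal variety, $u$ and $v$ finite purely inseparable, and $g := f \circ v \colon Y' \to X$ separable and surjective. Theorem~\ref{Grothendieck} then gives that both $u$ and $v$ induce isomorphisms on $\pi_1^\et$, so that $\pi(g) = \pi(f) \circ \pi(v)$ is again an isomorphism after choosing compatible rational basepoints. Next, set $E'' := v^* E' \in \Fdiv(Y')$ and apply the separable case to $g$ to produce $E \in \Fdiv(X)$ with $g^* E \cong E''$. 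This yields $v^* f^* E \cong v^* E'$ in $\Fdiv(Y')$, and since $v^*$ is an equivalence of categories by Proposition~\ref{purely inseparable maps and Fdiv}, the latter upgrades to $f^* E \cong E'$ in $\Fdiv(Y)$, which is the desired conclusion.

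The main obstacle is the application of (an analogue of) Lemma~\ref{separable for general bundles} to $g \colon Y' \to X$: the source $Y'$ is only guaranteed to be normal, whereas Lemmas~\ref{object}, \ref{stable bundles}, and \ref{separable for general bundles} are all stated under the assumption that the source is smooth. To resolve this I would prove normal-variety analogues of those lemmas by (i) picking a smooth $k$-rational basepoint $\xi'' \in Y'$ above $\xi$ and realizing $F$-divided sheaves on $Y'$ as rational points of the representation space $R_{Y'}$ via the $p$-semistability of Theorem~\ref{EM10 main lemma}(3) and the $\LF$ condition at a smooth basepoint; (ii) substituting Theorem~\ref{EM10 main lemma}(2') for its smooth counterpart (2) whenever the $\mu$-stable filtration argument is invoked; and (iii) noting that the fibre-uniqueness heart of Lemma~\ref{object}, which rests on Lemma~\ref{separable fiber unique} and Corollary~\ref{cor3.2}, uses only the normality of the source, not its smoothness. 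Once this moduli-theoretic setup is available over the normal base $Y'$, the descent argument of Lemma~\ref{separable for general bundles} should carry over essentially verbatim and complete the proof.
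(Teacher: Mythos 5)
Your skeleton matches the paper's: Frobenius factorization (Lemma~\ref{Frobenius factorization}), invisibility of purely inseparable maps to $\pi_1^{\et}$ and to $\Fdiv$ (Theorem~\ref{Grothendieck}, Proposition~\ref{purely inseparable maps and Fdiv}), then reduction to Lemma~\ref{separable for general bundles}. However, your first step contains a concrete error: $f\circ v\colon Y'\to Y\to X$ is \emph{not} separable. In the construction of Lemma~\ref{Frobenius factorization}, $Y'$ is the normalization of $({F_X^n}^*Y)_\red$, and the separable map $g\colon Y'\to X$ is the \emph{first projection} of the Frobenius-twisted fibre product; on function fields one has $f\circ v=F_X^n\circ g$, so $(f\circ v)^*K(X)=g^*K(X)^{p^n}$ and $K(Y')/(f\circ v)^*K(X)$ contains the nontrivial purely inseparable extension $g^*K(X)/g^*K(X)^{p^n}$. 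The gap is repairable — apply the separable case to the projection $g$ to obtain $g^*E\cong v^*E'$, and then observe that $(f\circ v)^*E=g^*\left((F_X^n)^*E\right)\cong g^*E$ since $(F_X^n)^*$ is the shift, which is isomorphic to the identity on $\Fdiv(X)$, whence $f^*E\cong E'$ after applying the equivalence $v^*$ — but as written the claim is false and the conclusion $v^*f^*E\cong v^*E'$ does not follow without this extra shift argument.

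The second, more substantive issue is your plan for the non-smoothness of $Y'$. You propose to apply Theorem~\ref{EM10 main lemma}(3) on $Y'$ to get $p$-semistability and to substitute (2$'$) for (2) to get the $\mu$-stable filtration; but (3) is stated and proved only for smooth varieties, and (2$'$) yields only a proper $F$-divided subsheaf on the regular locus — it does not produce the global filtration with $\mu$-stable, Chern-class-zero graded pieces that the arguments of Lemmas~\ref{object}--\ref{separable for general bundles} actually consume. The paper sidesteps both points by running the stability analysis entirely on the smooth variety $Y$: Theorem~\ref{EM10 main lemma} furnishes the filtration of $E'$ (after a shift) with $\mu$-stable graded pieces of vanishing Chern classes there, and this whole package is then transported to $Y'$ through the $\Fdiv$-equivalence of Proposition~\ref{purely inseparable maps and Fdiv}, the projection formula guaranteeing that the transported pieces remain $\mu$-stable with vanishing Chern classes, hence $p$-semistable with reduced Hilbert polynomial $p_{\sO_{Y'}}$ and therefore points of the representation space. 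You should replace your steps (i)--(ii) by this transport argument rather than attempting to re-prove the Esnault--Mehta structure results on a singular variety; your step (iii), that the fibre-uniqueness input only needs normality of the source, is correct and agrees with the paper.
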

\begin{proof} Thanks to Corollary \ref{make F-divided sheaves points of the
    representation space}, up to an inconsequential shift, $E'$ is a successive extension of objects in $\Fdiv(Y)$ of the form $U'=(U_i',\theta_i')_{i\in\N}$, where each
   $U_i'$ is $\mu$-stable with reduced Hilbert polynomial
   $p_{\sO_Y}$. 

   In view of Lemma \ref{Frobenius factorization} we have a factorization
   $F_Y^n\colon Y\xrightarrow{u} Y'\xrightarrow{v} Y$, where $Y'$ is normal and
   separable over $X$ and $u,v$ are finite purely inseparable. By
   Theorem \ref{Grothendieck}, $u$ induces an isomorphism on $\pi_1^\et$.
   According to Lemma \ref{purely inseparable maps and Fdiv}, $E'$ together
   with the successive extension descend uniquely along $u$ to $Y'$.
   Moreover, if we denote $V'=(V_i',\tau_i')_{i\in\N}\in\Fdiv(Y')$ the descent
   of $U'$, then each $V_i'$ has vanishing Chern class  by the projection formula of
   Chern classes (cf.~\cite[\href{https://stacks.math.columbia.edu/tag/02U9}{02U9}]{stacks-project}). Thus each $V_i'$ is $\mu$-stable with reduced Hilbert polynomial
   $p_{\sO_{Y'}}$. Replacing $Y$ by $Y'$ we can assume that $Y$ is separable over $X$. However, $Y$ is only normal instead of being smooth.

   We now apply Lemma \ref{separable for general bundles} to conclude the argument. Although $Y$ may not be smooth, the $F$-divided sheaf $E'$ remains a successive extension of $\mu$-stable objects. Consequently, each $E_i'$ is $p$-semistable and can therefore be realized as a point in the representation space. This ensures that the argument of Lemma \ref{separable for general bundles} remains valid in this setting.
\end{proof}

\section{For general $k$}

In this section, we extend the results of the previous sections to an arbitrary perfect base field $k$, 
using the technique of ``spreading out''. The idea is as follows: Given a surjective map 
$f\colon Y\to X$ of smooth projective $k$-schemes inducing an isomorphism on the étale fundamental group, 
we choose a finite type $\F_p$-algebra $A \subseteq k$ such that $f$ admits a model 
$f_S\colon Y_S \to X_S$, where $f_S$ is a morphism of smooth projective schemes over 
$S = \Spec(A)$. Moreover, $f_S$ can be chosen so that for each geometric point $s$ of $S$, 
the geometric fiber $f_s$ induces an isomorphism of the étale fundamental groups. 
Since the geometric points of $S$ can be taken to be $\bar{\F}_p$, this construction allows 
us to approximate the generic fiber $f$ using the known results for the
geometric fiber $f_s$.

Compared with the ``spreading out'' employed in \cite{EM10}, the main difficulty here 
is that the model $f_S$ must be constructed carefully to preserve the isomorphism of étale 
fundamental groups on the closed fibers. In the classical case, any smooth
projective model $X_S/S$ suffices, 
because the specialization map is surjective, so triviality of the fundamental
group of the generic fiber 
implies the triviality of the special fiber.

In this section, we fix a proper surjective morphism $f\colon Y\to X$ between geometrically integral schemes 
of finite type over a field $k$ of characteristic $p$.
 
\subsection{Model for $f$ separable}
Suppose that $f$ is separable. Our first objective is to establish that such morphisms admit a spreading-out property.

\begin{lem}\label{fiberwise separable}
    Let $f_S\colon Y_S\to X_S$ be a morphism between flat proper geometrically integral
    schemes over a Noetherian integral base scheme $S$ with fraction field
    $K\subseteq k$. If the base change $f_S\times_S k = f$ is separable, then there exists a nonempty open subset
    $U\subseteq S$ such that for every geometric point $s$ of $U$, the fiber map 
    $f_s\colon Y_s\to X_s$ is separable.   
\end{lem}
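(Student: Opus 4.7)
The plan is to work with the smooth locus $W \subseteq Y_S$ of $f_S$, an open subscheme, and its closed complement $Z \coloneqq Y_S \setminus W$; I will then apply upper semicontinuity of fiber dimension to the proper morphism $Z \to S$ to produce a dense open $U \subseteq S$ over which each geometric fiber $Y_s$ still meets $W$.

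First I would translate the hypothesis geometrically. For a dominant morphism between integral schemes of finite type over a field, separability of the induced function field extension is equivalent to the morphism admitting a dense smooth locus. The hypothesis that $f = f_S \times_S k$ is separable therefore produces a nonempty open smooth locus of $f$ inside $Y$. The morphism $Y \to Y_{\eta_S}$ is the faithfully flat base change along $K \subseteq k$, and smoothness of morphisms is preserved \emph{and} reflected by such base-field extensions on the source. Hence the smooth locus of $f_{\eta_S}$ is nonempty, and being open in the irreducible scheme $Y_{\eta_S}$ it is dense there and contains the generic point $\eta_{Y_S} = \eta_{Y_{\eta_S}}$. Consequently $\eta_{Y_S} \in W$.

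Next, since $Y_S \to S$ is flat and proper with geometrically integral fibers, the fibers all have a common dimension $d$. As a closed subscheme of the proper $Y_S$, the morphism $Z \to S$ is again proper. By the previous paragraph, $Z_{\eta_S}$ is a proper closed subset of the irreducible $Y_{\eta_S}$, so $\dim Z_{\eta_S} < d$. Upper semicontinuity of fiber dimension applied to $Z \to S$ then shows that
\[
    U \coloneqq \{\, s \in S \mid \dim Z_s < d \,\}
\]
is an open subset of $S$ containing $\eta_S$, hence nonempty. For any geometric point $s$ of $U$ lying over a schematic point $s_0 \in U$, one has $\dim Z_s = \dim Z_{s_0} < d = \dim Y_{s_0} = \dim Y_s$, so $Z_s$ is a proper closed subset of the integral scheme $Y_s$. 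Therefore $W \cap Y_s$ is a dense open of $Y_s$ containing its generic point, and $f_s$ (obtained from $f_S$ by base change, so still smooth on this open set) is smooth at $\eta_{Y_s}$. Smoothness at the generic point forces $f_s$ to be dominant and to induce a separable extension of function fields, so $f_s$ is separable as required.

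The most delicate step will be the descent of generic smoothness along $Y \to Y_{\eta_S}$ in the second paragraph: the extension $K \subseteq k$ is in general highly transcendental, and one needs smoothness of morphisms to be preserved \emph{and} reflected by arbitrary extensions of the base field on the source. This holds because smoothness is fpqc-local on the source, so the smooth loci of $f$ and $f_{\eta_S}$ correspond under the faithfully flat projection $Y \to Y_{\eta_S}$; in particular one is nonempty if and only if the other is.
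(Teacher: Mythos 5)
Your proof is correct and follows essentially the same route as the paper's: both arguments reduce to the nonemptiness of the smooth locus $W\subseteq Y_S$ of $f_S$ (obtained by descending separability, hence generic smoothness, from the $k$-fiber to the $K$-generic fiber) and then spread this out over a nonempty open $U\subseteq S$. The only difference is in the bookkeeping for producing $U$: the paper takes the image of $W$ in $S$, which is constructible by Chevalley and contains the generic point, whereas you apply upper semicontinuity of fiber dimension to the proper complement $Z=Y_S\setminus W$; both yield that $Y_s$ meets $W$ for geometric $s\in U$, hence that $f_s$ is smooth at the generic point of $Y_s$ and therefore separable.
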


\begin{proof}
    Since $S$ is integral and $X_S, Y_S$ are flat and geometrically integral over $S$, 
    both are integral schemes. By assumption, the generic fiber 
    $f_S\times_S K$ is separable, hence $f_S$ is generically separable. 
    
    Let $V\subseteq Y_S$ denote the locus where $f_S$ is smooth. By generic smoothness, 
    $V$ is a nonempty open subset of $Y_S$. Its image $U\coloneqq f_S(V)$ is constructible 
    by Chevalley’s theorem, and since it contains the generic point of $S$, it must be open and nonempty. 
    
    For any geometric point $s\in U$, the fiber $Y_s$ meets $V$, so $f_s$ is smooth at some point of $Y_s$, 
    which implies that $f_s$ is separable. 
\end{proof}
 
We now present the technical core of this section: we will demonstrate that the property of ``inducing an isomorphism on étale fundamental groups of geometric fibers'' is a spreading-out property. Since our subsequent approach involves factoring general morphisms into a purely inseparable morphism followed by a separable one, we must develop our theory in the context of possibly singular varieties.

\begin{lem}\label{fiberwise pi-iso} 
    Let $f_S \colon Y_S \to X_S$ be a map between proper geometrically normal schemes {\rm(cf.~\cite[\href{https://stacks.math.columbia.edu/tag/038Z}{038Z}]{stacks-project})} over a Noetherian integral scheme $S$ with fraction field $K \subseteq k$. Assume:
    \begin{enumerate}[label={\rm(\arabic*)}]
        \item $f_S \times_S k = f$,
        \item $X_S$ is smooth over $S$, and
        \item $\pi_1^{\et}(Y_{\bar{k}}) \to \pi_1^{\et}(X_{\bar{k}})$ is an isomorphism.
    \end{enumerate}
    Then there exists a nonempty open subset $U \subseteq S$ such that for every geometric point $s$ of $U$, the induced map $\pi_1^{\et}(Y_s) \to \pi_1^{\et}(X_s)$ is an isomorphism.
\end{lem}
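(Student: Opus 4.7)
My strategy is to spread out the isomorphism of étale fundamental groups by factoring $f_S$ via its Stein factorization and handling the finite and connected parts separately. Throughout, I may freely shrink $S$ since the goal is only to find a dense open.

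\textbf{Reductions.} Since $X_S$ and $Y_S$ are proper and geometrically normal over $S$, hence geometrically reduced, the étale fundamental groups of their geometric fibers are invariant under passage to larger algebraically closed fields. Combined with the inclusion $K := \mathrm{Frac}(S) \subseteq k$ and assumption (3), we obtain an isomorphism $\pi_1^\et(Y_{\bar K}) \xrightarrow{\cong} \pi_1^\et(X_{\bar K})$. We may therefore replace $k$ by $\bar K$ and focus on the geometric generic fiber of $S$.

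\textbf{Stein factorization.} I would apply Stein factorization to $f_S$, obtaining $f_S = h_S \circ g_S$ with $g_S \colon Y_S \to Z_S$, $(g_S)_*\sO_{Y_S} = \sO_{Z_S}$, and $h_S$ finite. Using flat base change for proper cohomology together with upper-semicontinuity of fiberwise $h^0$ (EGA~III), after shrinking $S$ this Stein factorization commutes with base change to every geometric fiber of $S$, so $f_s = h_s \circ g_s$ is again the Stein factorization of $f_s$. Moreover $Z_S$ is normal, and after further shrinking, $g_S$ has geometrically connected fibers.

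\textbf{The finite part.} Over the geometric generic fiber, the iso $\pi_1^\et(Y_{\bar K}) \xrightarrow{\cong} \pi_1^\et(X_{\bar K})$ factors through the surjection $(g_{\bar K})_*$ (arising from connectedness of fibers), forcing both $(g_{\bar K})_*$ and $(h_{\bar K})_*$ to be isos. For the finite morphism $h_{\bar K}$ onto the normal scheme $X_{\bar K}$, iso on $\pi_1^\et$ forces $h_{\bar K}$ to be radical: otherwise the separable part of $h_{\bar K}$ would produce a nontrivial finite étale cover of $X_{\bar K}$ detected by $\pi_1^\et$. Radicality being a constructible condition that holds at the generic point of $S$, it spreads out to a dense open $U_1 \subseteq S$, and on $U_1$ Theorem \ref{Grothendieck} yields that $(h_s)_*$ is an iso on $\pi_1^\et$.

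\textbf{The connected part and main obstacle.} Surjectivity of $(g_s)_*$ holds for every geometric point $s$ because $g_s$ has connected fibers. The main task, and the key technical difficulty, is to spread out the injectivity $(g_{\bar K})_* \colon \pi_1^\et(Y_{\bar K}) \hookrightarrow \pi_1^\et(Z_{\bar K})$. The subtlety is that $Z_S$ is proper and geometrically normal but not smooth over $S$ in general, so the SGA~1, Exp.~X specialization theorem does not apply to $Z_S$ or $Y_S$ directly. My approach would be to spread out individual finite étale covers of $Y_s$ to a neighborhood of $s$, descend them via the generic isomorphism to covers of $Z$ defined over an open, and uniformize over a dense open using constructibility. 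Since $\pi_1^\et(Y_{\bar K})$ and $\pi_1^\et(Z_{\bar K})$ are topologically finitely generated, one can work level-by-level on a topological presentation, combined with Noetherian induction on $S$, to obtain a single open on which the induced map of profinite groups is an isomorphism. Combining this with the finite-part conclusion gives the desired open $U \subseteq S$.
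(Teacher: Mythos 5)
There are two genuine gaps here, one in each half of your Stein-factorization strategy. The ``finite part'' argument is simply false: an isomorphism on $\pi_1^{\et}$ does \emph{not} force the finite piece $h_{\bar K}$ to be radical. Take $f\colon \mathbb{P}^1\to\mathbb{P}^1$, $t\mapsto t^2$, in characteristic $p>2$: its Stein factorization has $g=\mathrm{id}$ and $h=f$, both fundamental groups are trivial so the induced map is an isomorphism, yet $h$ is finite separable of degree $2$ and not radical. The error is that the separable part of a finite cover of a normal variety is only \emph{generically} étale; if it ramifies along a divisor it yields a finite étale cover only of an open subset, which is invisible to $\pi_1^{\et}(X_{\bar K})$. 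So you cannot reduce the finite part of the Stein factorization to Theorem~\ref{Grothendieck}, and this branch of the argument collapses.

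The ``connected part'' is where the actual content of the lemma lives, and the sketch does not go through as written. A finite étale cover of a closed fibre $Y_s$ does not spread out to a Zariski neighbourhood of $s$ in $S$; it only lifts to the formal (or henselian) neighbourhood. This is precisely why the paper's proof abandons any fibration-by-opens strategy: it chooses a complete DVR $R$ with a map $\Spec(R)\to S$ sending the closed point to $s$ and the generic point to $\eta_S$, lifts the given cover $Y'\to Y_s$ to $\FEt(Y_R)$ by Grothendieck's existence theorem, descends its geometric generic fibre to a cover of $X_{\bar L}$ using hypothesis (3), and then extends that descent back over $R_1$ by relative normalization so that its special fibre pulls back to $Y'\to Y_s$. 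The extension step is where Zariski--Nagata purity over the \emph{regular} scheme $X_{R_1}$ is invoked, and this is the only place hypothesis (2) ($X_S$ smooth over $S$) is used. Your proposal never uses that hypothesis, which is a reliable sign that the key step is missing: without it, the ``descend covers over an open, uniformize by constructibility, and induct'' plan has no identifiable mechanism, since the locus of $s$ for which a given cover of $Y_s$ descends to $X_s$ is not obviously constructible, and appealing to topological finite generation of $\pi_1^{\et}$ in characteristic $p$ (itself a nontrivial input) does not substitute for an argument that handles a single cover of a single special fibre.
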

\begin{proof}     Thanks to 
    Lemma \ref{fiberwise separable}, we can assume that $f_s$ is
    separable for each geometric point $s$ in $S$. In view of \cite[Exposé X, Thm
    3.8]{SGA1},  for each $s$, the map $\pi_1^\et(Y_{s})\to
    \pi_1^\et(X_{s})$ is surjective. Consequently, the
    pullback of finite étale covers $f_s^*\colon\FEt(X_s)\to\FEt(Y_s)$ is fully
    faithful. To prove essential surjectivity,
    consider $Y'\to Y_s\in \FEt(Y_s)$. Using
    \cite[\href{https://stacks.math.columbia.edu/tag/054F}{054F}]{stacks-project}
    then performing strict henselization
    \cite[\href{https://stacks.math.columbia.edu/tag/07QL}{07QL}]{stacks-project}
    and completion
    \cite[\href{https://stacks.math.columbia.edu/tag/07NU}{07NU}]{stacks-project}, we construct:
    \begin{itemize}
        \item A complete DVR $R$ with algebraically closed residue field; 
        \item A map
    $\Spec(R)\to S$ mapping the special point to $s$, the generic point
    to $\eta_S$ (the generic point of $S$).
    \end{itemize}
     By Grothendieck's existence theorem,
    $Y'\to Y_s$ extends uniquely to $Y_R'\to Y_R\in\FEt(Y_R)$. Set $L\coloneqq
    \Frac(R)$. The pullback $Y_{\bar{L}}'\to
    Y_{\bar{L}}\in\FEt(Y_{\bar{L}})$ descends, via the isomorphism
    $\pi_1^\et(Y_{\bar{L}})\to \pi_1^\et(X_{\bar{L}})$, to a unique cover
    $X_{\bar{L}}'\to X_{\bar{L}}\in\FEt(X_{\bar{L}})$. 

    There exists a finite intermediate extension $L\subseteq
    L_1\subseteq\bar{L}$ and a cover
    $X_{L_1}'\to X_{L_1}\in\FEt(X_{{L_1}})$ whose base change to $\bar{L}$ is
    $X_{\bar{L}}'\to X_{\bar{L}}$. Let $R_1$ be the integral closure of $R$
    in $L_1$, giving a finite extension of complete DVRs with
    identical residue field. 

    The relative normalization
    $X_{R_1}'$ of $X_{R_1}$ in $X_{L_1}'$    fits in a  
     diagram (cf.~\cite[\href{https://stacks.math.columbia.edu/tag/0BAK}{0BAK}]{stacks-project}) 
    \begin{equation}\label{relative normalization}
                \begin{tikzpicture}[xscale=2.0,yscale=1.2,baseline={([yshift=-.5ex]current bounding box.center)}]
                     \path
                     (0,0)     node (F) {$Y_{R_1}'$}
                         +(0:1.5)  node (star) {$X_{R_1}'$}
                         ++(-90:1.5) node (X) {$Y_{R_1}$}
                         +(0:1.5)  node (Y) {$X_{R_1}$};
                     \draw[->] (F)--(star);
                     \draw[->] (F)--(X) node[midway,left,scale=.6]{$\lambda_Y$};
                     \draw[->] (X)--(Y) node[midway,above,scale=.6]{$f_{R_1}$};
                     \draw[->] (star)--(Y) node[midway,right,scale=.6]{$\lambda_X$};
                \end{tikzpicture}
    \end{equation}
    \textbf{Key observations: }\begin{itemize}
        \item The normalization map $\lambda_X$ is finite   by
    \cite[\href{https://stacks.math.columbia.edu/tag/0AVK}{0AVK}]{stacks-project});
        \item The diagram is cartesian over $L_1$ (since it is so over
            $\bar{L}$);
        \item For
    $\eta_s\in Y_{R_1}$ the generic point of $Y_s$, generic smoothness implies
    $f_R$ is smooth on some open neighborhood
    $V\ni\eta_s$;
        \item By
    \cite[\href{https://stacks.math.columbia.edu/tag/03GC}{03GC}]{stacks-project},
    $\lambda_Y^{-1}(V)=V\times_{X_{R_1}}X_{R_1}'$, so $\lambda_X$ is étale at 
    $f_s(\eta_s)$.
\end{itemize}
    Since $X_{R_1}'$ is normal
    (cf.~\cite[\href{https://stacks.math.columbia.edu/tag/035L}{035L}]{stacks-project})
    and $X_{R_1}$ is regular, Zariski-Nagata purity
    (cf.~\cite[\href{https://stacks.math.columbia.edu/tag/0BMB}{0BMB}]{stacks-project})
    shows $\lambda_X$ is étale. Thus the diagram \eqref{relative normalization} is
    cartesian, and $Y'\to Y_s$ is indeed the pullback  of
    the special fiber
    of $\lambda_X$.
\end{proof}

\subsection{Model for general $f$} 

Now we are ready to assemble the ingredients. Thanks to \cite{stacks-project}, 
the following result is essentially a direct consequence of 
\href{https://stacks.math.columbia.edu/tag/03YE}{the descent of properties of morphisms}, 
together with the standard factorization trick, which decomposes a morphism $f$ 
into its purely inseparable part followed by its separable part.

\begin{thm}\label{descending properties} Let $f\colon Y\to X$ be a proper surjective morphism between
geometrically integral schemes proper over a
field $k$ of characteristic $p$. There exists a smooth affine $\F_p$-scheme
$S=\Spec(A)$ with an
    inclusion $A\subseteq k$, and an
    $S$-morphism $f_S\colon X_S\to Y_S$ such that
    $f_S\times_Sk=f$, and after possibly modifying $S$, the following
    properties hold: 
    \begin{enumerate}[label={\rm (\arabic*)}]
        \item $X_S,Y_S$ are integral schemes;
        \item $X_S,Y_S$ are proper, flat, surjective, and geometrically
            reduced geometrically connected over $S$;
        \item if $f$ is affine, then $f_S$ is affine;
        \item if $f$ is finite, then $f_S$ is
            finite;
        \item if $X,Y$ are projective over $k$, then $X_S,Y_S$ are
            $S$-projective;
        \item if $X,Y$ are smooth {\lp resp. separable and geometrically normal {\rm
            \cite[\href{https://stacks.math.columbia.edu/tag/038L}{038L}]{stacks-project}}\rp} over $k$,  then $X_S,Y_S$ are
            $S$-smooth {\lp resp. $S$-separable and $S$-normal
            {\rm\cite[\href{https://stacks.math.columbia.edu/tag/038Z}{038Z}]{stacks-project}}\rp};
        \item if
            $\pi_1^{\et}(Y)\to\pi_1^{\et}(X)$ is surjective, then
            $\pi_1^{\et}(Y_s)\to\pi_1^{\et}(X_s)$ is surjective for all the
            geometric fibers $s$ of $S$;
        \item if $X,Y$ are smooth over $k$ and if
            $\pi_1^{\et}(Y)\to\pi_1^{\et}(X)$ is bijective, then
            $\pi_1^{\et}(Y_s)\to\pi_1^{\et}(X_s)$ is bijective for all the
            geometric fibers $s$ of $S$.
    \end{enumerate}
\end{thm}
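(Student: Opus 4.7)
The proof proceeds by a spreading-out argument: we construct $f_S$ over a smooth affine base $S \subseteq \Spec(k)$ and then successively shrink $S$ so that each listed property descends to all geometric fibers. Since $k$ is a filtered colimit of its finitely generated $\F_p$-subalgebras, the standard limit techniques \cite[\href{https://stacks.math.columbia.edu/tag/081F}{Tag 081F}]{stacks-project} produce an $A_0 \subseteq k$ of finite type over $\F_p$ and an $A_0$-morphism $f_{A_0}\colon Y_{A_0} \to X_{A_0}$ whose base change along $\Spec(k) \to \Spec(A_0)$ recovers $f$. After inverting a single nonzero element, we may assume $S = \Spec(A_0)$ is smooth over $\F_p$, affine, and integral.

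The statements (1)--(6) are formal consequences of the descent of morphism properties collected in \cite[\href{https://stacks.math.columbia.edu/tag/03YE}{Tag 03YE}]{stacks-project} and surrounding tags: each of flatness, properness, surjectivity, geometric reducedness and geometric connectedness of fibers, affineness, finiteness, projectivity, smoothness, normality, and separability descends to a dense open subset of $S$. Successively shrinking $S$ while keeping it smooth affine integral realizes (1)--(6) simultaneously.

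To handle (7) and (8) we employ the Frobenius factorization of Lemma \ref{Frobenius factorization}: for some $n \in \N^+$ there is a normal variety $Y'$ and finite purely inseparable maps $u\colon Y \to Y'$, $v\colon Y' \to Y$ with $v \circ u = F_Y^n$, together with a separable morphism $g\colon Y' \to X$ satisfying $f \circ v = F_X^n \circ g$. By Theorem \ref{Grothendieck}, the maps $u$, $v$, $F_Y^n$, and $F_X^n$ all induce isomorphisms on $\pi_1^{\et}$, so surjectivity (resp.\ bijectivity) of $\pi_1^{\et}(f)$ is equivalent to the corresponding property of $\pi_1^{\et}(g)$. We spread out the entire diagram to $S$, shrinking if necessary so that the analogous relations hold fiberwise, and invoke Lemma \ref{fiberwise separable} to arrange that $g_s\colon Y_s' \to X_s$ is separable on every geometric fiber. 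For (7), \cite[Exposé X, Thm.~3.8]{SGA1} applied to the proper separable surjection $g_s$ yields surjectivity of $\pi_1^{\et}(g_s)$, which transfers to $f_s$ through the fiberwise version of the factorization. For (8), we invoke Lemma \ref{fiberwise pi-iso} applied to $g_S\colon Y_S' \to X_S$; its hypotheses require $X_S$ to be $S$-smooth (granted by (6) since $X$ is smooth) and $Y_S'$ to be $S$-geometrically normal.

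The main technical obstacle is securing geometric normality of $Y_S'$ over $S$ for (8). Lemma \ref{Frobenius factorization} only guarantees that $Y'$ is normal over $k$, and when $k$ is imperfect this does not automatically imply geometric normality. To overcome this, the plan is either to enlarge the exponent $n$ so that the function field extension $K(Y')/K(X)$ becomes separable to a sufficient degree to force geometric normality of $Y'$, or to shrink $S$ further and exploit openness of normality in families together with the fact that normal equals geometrically normal over the algebraically closed residue field of each geometric point $s$ of $S$. Once this is secured, Lemma \ref{fiberwise pi-iso} applies to $g_S$ and yields the desired bijectivity of $\pi_1^{\et}(f_s)$ for every geometric fiber $s$, completing the proof.
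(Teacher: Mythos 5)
Your handling of (1)--(6) coincides with the paper's (both reduce to the standard descent-of-properties tags), and your strategy for (8) --- isolate a separable morphism by splitting off the purely inseparable part, then combine Theorem \ref{Grothendieck} with Lemma \ref{fiberwise pi-iso} --- has the same structure as the paper's argument. The paper factors $f$ as $Y\to Z\to X$ through the normalization $Z$ of $X$ in the separable closure of $K(X)$ inside $K(Y)$, whereas you use the Frobenius factorization of Lemma \ref{Frobenius factorization}; either way one obtains a separable morphism to $X$ whose purely inseparable companions are invisible to $\pi_1^{\et}$, so this difference is cosmetic. Your concern about geometric normality of $Y'_S$ over $S$ is legitimate (the paper simply asserts that $Z_S$ may be taken $S$-normal after shrinking), and your second proposed remedy --- openness of normality of fibers in a proper flat family together with the fact that geometric points have algebraically closed residue fields --- is the right one; but as written it remains a ``plan'' rather than an argument, and Lemma \ref{fiberwise pi-iso} genuinely requires this hypothesis, so it must be carried out.

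The genuine gap is in (7). You assert that \cite[Exposé X, Thm.~3.8]{SGA1} ``applied to the proper separable surjection $g_s$ yields surjectivity of $\pi_1^{\et}(g_s)$.'' This argument nowhere uses the hypothesis that $\pi_1^{\et}(Y)\to\pi_1^{\et}(X)$ is surjective, so if it were valid it would show that \emph{every} proper separable surjection of connected varieties induces a surjection on $\pi_1^{\et}$ --- which is false: a nontrivial connected finite étale cover $Y\to X$ is proper, separable and surjective, yet $\pi_1^{\et}(Y)\to\pi_1^{\et}(X)$ identifies $\pi_1^{\et}(Y)$ with a proper open subgroup. The correct route, and the one the paper takes, is specialization: since $Y_S\to S$ and $X_S\to S$ are proper with geometrically connected and reduced fibers, the specialization maps $\pi_1^{\et}(Y_{\bar\eta})\to\pi_1^{\et}(Y_s)$ and $\pi_1^{\et}(X_{\bar\eta})\to\pi_1^{\et}(X_s)$ are surjective; combining the first of these with the surjectivity of $\pi_1^{\et}(Y_{\bar\eta})\to\pi_1^{\et}(X_{\bar\eta})$ (which follows from the hypothesis by invariance of $\pi_1^{\et}$ of proper schemes under extension of algebraically closed fields) and the commutativity of the evident square forces $\pi_1^{\et}(Y_s)\to\pi_1^{\et}(X_s)$ to be surjective. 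No factorization of $f$ is needed for (7).
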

\begin{proof}
    (1)-(6) follow from
    \cite[\href{https://stacks.math.columbia.edu/tag/07SK}{07SK},
    \href{https://stacks.math.columbia.edu/tag/084V}{084V},
    \href{https://stacks.math.columbia.edu/tag/081A}{081A}]{stacks-project}.
    (7) follows readily from the specialization of the étale
    fundamental group \cite[Exposé X, Théorème 3.8]{SGA1} or \cite[\href{https://stacks.math.columbia.edu/tag/0BUQ}{0BUQ}]{stacks-project}.

    For (8), we analyze the function field extension
    $K(X)\subseteq K(Y)$ through the following steps. Let $K$ be an
    intermediate extension such that $K/K(X)$ is separable and $K(Y)/K$ is purely inseparable.
    Let $Z$ be the normalization of $X$ in $K$, giving a factorization: 
    \[
        Y\xrightarrow{\quad g\quad}Z\xrightarrow{\quad h\quad} X
    \]
    where $h$ is surjective separable and $g$ is finite surjective purely
    inseparable.
    After shrinking $S$, we obtain compatible models: $X_S,Y_S$ smooth proper
    geometrically connected; $Z_S$ proper $S$-normal; $h_S$ surjective
    separable; $g_S$
    finite surjective purely inseparable. For every
    geometric points $s$ of $S$, Theorem \ref{Grothendieck} implies
    that $\pi_1^{\et}(Z_s)\to\pi_1^{\et}(X_s)$ is an isomorphism, and  Lemma \ref{fiberwise
    pi-iso} gives that $\pi_1^{\et}(Y_s)\to\pi_1^{\et}(Z_s)$ is an isomorphism. Putting them together we conclude the proof.
\end{proof}

\subsection{Reconstructing the moduli}

After the spreading-out preparation, the arguments from \S \ref{k the closure of Fp} 
can be repeated almost verbatim, except that we now work with families of varieties 
(schemes over $S$) rather than with a single variety. Nevertheless, since $S$ is of finite type over $\F_p$, 
any such family can itself be regarded as a variety over $\F_p$.

Let $X$ be a connected projective variety over a field
$k=\bar{k}$ equipped with a $k$-rational point $\xi$. We choose a smooth
    $\F_p$-algbera $A\subseteq k$ so that $(X,\xi)$ admits a geometrically connected
    projective model $X_S\to S\coloneqq\Spec(A)$ and $\xi_S\colon S\to X_S$.
    Set $R_{X_S}\coloneqq R(X_S,\xi_S,rp_{\sO_{X}})$. Viewing
    $\Spec(k)$ as
    an $S$-scheme via $A\subseteq k$, we have
    $R_{X_S}\times_Sk=R_X$. For $r\in\N^+$, we denote $R_{X_S,r}\coloneqq
    R(X_S,\xi,rp_{\sO_{X_S}})$.
For an $n$-tuple $\vec{r}\coloneqq(r_1,\dots,r_n)$, set $
R_{X_S,\vec{r}}\coloneqq R_{X_S,r_1}\times\cdots\times R_{X_S,r_n}$.

\begin{lem}\label{Definition of the subscheme N, relative case}  Let $\Sigma\coloneqq\{(Q_i^{(1)},\cdots,Q_i^{(n)})\}_{i\in\N}$ be a collection of
            $k$-points of the moduli space
            $R_{X_S,\vec{r}}$ such that
            $F_X^*Q_{i+1}^{(1)}=Q_i^{(1)}$, $\cdots$, $F_X^*Q_{i+1}^{(n)}=Q_i^{(n)}$. There is a closed integral subscheme
            $\sN_S\subseteq R_{X_S,\vec{r}}$ such that 
    \begin{enumerate}[label=\textup{(\Alph*)}]
        \item $\sN_S(k)$ contains $(Q_i^{(1)},\cdots,Q_i^{(n)})$ for infinitely many $i\in\N$;
        \item $\Sigma\cap\sN_S(k)$ is dense in $\sN_S$; 
        \item $\sN_S$ contains a dense subset of $\bar{\F}_p$-points of the form
            $(Q_i^{(1)},\cdots,Q_i^{(n)})$  with
            $Q_i^{(1)}$, $\cdots$, $Q_i^{(n)}$ being periodic in $\sN_S(\bar{\F}_p)$.
    \end{enumerate}
\end{lem}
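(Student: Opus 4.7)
The strategy is to adapt the Noetherian closure argument of Lemma \ref{Definition of the subscheme N} to the relative setting, exploiting the crucial fact that since $A$ is finitely generated over $\F_p$, the representation space $R_{X_S,\vec{r}}$ is itself of finite type over $\F_p$. Thus although the points $(Q_i^{(1)},\dots,Q_i^{(n)})$ are $k$-rational, we may view them as (not necessarily closed) scheme-theoretic points of an $\F_p$-scheme and take Zariski closures inside this larger scheme.

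Concretely, for each $m \in \N$, I would set $R_m$ to be the set of images of the morphisms $\Spec(k)\to R_{X_S,\vec{r}}$ corresponding to $(Q_i^{(1)},\dots,Q_i^{(n)})$ for $i\geq m$, and let $\mathcal{Z}_{m,S}\subseteq R_{X_S,\vec{r}}$ denote the reduced Zariski closure of $R_m$. By noetherianity the descending chain $\mathcal{Z}_{0,S}\supseteq \mathcal{Z}_{1,S}\supseteq\cdots$ stabilizes, giving $\mathcal{Z}_S := \bigcap_m \mathcal{Z}_{m,S}$ in which $R_{n_0}$ is dense for $n_0\gg 0$. Since $F_X^*$ sends $R_{m+1}$ into $R_m$ on the nose, it induces a dominant rational self-map $F_X^*\colon \mathcal{Z}_S\dashrightarrow \mathcal{Z}_S$, verbatim as in Lemma \ref{Definition of the subscheme N}. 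Choosing an irreducible component $\mathcal{N}_S\subseteq \mathcal{Z}_S$ whose $k$-points contain infinitely many of the $(Q_i^{(1)},\dots,Q_i^{(n)})$ and on which some iterate $(F_X^*)^a$ restricts dominantly is pure combinatorial bookkeeping, because $F_X^*$ permutes the finitely many components. This yields (A). Property (B) is then obtained exactly as in the original proof, by shrinking away the other irreducible components.

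For (C), the decisive observation is that $\mathcal{N}_S$ is an integral scheme of finite type over $\F_p$ equipped with a dominant rational self-map $(F_X^*)^a$. Hence Hrushovski's theorem, in the form used in \cite[Lemma 3.6]{Sun19}, applies to $\mathcal{N}_S$ directly and produces a Zariski-dense subset of $\bar{\F}_p$-points that are periodic under $(F_X^*)^a$. The product structure of $R_{X_S,\vec{r}}$ together with the compatibility of the projections with Frobenius pullback ensures that such a periodic point is automatically of the form $(Q^{(1)},\dots,Q^{(n)})$ with each component $Q^{(j)}$ periodic under absolute Frobenius pullback in $R_{X_S,r_j}(\bar{\F}_p)$.

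The main subtlety I expect is bookkeeping rather than substance: one must carefully distinguish between $k$-points and $\bar{\F}_p$-points of $R_{X_S,\vec{r}}$, and verify that the Noetherian closure operation performed inside the $\F_p$-scheme $R_{X_S,\vec{r}}$ is compatible with the assertion in (A), namely that $(Q_i^{(1)},\dots,Q_i^{(n)})\in \mathcal{N}_S(k)$ in the sense that the morphism $\Spec(k)\to R_{X_S,\vec{r}}$ factors through $\mathcal{N}_S$. Once the $\F_p$-viewpoint is adopted, the rest of the argument is essentially identical to the absolute case, with Hrushovski's theorem providing (C) without any additional input.
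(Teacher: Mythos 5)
Your proposal is correct, but it takes a genuinely different route from the paper. The paper does not redo the Noetherian closure argument in the relative setting: it first applies Lemma \ref{Definition of the subscheme N} over $k$ to produce a closed integral subscheme $\sN\subseteq R_{X,\vec{r}}$ with a dominant rational self-map $(F_X^*)^a$, and then \emph{spreads out} $\sN$ (after modifying $S$) to a model $\sN_S\subseteq R_{X_S,\vec{r}}$; properties (A) and (B) are transferred by observing that $\Sigma\cap\sN(k)$, being dense in $\sN$, contains the generic point of $\sN_S$ in its closure, and this density also forces the model $(F_{X_S}^*)^a$ to be dominant, after which (C) follows from \cite[Lemma 3.6]{Sun19} exactly as in your argument. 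You instead run the descending-chain/closure construction directly inside $R_{X_S,\vec{r}}$ viewed as a finite-type $\F_p$-scheme, taking closures of the (non-closed) image points of the $k$-rational points. Both constructions land on the same kind of object -- an integral finite-type $\F_p$-scheme carrying a dominant rational Frobenius self-map with the $Q_i$'s dense -- so Hrushovski's theorem applies identically for (C). Your version buys a small simplification: since $\sN_S$ is produced as a closed subscheme of $R_{X_S,\vec{r}}$ from the start, there is no need to modify $S$ or to check that density and dominance survive the passage from the generic fibre to a model; the price is that you must re-verify the component-selection bookkeeping (one component containing infinitely many $Q_i$ and preserved by an iterate of $F^*$) at the arithmetic level rather than citing the absolute lemma as a black box. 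Your closing remark that a morphism from the reduced scheme $\Spec(k)$ factors through the reduced closed subscheme $\sN_S$ as soon as its image point lies in it correctly disposes of the only delicate point in (A).
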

\begin{proof} By Lemma \ref{Definition of the subscheme N}, we find a closed
    integral
    subscheme $\sN\subseteq R_{X_S,\vec{r}}$ verifying the conditions
    (A), (B), (C) of that Lemma. After modifying $S$, we find a closed
    integral subscheme $\sN_S$ together with a rational map $(F_{X_S}^*)^a\colon
            \sN_S\dashrightarrow \sN_S$ which serve as models of $\sN$ and
            $(F_{X}^*)^a$. Result (A) of Lemma \ref{Definition of the
            subscheme N} implies result (A) of this lemma.

            Since $\Sigma\cap \sN(k)$ is dense in $\sN$, its closure in $\sN_S$
            contains $\sN$ which contains the generic point of $\sN_S$,
            consequently, $\Sigma\cap
            \sN(k)=\Sigma\cap \sN_S(k)$ is dense in $\sN_S$, whence (B). This
            also shows that the rational map $(F_{X_S}^*)^a$ is dominant.
            Finally, (C) follows from \cite[Lemma 3.6]{Sun19}.
\end{proof} 

\subsection{Full faithfulness}
Let $f\colon Y\to X$ be a map of     connected
    projective varieties over an algebraically closed field $k$. We choose a smooth
    $\F_p$-algbera $A\subseteq k$,  so that $(X,\xi)$ (resp. $(Y,\xi')$)  admit a geometrically
    smooth connected
    projective model $(X_S,\xi_S)$ (resp. $(Y_S,\xi_S')$), and $f$ lifts to
    $f_S\colon Y_S\to X_S$,
    where $S\coloneqq \Spec(A)$. The map $f_S$ induces a
rational map $f_S^*\colon R_{X_S}\dashrightarrow
R_{Y_S}$, which will be regarded as an actual map by considering its maximal
open subset of definition.

\begin{lem}\label{Morphisms relative case}
   Suppose that $f$ induces a surjective map
   \(\pi^\et_1(Y,\xi')\twoheadrightarrow\pi_1^\et(X,\xi)\). Consider the
   following data:
   \begin{itemize}
       \item two
   sequences of $k$-rational points of $R_X$:
   $\{P_i\}_{i\in\N},\,\{Q_i\}_{i\in\N}$,
   where $P_i\coloneqq(E_i,\beta_i)$, $Q_i\coloneqq(F_i,\gamma_i)$ and
   $E_i$ \lp resp. $F_i$\rp is a rank $r_1$ \lp resp. $r_2$\rp vector bundle;
       \item  
   suppose that every $f^*P_i=(f^*E_i,f^*\beta_i)$ and
        $f^*Q_i=(f^*F_i,f^*\gamma_i)$ is a $k$-rational points in $R_Y$.
\end{itemize}
Suppose that there is a locally closed integral subscheme $\sN_S\subseteq
R_{X_S,r_1}\times_S R_{X_S,r_2}$ 
\begin{enumerate}[label={\rm(\arabic*)}]
    \item whose
$k$-rational points contain pairs $(P_i,Q_i)$ for infinitely many $i\in\N$;
    \item $\Set{(P_i,Q_i)|i\in\N}\cap\sN_S(k)$ is dense in $\sN_S$; 
    \item 
        $\Frob\cap\sN_S(\bar{\F}_p)$ is dense in $\sN_S$, where $\Frob\subseteq
        R_{X_S,r_1}(\bar{\F}_p)\times R_{X_S,r_2}(\bar{\F}_p)$ denotes the subset of $\bar{\F}_p$-points of the form $(P,Q)$ with $P,Q$
 being étale trivializable.
\end{enumerate}
Then there are infinitely many $i\in\N$
   such that the natural map 
   \[
       f_{E_i,F_i}^*\colon\Hom(E_i,F_i)\longrightarrow\Hom(f^*E_i,f^*F_i)
   \]
   is an isomorphism.
\end{lem}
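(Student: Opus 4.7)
The proof will adapt the argument of Theorem \ref{full faithfulness} to the relative setting over $S=\Spec(A)$, with the spreading-out machinery of Theorem \ref{descending properties} supplying the necessary fiberwise control on étale fundamental groups. Let $\sE_i^\univ$ denote the universal framed sheaf on $X_S\times_S R_{X_S,r_i}$, and define
\[
\sE\coloneqq (\id_{X_S}\times\pr_1)^*\sE_1^\univ|_{X_S\times_S\sN_S},\qquad \sF\coloneqq (\id_{X_S}\times\pr_2)^*\sE_2^\univ|_{X_S\times_S\sN_S}.
\]
The plan is to study the adjunction map
\[
\pr_*\HOM(\sE,\sF)\longrightarrow\pr_*(f_S\times\id)_*\HOM\bigl((f_S\times\id)^*\sE,(f_S\times\id)^*\sF\bigr)
\]
pushed forward to $\sN_S$, and to shrink $\sN_S$ to a dense open $\sU_S$ on which both sheaves are locally free and commute with base change. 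At any point $\sQ=((V,\beta),(W,\beta'))\in\sU_S$ lying over a geometric point $s$ of $S$, the fiber of this map is the canonical pullback $\Hom(V,W)\to\Hom(f_s^*V,f_s^*W)$.

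Next, I would invoke Theorem \ref{descending properties} (7), possibly shrinking $S$ further, to ensure that for every geometric point $s$ of $S$ the fiberwise morphism $f_s\colon Y_s\to X_s$ still induces a surjection $\pi_1^\et(Y_s)\twoheadrightarrow\pi_1^\et(X_s)$. For any $\sQ\in\Frob$, both $V$ and $W$ are étale trivializable bundles on $X_s$; since surjectivity of $\pi_1^\et$ makes the pullback of étale representations fully faithful, the canonical map above is an isomorphism. By Lemma \ref{local criterion of flatness} (1), the locus in $\sU_S$ where the adjunction is an isomorphism is open. Since it contains $\Frob\cap\sU_S(\bar{\F}_p)$, which is dense by inheritance of condition (3), after further shrinking I may assume the adjunction is an isomorphism on all of $\sU_S$.

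Finally, by inheritance of condition (2), the set $\{(P_i,Q_i)\}\cap\sU_S(k)$ is dense in $\sU_S$. If it were finite, $\sU_S$ would be a finite union of closed points; being open and dense in the integral scheme $\sN_S$, this would force $\sN_S$ itself to consist of a single closed point, at which condition (1) already guarantees it equals $(P_i,Q_i)$ for infinitely many $i$. In either case $(P_i,Q_i)\in\sU_S(k)$ for infinitely many $i$, and at each such $k$-point the fiber of the adjunction recovers exactly the canonical map $f^*_{E_i,F_i}\colon\Hom(E_i,F_i)\to\Hom(f^*E_i,f^*F_i)$, which is therefore an isomorphism.

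The main obstacle, and the feature that distinguishes the relative case from Theorem \ref{full faithfulness}, is the passage between $\bar{\F}_p$-points of $\sN_S$ (where condition (3) provides density and where Theorem \ref{descending properties} (7) yields the fiberwise surjectivity used to trigger the key isomorphism) and $k$-points (where the target conclusion about $f^*_{E_i,F_i}$ must ultimately be extracted). This bridging works because the locus where the adjunction is an isomorphism is open in the scheme-theoretic sense, and therefore certified by any dense set of points, regardless of the residue characteristic at which the density is witnessed.
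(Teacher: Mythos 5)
Your proposal is correct and follows essentially the same route as the paper's proof: the same universal sheaves and adjunction map over $\sN_S$, the same appeal to Theorem \ref{descending properties} (7) to get fiberwise surjectivity of $\pi_1^{\et}$ so that the map is an isomorphism at points of $\Frob$, the same openness argument via Lemma \ref{local criterion of flatness} (1), and the same density bookkeeping to transfer the isomorphism from $\bar{\F}_p$-points to the $k$-points $(P_i,Q_i)$. The only imprecision is in your degenerate case: a finite dense set of $k$-points forces $\sU_S\times_S k$ (not the $S$-scheme $\sU_S$ itself, whose generic point may well be a non-closed $k$-point) to be a finite union of closed points; the paper makes exactly this base change before drawing the same conclusion.
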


\begin{proof} The proof here is analogous to that of Theorem \ref{full
    faithfulness}. 
    Let $(\sE^\univ_i,\beta^\univ_i)$ be the universal object on
    $X_S\times_SR_{X_S,r_i}$, and let
\(
\pr_i\colon R_{X_S,r_1} \times_S R_{X_S,r_2}\to R_{X_S,r_i}
\) ($i=1,2$),
be the projections. We consider sheaves:
$\sE\coloneqq(\id_{X_S}\times\pr_1)^*\sE_1^\univ|_{X_S\times_S\sN_{S}}$ and
$\sF\coloneqq(\id_{X_S}\times\pr_2)^*\sE_2^\univ|_{X_S\times_S\sN_{S}}$; maps
\(
   Y_S\times_S\sN_S\xrightarrow{f_S\times\id_S} X_S\times_S\sN_S\xrightarrow{\pr}\sN_S
\).

Let $\sU_S\subseteq \sN_S$ be a dense open subset where 
\begin{itemize}
    \item both sheaves
\[\pr_*\HOM(\sE,\sF),\text{
}\pr_*(f\times\id)_*\HOM((f\times\id)^*\sE,(f\times\id)^*\sF)\]
commute with base change;
\item the adjunction map
\begin{equation} \label{Ziel map}
    \pr_*\HOM(\sE,\sF)\longrightarrow\pr_*(f\times\id)_*\HOM((f\times\id)^*\sE,(f\times\id)^*\sF)
\tag{$\star$}
\end{equation} is an isomorphism.
\end{itemize}
When $\sQ\coloneqq ((V,\beta),
 (W,\beta'))\in\Frob\cap\sU_S(\bar{\F}_p)$, 
    i.e. the bundles $V,W$ are étale trivializable, the surjectivity of \(\pi^\et_1(Y,\xi')\twoheadrightarrow\pi_1^\et(X,\xi)\)
    and Lemma \ref{descending properties} (7) imply that  the canonical map
 \begin{equation}\label{canonical map}
        f_{V,W}^*\colon
        \Hom(V,W)\longrightarrow\Hom(f^*V,f^*W)\tag{$*$}
    \end{equation}   which is \eqref{Ziel map} restricting to $\sQ$, is
    an isomorphism.
Lemma \ref{local criterion of flatness} (1) guarantees that $\sU_S\neq\emptyset$. 

 Properties (2), (3) on $\sN_S$ are inherited by $\sU_S$ since
 $\sU_S\subseteq \sN_S$ is dense open. For any subset $T\subseteq
 \sU_S(k)$, if $T$ is not dense in $\sU_S\times_Sk$, then its closure there can not
 contain the generic point, hence $T$ is not dense in $\sU_S$. If $\sU_S$ fails (1), then, by (2),
 $\sU_S\times_Sk$ must
 be
 a finite union of (closed) $k$-points implying $\sU_S(k)$ is a singleton.
 Then $\sN_S\times_Sk$ is an integral $k$-variety containing a dense open
 $\sU\times_Sk\cong\Spec(k)$. This implies that
 $\sN_S\times_Sk=\sU\times_Sk$ which
 contradicts our assumption that $\sN_S$ satisfies (1).

 \textbf{Conclusion}: for every point $\sQ\coloneqq ((V,\beta),
 (W,\beta'))\in\sU_S(k)$, the canonical map
\eqref{canonical map} is the pullback
of \eqref{Ziel map}, hence is itself an isomorphism.
\end{proof}

\begin{thm}\label{full faithfulness the general case} 
    Suppose $f$ induces a surjection
$\pi_1^{\NN,{\rm \acute{e}t}}(Y,\xi')\twoheadrightarrow\pi_1^{\NN,{\rm
\acute{e}t}}(X,\xi)$.  Assume that $X,Y$ are smooth and $k$ is perfect.
Then for any
$E=(E_i,\sigma_i)_{i\in\N},E'=(E_i',\sigma_i')_{i\in\N}\in\Fdiv(X)$, the
pullback map
\[f_{E,E'}^*:{\rm Hom}_{\Fdiv(X)}(E,E')\longrightarrow {\rm
Hom}_{\Fdiv(Y)}(f^*E,f^*E')\] is an isomorphism.
\end{thm}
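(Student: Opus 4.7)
The plan is to repeat the argument of Theorem~\ref{full faithfulness} in a relative (spread-out) setting, using the preparatory material developed in this section. As in the case $k=\bar{\F}_p$, injectivity of $f^*_{E,E'}$ is automatic from the faithfulness of any Tannakian pullback, so the entire issue is surjectivity.

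First I would reduce to the case that $k$ is algebraically closed. Since $\Fdiv(-)$ is compatible with base change along algebraic separable extensions (cf.~\cite[Appendix, Prop.~B3]{TZ2}) and the Nori-étale fundamental group satisfies the same base change by Proposition~\ref{Base change for Nori}, base-changing along $k\hookrightarrow \bar{k}$ preserves both the hypothesis (the surjection on $\pi_1^{\NN,\et}$) and the conclusion (fully faithfulness of pullback). With $k=\bar{k}$ one has $\pi_1^{\NN,\et}(X,\xi)=\pi_1^{\et}(X,\xi)$ and similarly for $Y$, so the hypothesis becomes a surjection on étale fundamental groups, which is the form the proof of Theorem~\ref{full faithfulness} uses.

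Next I would spread out. Pick a smooth $\F_p$-subalgebra $A\subseteq k$ and a model $f_S\colon Y_S\to X_S$ over $S=\Spec(A)$ as provided by Theorem~\ref{descending properties}; after shrinking $S$ we may assume $X_S,Y_S$ are smooth projective $S$-schemes and, by part~(7) of that theorem, that $\pi_1^{\et}(Y_s)\twoheadrightarrow\pi_1^{\et}(X_s)$ for every geometric point $s$ of $S$. Using Corollary~\ref{make F-divided sheaves points of the representation space} I may shift and interpret $E=(E_i,\beta_i)_{i\in\N}$ and $E'=(E_i',\gamma_i)_{i\in\N}$ as sequences of $k$-points $P_i,Q_i$ of representation spaces $R_{X,r_1}$ and $R_{X,r_2}$ with $F_X^*P_{i+1}=P_i$, $F_X^*Q_{i+1}=Q_i$. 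Applying the relative version Lemma~\ref{Definition of the subscheme N, relative case} to this collection produces a closed integral subscheme $\sN_S\subseteq R_{X_S,r_1}\times_S R_{X_S,r_2}$ whose $k$-points contain $(P_i,Q_i)$ for infinitely many $i$, in which these pairs are dense, and in which the periodic $\bar{\F}_p$-points are dense. These are exactly the three hypotheses of Lemma~\ref{Morphisms relative case}, where density of periodic points plays the role of the set $\Frob$ used in Theorem~\ref{full faithfulness}.

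Finally I would invoke Lemma~\ref{Morphisms relative case}: it produces infinitely many indices $i$ for which the canonical map $f^*_{E_i,E_i'}\colon\Hom(E_i,E_i')\to\Hom(f^*E_i,f^*E_i')$ is an isomorphism. Exactly as in the conclusion of Theorem~\ref{full faithfulness}, any morphism $\varphi\colon f^*E\to f^*E'$ in $\Fdiv(Y)$ descends, at each such index $i$, to a unique morphism $E_i\to E_i'$, and the compatibility with the $\sigma_i,\sigma_i'$ forces these descended morphisms to assemble (via the Frobenius isomorphisms) into a morphism $E\to E'$ in $\Fdiv(X)$ whose pullback is $\varphi$. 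This gives surjectivity and completes the proof.

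The main obstacle, compared with the $\bar{\F}_p$-case, is that the crucial density of periodic points — the input of Hrushovski's theorem — exists only over $\bar{\F}_p$, whereas we now argue over a generic fiber of $S$. This is exactly why the lemmas of this section are formulated over $S$ with the density hypothesis phrased over $\bar{\F}_p$-points of $\sN_S$: the spreading out lets one exploit $\bar{\F}_p$-density on the special fibers while transporting the conclusion (an isomorphism of $\HOM$-sheaves) back to the generic fiber via the local criterion of flatness (Lemma~\ref{local criterion of flatness}). Once the relative moduli $\sN_S$ and its generic fiber properties are set up correctly, the rest of the argument is a direct transposition of the $\bar{\F}_p$-case.
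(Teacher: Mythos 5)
Your proposal is correct and follows essentially the same route as the paper: reduce to $k=\bar{k}$ via Proposition~\ref{Base change for Nori} and the base-change property of $\Fdiv$, then rerun the proof of Theorem~\ref{full faithfulness} with Lemma~\ref{Definition of the subscheme N, relative case} in place of Lemma~\ref{Definition of the subscheme N} and with Lemma~\ref{Morphisms relative case} supplying the isomorphism on $\Hom$'s at infinitely many indices. The paper's own proof is exactly this two-line substitution, and your observation that density of periodic $\bar{\F}_p$-points feeds the $\Frob$-density hypothesis of Lemma~\ref{Morphisms relative case} (via Lemma~\ref{Lange-Stuhler}) is the correct glue between the two relative lemmas.
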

\begin{proof}
By Proposition \ref{Base change for Nori} and \cite[Appendix, Prop.~B3]{TZ2}, we may assume that $k=\bar{k}$, so $\pi_1^{\NN,\et}=\pi_1^\et$.

    Replacing Lemma \ref{Definition of the subscheme N} with Lemma \ref{Definition of the subscheme N,
    relative case} and using Lemma \ref{Morphisms
    relative case} in the proof of Theorem \ref{full faithfulness}, we immediately obtain the desired result. 
\end{proof}

\subsection{Essential surjectivity} 
\begin{lem}\label{object the general case}
   Suppose that $f_S\colon Y_S\to X_S$ is separable, surjective and induces an isomorphism
   $\pi^\et_1(Y_s,\xi_s')\xrightarrow{\cong} \pi_1^\et(X_s,\xi_s)$ for all
   geometric points $s$ of $S$. Let
   $\Sigma\coloneqq\{Q_i'\}_{i\in\N}\subseteq R_{Y_S,\vec{r}}$ be a sequence of rational points with $F_Y^*Q_{i+1}'=Q_i'$.  There
   is an integral locally closed subscheme $\sU'_S\subseteq R_{Y_S,\vec{r}}$ such that
   \begin{enumerate}[label={\rm(\arabic*)}]
       \item $\sU'_S(k)$ contains $Q_i'$ for infinitely many $i\in\N$;
       \item $\Sigma\cap \sU'_S(k)$ is dense in $\sU'_S$;
       \item $\Frob_Y\cap\sU'_S(\bar{\F}_p)$ is dense, where \[\Frob_Y\coloneqq\Set{Q'=(Q^{(1)},\cdots,Q^{(n)})\in
           R_{Y_S,\vec{r}}(\bar{\F}_p)|Q^{(i)}\text{ is étale trivializable}}\]
       \item ${f_S^*}^{-1}(\sU'_S)\to\sU'_S$ is finite purely inseparable and surjective. 
   \end{enumerate}
\end{lem}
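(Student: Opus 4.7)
The plan is to transpose the proof of Lemma~\ref{object} to the relative setting, substituting Lemma~\ref{Definition of the subscheme N, relative case} for Lemma~\ref{Definition of the subscheme N} and replacing the pointwise input from Corollary~\ref{cor3.2}(C) and Lemma~\ref{separable fiber unique} by fiberwise analyses over geometric points of $S$. First I would apply Lemma~\ref{Definition of the subscheme N, relative case} to the sequence $\Sigma$ to produce a closed integral subscheme $\sN_S' \subseteq R_{Y_S,\vec{r}}$ such that $\sN_S'(k)$ contains $Q_i'$ for infinitely many $i$, the set $\Sigma \cap \sN_S'(k)$ is dense in $\sN_S'$, and $\Frob_Y \cap \sN_S'(\bar{\F}_p)$ is dense in $\sN_S'$. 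Setting $\sN_S \coloneqq {f_S^*}^{-1}(\sN_S')$ produces a map $f_S^*\colon \sN_S \to \sN_S'$. For any point $Q'\in \Frob_Y\cap\sN_S'(\bar{\F}_p)$, restricting everything to the geometric fiber of $S$ through $Q'$ and invoking Corollary~\ref{cor3.2}(C) together with Lemma~\ref{separable fiber unique} shows that ${f_S^*}^{-1}(Q')$ consists of a single point. Combined with Corollary~\ref{cor3.2}(B), this also yields that $f_S^*$ is dominant.

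Armed with this singleton-fiber input, I would then run the Chevalley-shrinking argument of Lemma~\ref{object} verbatim. Since the locus of zero-dimensional fibers of $f_S^*$ is constructible (cf.~\cite[\href{https://stacks.math.columbia.edu/tag/05F9}{05F9}, \href{https://stacks.math.columbia.edu/tag/054E}{054E}]{stacks-project}) and contains the dense set $\Frob_Y\cap\sN_S'(\bar{\F}_p)$, it is dense, and \cite[\href{https://stacks.math.columbia.edu/tag/005K}{005K}]{stacks-project} provides a dense open $\sU_S'\subseteq \sN_S'$ over which $\sU_S\coloneqq{f_S^*}^{-1}(\sU_S')\to \sU_S'$ is quasi-finite. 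To upgrade to finiteness, I would shrink $\sU_S'$ further: first remove the closures of any generic points of $\sU_S$ not mapping to the generic point of $\sU_S'$; next, if two distinct generic points of $\sU_S$ lay above the generic point of $\sU_S'$, disjoint open neighborhoods would have images sharing a dense open by \cite[\href{https://stacks.math.columbia.edu/tag/005K}{005K}]{stacks-project}, hence a common periodic $\bar{\F}_p$-point whose fiber is a singleton, contradicting disjointness. After reducing $\sU_S$ (which preserves finiteness by \cite[\href{https://stacks.math.columbia.edu/tag/05YU}{05YU}]{stacks-project}) and applying \cite[Ch.~III, Ex.~3.7]{Hartshorne77}, the morphism $\sU_S\to \sU_S'$ is finite, hence closed and surjective; the dense subset of $\sU_S'(\bar{\F}_p)$ with singleton fibers then forces $K(\sU_S')\subseteq K(\sU_{S,\red})$ to be purely inseparable. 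Finally, properties (1), (2), (3) for $\sU_S'$ are inherited from the corresponding properties of $\sN_S'$ because $\sU_S'\subseteq \sN_S'$ is dense open.

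The main obstacle I anticipate lies in the fiberwise reduction at the very beginning: Corollary~\ref{cor3.2} is stated for a single morphism of varieties inducing an isomorphism on étale fundamental groups, but here it has to be invoked at each geometric fiber $f_s\colon Y_s\to X_s$ of $f_S$. This is precisely what Theorem~\ref{descending properties}~(8) makes legitimate, since the hypothesis of the lemma already supplies the $\pi_1^\et$-isomorphism on every geometric fiber of $S$; one only has to identify, for each $Q'\in\Frob_Y\cap\sN_S'(\bar{\F}_p)$ lying over a geometric point $s$, the fiber ${f_S^*}^{-1}(Q')$ with the corresponding fiber of ${f_s^*}^{-1}(Q')\subseteq R_{Y_s,\vec{r}}$, so that Lemma~\ref{separable fiber unique} applies on the nose. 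Once this identification is in place, every subsequent step is either formal scheme-theoretic manipulation or a direct quotation from the proof of Lemma~\ref{object}.
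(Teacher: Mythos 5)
Your proposal is correct and follows essentially the same route as the paper: both construct $\sN_S'$ via Lemma~\ref{Definition of the subscheme N, relative case}, use Corollary~\ref{cor3.2}(B),(C) together with Lemma~\ref{separable fiber unique} on the relevant geometric fibers to get dominance and singleton fibers over $\Frob_Y$, and then rerun the Chevalley/shrinking/finiteness argument of Lemma~\ref{object} verbatim. Your explicit remark about identifying ${f_S^*}^{-1}(Q')$ with the fiber of $f_s^*$ over the geometric point $s$ is exactly the point the paper leaves implicit.
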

\begin{proof} Let $\sN_S'\subseteq R_{Y_S,\vec{r}}$ be the closed integral subscheme
    obtained via Lemma \ref{Definition of the subscheme N, relative case}, and
    let $\sN_S\coloneqq {f_S^*}^{-1}(\sN_S')$.   By Corollary \ref{cor3.2} (B), the
    periodic $\bar{\F}_p$-points of $\sN_S'$ are mapped onto, so $f_S^*:\sN_S \to \sN_S'$
is a dominant map. 
By Corollary \ref{cor3.2} (C) and Lemma \ref{separable fiber unique}, for any
$\sQ\in\Frob_Y$, the fiber ${f_S^*}^{-1}(\sQ)$ is a singleton as a set. The fact
that $\sN_S'$ contains a dense subset of points whose fibers are singletons
allows us to run the same argument employed in the proof of Lemma \ref{object}:
\begin{itemize}
    \item Applying a
theorem of Chevalley
(cf.~\cite[\href{https://stacks.math.columbia.edu/tag/05F9}{05F9},
\href{https://stacks.math.columbia.edu/tag/054E}{054E}]{stacks-project}), we
find a dense open $\sU_S'\subseteq \sN_S'$ such that
$\sU_S\coloneqq {f_S^*}^{-1}(\sU_S')\to\sU_S'$ is quasi-finite;
    \item Applying \cite[Ch. III, Exercise 3.7]{Hartshorne77} we can shrink
        $\sU_S'$ so that
        $f_S|_{\sU_S}$ becomes a finite map between integral schemes.
\end{itemize}
Finally, one concludes that $f_S|_{\sU_S}$ is finite surjective and
        purely inseparable.
\end{proof}

\begin{lem}\label{stable bundles, general case}
    Suppose  $f_S\colon Y_S\to X_S$ satisfies: {\rm(1)} it is separable surjective;
    {\rm(2)} $X_S$ is $S$-normal and $Y_S$ is $S$-smooth; {\rm (3)} it
   induces an isomorphism on étale fundamental groups:
   $\pi^\et_1(Y_s,\xi_s')\xrightarrow{\cong} \pi_1^\et(X_s,\xi_s)$ for all
   geometric point
   $s$. For any $(E_i',\sigma_i')_{i\in\N}\in\Fdiv(Y)$ with $E_i'$ being $\mu$-stable, there exists an $F$-divided sheaf
            $(E_i,\sigma_i)_{i\in\N}\in\Fdiv(X)$ such that {\rm\lp 1\rp}
            $f^*E_i\cong
            E_i'$;
            {\rm\lp 2\rp} each $E_i$ is again $\mu$-stable with vanishing Chern class.
\end{lem}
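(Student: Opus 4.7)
The plan is to transplant the argument of Lemma~\ref{stable bundles} into the relative $S$-setting, replacing the absolute moduli space $R_Y$ with its $S$-model $R_{Y_S}$ and invoking Lemma~\ref{object the general case} in place of Lemma~\ref{object}. Since $Y_S$ is $S$-smooth, the generic fiber $Y$ is smooth over $k$, so Corollary~\ref{make F-divided sheaves points of the representation space} applies: after an inconsequential shift we obtain compatible framings $\beta_i'\colon(\xi')^*E_i'\cong\sO_k^{\oplus r}$ such that $Q_i'\coloneqq(E_i',\beta_i')$ is a $k$-rational point of the $S$-moduli space $R_{Y_S,r}$ satisfying $F_Y^*Q_{i+1}'=Q_i'$. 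This puts us in a position to apply Lemma~\ref{object the general case} to the collection $\Sigma\coloneqq\{Q_i'\}_{i\in\N}$.

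Lemma~\ref{object the general case} yields a locally closed integral subscheme $\sU_S'\subseteq R_{Y_S,r}$ such that $\sU_S'(k)$ contains $Q_i'$ for all $i$ in some infinite subset $T\subseteq\N$, and such that the morphism $\sU_S\coloneqq{f_S^*}^{-1}(\sU_S')\to\sU_S'$ is finite, purely inseparable, and surjective. Because $k$ is perfect, a finite purely inseparable surjection induces a bijection on $k$-points, so $\sU_S(k)\to\sU_S'(k)$ is a bijection. For each $i\in T$, let $Q_i=(E_i,\beta_i)\in\sU_S(k)$ be the unique lift of $Q_i'$; by construction $f^*E_i\cong E_i'$.

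The key step is showing $(F_X^*)^{j-i}Q_j=Q_i$ for $j>i$ in $T$. Indeed, the identity $f^*\bigl((F_X^*)^{j-i}E_j\bigr)\cong(F_Y^*)^{j-i}f^*E_j\cong(F_Y^*)^{j-i}E_j'\cong E_i'$ shows the pullback is $\mu$-stable, and since $f$ is surjective this forces $(F_X^*)^{j-i}E_j$ to be $\mu$-stable (a destabilizing subsheaf would pull back to one of $E_i'$ by the projection formula for slopes), hence $p$-stable and a valid point of $R_X$. Combined with the compatibility with $f^*$, this places $(F_X^*)^{j-i}Q_j$ in $\sU_S(k)$, and the bijection above yields the desired equality. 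Therefore the sequence $\{E_i\}_{i\in T}$ extends uniquely — by setting $E_i\coloneqq(F_X^*)^{j-i}E_j$ for $i\notin T$ with $j\in T$, $j>i$ — to an object $(E_i,\sigma_i)_{i\in\N}\in\Fdiv(X)$ with $f^*E_i\cong E_i'$. Each $E_i$ is $\mu$-stable by the same descent argument, and their Chern classes vanish by the projection formula \cite[\href{https://stacks.math.columbia.edu/tag/02U9}{02U9}]{stacks-project}.

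The main obstacle is ensuring that the bijection $\sU_S(k)\to\sU_S'(k)$ is available, since the relative moduli spaces live naturally over $S$ rather than over $k$, and the $k$-points we care about are those of the generic fiber of $S$. This is precisely why Lemma~\ref{object the general case} was formulated to produce a \emph{finite purely inseparable surjective} map between the relative models — the finiteness and purely inseparable nature on the level of $S$-schemes restrict to the analogous property on $k$-points, and perfectness of $k$ then gives the required bijection. Once this rigidifying mechanism is in place, the remainder is a routine transcription of the proof of Lemma~\ref{stable bundles}.
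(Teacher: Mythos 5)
Your proposal is correct and takes essentially the same route as the paper: the paper's own proof of this lemma simply says to rerun the argument of Lemma~\ref{stable bundles} with Lemma~\ref{object the general case} in place of Lemma~\ref{object}, which is exactly what you do. The extra details you supply (perfectness of $k$ giving the bijection on $k$-points of the finite purely inseparable cover, and descent of $\mu$-stability along the surjective $f$ to place $(F_X^*)^{j-i}Q_j$ back in the moduli space) are precisely the steps implicit in the paper's version.
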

\begin{proof}
The smoothness of $Y$ guarantees (cf.~Corollary \ref{make
    F-divided sheaves points of the representation space}) the existence of a collection $\Sigma\coloneqq\{Q_i'\coloneqq (E_i',\beta_i')\}_{i\in\N}$ of
            $k$-points of the moduli space
            $R_{Y_S}$ satisfying: {\rm\lp 1\rp} $F_{Y}^*Q_{i+1}'=Q_i'$; {\rm\lp 2\rp} each $E_i'$ is  $\mu$-stable
              with vanishing Chern class.

    Replacing Lemma \ref{object} with Lemma \ref{object the general case}, the
    proof is almost identical to the proof of Lemma \ref{stable bundles}.
\end{proof}

\begin{lem}\label{separable for general bundles, general case}
    Let $f: Y \to X$ be a separable surjective morphism where: {\rm (1)} $X,Y$
    are
    smooth;  {\rm (2)} $f$ induces an isomorphism
    $\pi^\et_1(Y,\xi')\xrightarrow{\cong} \pi_1^\et(X,\xi)$. Given an
    $F$-divided sheaf $E'=(E_i',\sigma_i)_{i\in\N}\in\Fdiv(Y)$, there exists an $F$-divided sheaf
            $E\coloneqq(E_i,\sigma_i)_{i\in\N}\in\Fdiv(X)$ with $f^*E\cong E'$.
\end{lem}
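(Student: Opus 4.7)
The strategy mirrors Lemma \ref{separable for general bundles}, with every appeal to a moduli construction over a fixed variety replaced by its spreading-out counterpart developed in this section. First, by Proposition \ref{Base change for Nori} and \cite[Appendix, Prop.~B3]{TZ2}, we may reduce to the algebraically closed case $k=\bar k$, so that $\pi_1^{\NN,\et}=\pi_1^\et$. Applying Theorem \ref{descending properties}, we spread out to obtain a smooth affine $\F_p$-algebra $A\subseteq k$ with $S\coloneqq\Spec A$, together with a separable surjective $S$-morphism $f_S\colon Y_S\to X_S$ between smooth projective $S$-schemes whose base change to $k$ is $f$, and such that $\pi_1^\et(Y_s)\to\pi_1^\et(X_s)$ is an isomorphism for every geometric point $s$ of $S$.

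By Theorem \ref{EM10 main lemma}(2), after a harmless Frobenius shift $E'$ becomes a successive extension of $F$-divided sheaves with $\mu$-stable graded pieces of vanishing Chern classes. Each such stable piece already descends to $\Fdiv(X)$ by Lemma \ref{stable bundles, general case}. We then induct on the length of the filtration, reducing to the case of a single extension $0\to V'\to E'\to U'\to 0$ in $\Fdiv(Y)$, where $V',U'$ descend to $V,U\in\Fdiv(X)$. Using Corollary \ref{make F-divided sheaves points of the representation space}, we realize each $E'_i$ as a $k$-point $Q'_i=(E'_i,\beta'_i)$ of $R_{Y_S}$, and apply Lemma \ref{object the general case} to the sequence $\{((V'_i,\beta_{V'_i}),\,Q'_i,\,(U'_i,\beta_{U'_i}))\}_{i\in\N}$ inside $R_{Y_S,\vec r}$ with $\vec r=(r_1,r_2,r_3)$. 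This produces a locally closed integral subscheme $\sU'_S\subseteq R_{Y_S,\vec r}$ containing infinitely many of these tuples, with $\Frob_Y\cap\sU'_S(\bar\F_p)$ dense and $\sU_S\coloneqq{f_S^*}^{-1}(\sU'_S)\to\sU'_S$ finite, purely inseparable, and surjective.

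Following the proof of Lemma \ref{separable for general bundles}, we shrink $\sU_S$ so that the $\HOM$ sheaves of the universal objects $\sF_1,\sF_2,\sF_3$ and their $f_S$-pullbacks are locally free, commute with base change, and the two adjunction morphisms (between $\pr_*\HOM(\sF_1,\sF_2)$, $\pr_*\HOM(\sF_2,\sF_3)$ and their $Y_S$-counterparts) are isomorphisms. At any $\sQ\in\Frob_Y\cap\sU_S(\bar\F_p)$ all three bundles are étale trivializable, and Corollary \ref{cor3.2} together with the fiberwise isomorphism $\pi_1^\et(Y_s)\xrightarrow{\cong}\pi_1^\et(X_s)$ ensures that the fiber of each adjunction is an isomorphism. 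Since the isomorphism locus is open by Lemma \ref{local criterion of flatness}(1) and contains a dense subset of $\bar\F_p$-points, after a further shrinking we may assume it is all of $\sU_S$. For infinitely many $i$, the exact sequences $0\to V'_i\to E'_i\to U'_i\to 0$ then descend through the bijection $\sU_S(k)\to\sU'_S(k)$ to exact sequences $0\to V_i\to E_i\to U_i\to 0$ on $X$. Lemma \ref{stable bundles, general case} applied to $U$ and $V$ gives ${F_X^{j-i}}^*U_j\cong U_i$ and ${F_X^{j-i}}^*V_j\cong V_i$, hence ${F_X^{j-i}}^*E_j$ is $p$-semistable with reduced Hilbert polynomial $p_{\sO_X}$. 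Thus ${F_X^{j-i}}^*Q_j\in R_X(k)$; being a lift of $Q_i$ through the purely inseparable map ${f_S^*}^{-1}(\sU'_S)\to\sU'_S$, it must coincide with $Q_i$. An application of \cite[Prop.~1.7]{Gi75} assembles $\{E_i\}$ into an $F$-divided sheaf $E\in\Fdiv(X)$ with $f^*E\cong E'$, completing the induction on rank.

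The heart of the argument—and its main obstacle—is not the individual lifting of stable pieces, which is cleanly handled by Lemma \ref{stable bundles, general case}, but ensuring that the extensions between them descend \emph{simultaneously} at infinitely many indices $i$, so that the descended bundles are themselves $F$-Frobenius compatible. Spreading out to $S$ is what makes Corollary \ref{cor3.2} applicable on the dense periodic set $\Frob_Y\cap\sU_S(\bar\F_p)$; from there the local criterion of flatness propagates the required $\mathrm{Ext}$-descent throughout $\sU_S$, which is precisely the mechanism that forces the Frobenius pullbacks to remain inside the purely inseparable fiber of $f_S^*$ and hence to coincide with the chosen lifts.
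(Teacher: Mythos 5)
Your proposal is correct and follows essentially the same route as the paper: reduce to a single extension of $\mu$-stable graded pieces, realize the data as points of the relative moduli space $R_{Y_S,\vec r}$, invoke Lemma \ref{object the general case} to get the purely inseparable lifting locus, propagate the descent of the extension sequences from the dense set of étale-trivializable $\bar{\F}_p$-points via the local criterion of flatness, use Lemma \ref{stable bundles, general case} to force $p$-semistability of the Frobenius pullbacks and hence uniqueness of the lifts, and conclude with \cite[Prop.~1.7]{Gi75}. The only cosmetic difference is that you unpack the Hom-sheaf adjunction argument inline where the paper simply cites Lemma \ref{Morphisms relative case}, and you perform the reduction to $k=\bar k$ here rather than deferring it to the final theorem.
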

\begin{proof} The proof here is essentially the same as that of Lemma \ref{separable for general bundles}:
\begin{itemize}
    \item The smoothness of $Y$ guarantees (cf.~Theorem~\ref{EM10 main lemma}) the existence of a filtration whose graded pieces are of the form
    $U'=(U_i',\theta_i')_{i\in\N}$ with: {\rm(1)} each
    $U_i'$ is $\mu$-stable; {\rm(2)} all Chern class vanish.
    By Corollary \ref{make F-divided sheaves points of the representation
    space}, we get a collection $\Sigma\coloneqq\{Q_i'\coloneqq (E_i',\beta_i')\}_{i\in\N}$ of
            $k$-points of the moduli space
            $R_Y$ satisfying $F_Y^*Q_{i+1}'=Q_i'$.

    \item Using Lemma \ref{object the general case}, we find a sequence of sheaves $\{E_i\}_{i\in S}$ satisfying $f^*E_i\cong
    E_i'$;
    \item Using Lemma \ref{stable bundles, general case}, the stable quotients from the filtration descend to $F$-divided sheaves on $X$;
    \item By Lemma \ref{Morphisms relative case} and Lemma \ref{local criterion
        of flatness}, each $E_i$ is a successive extenison of $\mu$-stable bundles whose Frobenius pullbacks are still $\mu$-stable, so the Frobenius pullbacks of $E_i$ are $p$-semistable;
    \item The sequence of bundles thus form an $F$-divided sheaf $E\coloneqq(E_i,\sigma_i)_{i\in\N}$ by the uniqueness of the lift via Lemma \ref{object the general case}.
    \item Finally, we conclude $f^*E=E'$ via \cite[Prop. 1.7]{Gi75}.
\end{itemize}
\end{proof}

\begin{thm}\label{separable for general bundles over general fields final version}
   Let $f: Y \to X$ be a surjective morphism of smooth varieties over a
   perfect field, inducing an isomorphism of Nori-étale fundamental groups:
   $\pi^{\NN,\et}_1(Y,\xi')\xrightarrow{\cong} \pi_1^{\NN,\et}(X,\xi)$. Then
   for any $F$-divided sheaf $E' \in \Fdiv(Y)$, there exists an $F$-divided
   sheaf $E \in \Fdiv(X)$ with an isomorphism: $F^*E\cong E'$ in $\Fdiv(Y)$.
\end{thm}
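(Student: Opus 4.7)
The plan is to mirror the strategy of Theorem \ref{separable for general bundles final version}, combining a base-change reduction to the algebraically closed case with the Frobenius factorization of Lemma \ref{Frobenius factorization}, and then invoking the separable essential-surjectivity result Lemma \ref{separable for general bundles, general case}.

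First, I would reduce to the situation $k=\bar k$. Since $k$ is perfect, $\bar k/k$ is a separable algebraic extension, so Proposition \ref{Base change for Nori} guarantees that the hypothesis ``$\pi(f)$ is an isomorphism of Nori-étale fundamental groups'' is preserved under the base change $X_{\bar k},Y_{\bar k}$. By \cite[Appendix, Prop.~B3]{TZ2}, $\Fdiv(-)\otimes_k\bar k$ identifies $\Fdiv$ of the base change with the base change of $\Fdiv$, so producing a preimage $E\in\Fdiv(X_{\bar k})$ is equivalent to producing one on $X$. After this reduction, $\pi_1^{\NN,\et}=\pi_1^{\et}$ and the hypothesis reads as an isomorphism of ordinary étale fundamental groups.

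Next, I would apply Lemma \ref{Frobenius factorization} to obtain a factorization $F_Y^n\colon Y\xrightarrow{\;u\;}Y'\xrightarrow{\;v\;}Y$, where $Y'$ is a normal variety separable over $X$ and $u,v$ are finite purely inseparable. By Theorem \ref{Grothendieck}, $u$ (and hence $v$) induces an isomorphism on $\pi_1^{\et}$, so $f\circ v\colon Y'\to X$ is a separable surjective morphism still inducing an isomorphism on étale fundamental groups. Proposition \ref{purely inseparable maps and Fdiv} gives an equivalence $u^*\colon\Fdiv(Y')\xrightarrow{\simeq}\Fdiv(Y)$, so $E'$ descends uniquely to some $E''\in\Fdiv(Y')$ with $u^*E''\cong E'$. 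It then suffices to produce $E\in\Fdiv(X)$ with $(f\circ v)^*E\cong E''$, since then $f^*E\cong u^*(f\circ v)^*E\cong u^*E''\cong E'$.

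For this final step, I would apply Lemma \ref{separable for general bundles, general case} to $f\circ v\colon Y'\to X$. The main obstacle is that $Y'$ is only normal, not smooth, whereas that lemma is stated under the hypothesis that the source is smooth. However, as in the corresponding step of the proof of Theorem \ref{separable for general bundles final version}, this is inessential: the only place smoothness of the source was used was in invoking Theorem \ref{EM10 main lemma} to filter $E''$ by $F$-divided subquotients with $\mu$-stable components of reduced Hilbert polynomial $p_{\sO_{Y'}}$ and thereby realize $E''$ as a sequence of points in the representation space $R_{Y'}$. This input transfers to $Y'$ from the smooth variety $Y$: the filtration of $E'$ produced by Theorem \ref{EM10 main lemma} descends via the equivalence $u^*$ to a filtration of $E''$, and by the projection formula for Chern classes (cf.~\cite[\href{https://stacks.math.columbia.edu/tag/02U9}{02U9}]{stacks-project}) each graded piece remains $\mu$-stable with vanishing Chern classes on $Y'$, hence is $p$-semistable with the required reduced Hilbert polynomial. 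The rest of the argument of Lemma \ref{separable for general bundles, general case}—the construction of the auxiliary locally closed subscheme $\sU_S'$, the uniqueness of lifts, and the extension-by-extension assembly via Lemma \ref{Morphisms relative case} and Lemma \ref{local criterion of flatness}—uses only the smoothness of the target $X$ and the normality of the source, and so applies verbatim.
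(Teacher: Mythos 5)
Your proposal follows the paper's proof essentially step by step: reduce to $k=\bar k$ via Proposition \ref{Base change for Nori} and \cite[Appendix, Prop.~B3]{TZ2}, factor through the normal variety $Y'$ of Lemma \ref{Frobenius factorization}, and then run Lemma \ref{separable for general bundles, general case}, observing that smoothness of the source is needed only to produce the filtration with $\mu$-stable graded pieces, which descends from the smooth $Y$ along the equivalence $u^*$ together with the projection formula for Chern classes. The one inaccuracy is the claim that $f\circ v$ is separable: by the construction in \eqref{Relative Frobenius} it equals $F_X^n\circ g$ with $g\colon Y'\to X$ the genuinely separable projection, so your chain $f^*E\cong u^*(f\circ v)^*E$ in fact yields $f^*(E(n))\cong E'$; since the shift $E\mapsto E(n)$ is an autoequivalence of $\Fdiv(X)$ this is harmless, and it is exactly the gloss the paper itself makes when it says ``replacing $Y$ by $Y'$ we can assume $Y$ is separable over $X$.''
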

\begin{proof} By Proposition \ref{Base change for Nori} and \cite[Appendix, Prop.~B3]{TZ2}, we may assume that $k=\bar{k}$, so $\pi_1^{\NN,\et}=\pi_1^\et$.
    The proof is then the same as that of Theorem \ref{separable for general bundles final version}:
    \begin{itemize}
        \item First write $E'$ (or its some shift) as a successive extension of $F$-divided sheaves with stable components;
        \item Using Lemma \ref{Frobenius factorization} reduce the problem to the case when $f$ is separable surjective. But pay attention to the fact that this reduction removes the smoothness assumption on $Y$ and replaced it by the normality;
        \item Finally, we apply the proof of Lemma \ref{separable for general bundles, general case} to conclude.
    \end{itemize}
\end{proof}

\section*{Acknowledgements} 
We would like to thank Hélène Esnault, Xiaowen Hu, Adrian Langer, Vasudevan
Srinivas, Lei
Zhang (USTC) and Kang Zuo for very helpful discussions.

\printbibliography


\end{document}